\documentclass[11pt]{amsart} \textwidth=14.5cm \oddsidemargin=1cm
\usepackage{amsmath,wasysym}
\usepackage{amsxtra}
\usepackage{amscd}
\usepackage{amsthm}
\usepackage{amsfonts}
\usepackage{amssymb}
\usepackage{eucal}
\usepackage{graphics, color}
\usepackage{hyperref,mathrsfs}
\usepackage[usenames,dvipsnames]{xcolor}
\usepackage{tikz}
\usepackage{stmaryrd}
\usepackage[all]{xy}
\usepackage{hyperref}
\usepackage{color}
\usepackage{bbm} 
\usepackage{tikz}
\usetikzlibrary{arrows.meta}
\usetikzlibrary{arrows.meta,positioning}
\allowdisplaybreaks

\hypersetup{colorlinks,linkcolor=blue, urlcolor=blue,
citecolor=blue}

\newtheorem{thm}{Theorem} [section]
\newtheorem{lem}[thm]{Lemma}

\newtheorem{prop}[thm]{Proposition}

\theoremstyle{definition}
\newtheorem{definition}[thm]{Definition}

\newtheorem{rem}[thm]{Remark}

\numberwithin{equation}{section}

\newcommand{\C}{{\mathbb C}}

\newcommand{\End}{{\mathrm{End}}}
\newcommand{\F}{\mathbb{F}}

\newcommand{\h}{{\mathfrak h}}

\newcommand{\Z}{{\mathbb Z}}

\newcommand{\nc}{\newcommand}
\nc{\browntext}[1]{\textcolor{brown}{#1}}
\nc{\greentext}[1]{\textcolor{green}{#1}}
\nc{\redtext}[1]{\textcolor{red}{#1}}
\nc{\bluetext}[1]{\textcolor{blue}{#1}}
\nc{\brown}[1]{\browntext{ #1}}
\nc{\green}[1]{\greentext{ #1}}
\nc{\red}[1]{\redtext{ #1}}
\nc{\blue}[1]{\bluetext{ #1}}

\title[Schur--Weyl duality for coideal subalgebras of quantum affine superalgebras of type $A$]{Schur--Weyl duality for coideal subalgebras of quantum affine superalgebras of type $A$}

\author{Weideng Cui}
\address{School of Mathematics, Shandong University, Jinan, Shandong 250100, China}
\email{cwdeng@amss.ac.cn}

\keywords{Schur--Weyl duality, quantum affine superalgebras, $\jmath/\imath$-quantum affine superalgebras, affine $\jmath/\imath$-Schur superalgebras}
\subjclass[2010]{Primary 17B10}

\begin{document}

\begin{abstract}
We extend the results of \cite{FLLLWW20} to the super setting. More precisely, we define two families of coideal sub-superalgebras of quantum affine superalgebras of type $A$, called $\jmath$- and $\imath$-quantum affine superalgebras respectively, and establish Schur--Weyl duality between them and affine Hecke algebras of type $C$.
\end{abstract}

\maketitle
\setcounter{tocdepth}{1}
\tableofcontents

\section{Introduction}
Schur--Weyl duality is a fundamental theme in representation theory. In its classical form, it describes the double centralizer relationship between the actions of a general linear group and a symmetric group on some tensor space. Its quantum counterpart replaces these algebras with quantized enveloping algebras and Hecke algebras of type $A$; cf. \cite{J86}. Affine versions of this correspondence provide an important connection between quantum affine algebras of type $A$, affine Hecke algebras of type $A$, and affine $q$-Schur algebras; see [VV99, DDF12]. Analogous duality has also been established for orthogonal groups or symplectic groups and Brauer algebras; cf. \cite{Br37}. An essential feature of these dualities is that the tensor space carries the commuting actions of two algebras, and under suitable hypotheses, each algebra forms the full centralizer of the other.

Representation theory of Lie superalgebras was strongly motivated by applications in physics and in topology. In recent years Schur--Weyl duality and categorification have shed light on the interaction of representation theory of Lie superalgebras with other branches of mathematics; we refer to \cite{Se14} for a survey. The quantum deformations of universal enveloping algebras of simple Lie superalgebras, together with their affine versions, have been defined in terms of generators and relations by Yamane; cf. \cite{Yam94} and \cite{Yam99}. They have been studied from various viewpoints; cf. \cite{BKK00}, \cite{Z14}, among others.

An important development in this direction is the study of Schur dualities associated with Lie superalgebras. Many classical Schur dualities admit super analogues. For example, Schur's work has been extended to general linear Lie superalgebras in \cite{Ser84} and \cite{BR87}, Jimbo's work has been generalized to quantized enveloping superalgebras in \cite{Mo03} and \cite{Mi06}, Brauer's work has been expanded to ortho-symplectic Lie superalgebras in \cite{ES16}.

Another important development in this direction is the study of Schur dualities associated with quantum symmetric pairs; cf. \cite{W23}. In this setting, the role of quantized enveloping algebras is played by a distinguished coideal subalgebra, commonly called an $\imath$-quantum group. In particular, $\imath$-quantum groups of type AIII give rise to analogues of the usual Schur--Weyl duality involving Hecke algebras of type $B/C$, called $\imath$Schur dualities; cf. \cite{BW18} and \cite{BWW18}. Affine $\imath$Schur dualities, between coideal subalgebras of quantum affine algebras of type $A$ and affine Hecke algebras of type $C$, were systematically developed in \cite{FLLLW20} and \cite{FLLLW23} by a geometric approach and an algebraic approach, respectively. A three-parameter version was established in \cite{FLLLWW20}. Besides, the author and Shen \cite{CS24} have formulated $\imath$Schur dualities between the $q$-Brauer algebra and the $\imath$-quantum groups of type AI and AII, respectively.

Building on these developments, Shen \cite{S25} has generalized the results of \cite{BWW18} to the super setting; that is, he established an $\imath$Schur duality between an $\imath$-quantum supergroup associated to a super Satake diagram with black nodes and a multi-parameter Hecke algebra of type $B/C$; see also \cite{CL23} for the case of no black nodes. Later on, Shen and Wang \cite{SW25} have defined $\imath$-quantum supergroups associated to quantum supersymmetric pairs of basic types, and formulated an $\imath$Schur duality between a distinguished class of $\imath$-quantum supergroups and $q$-Brauer algebras.

The purpose of this paper is to carry out this strategy further. More precisely, we shall extend the three-parameter affine type $C$ $\imath$Schur duality of \cite{FLLLWW20} to the super setting. The super case is not obtained merely by changing the underlying vector space: the $\mathbb Z_2$-grading affects the defining relations of the quantum affine superalgebra, the tensor product action through the Koszul sign rule, and the affine Hecke algebra action on tensor superspaces. In particular, isotropic odd simple roots give rise to additional quartic Serre relations, while the Hecke algebra action must account for the Koszul signs associated with permutations of homogeneous tensor factors.

We describe the main results in more detail. Based on our previous work \cite{CS26}, we first define two families of coideal sub-superalgebras of quantum affine superalgebras of type $A$, called $\jmath$- and $\imath$-quantum affine superalgebras, respectively, and establish their explicit presentations in terms of generators and relations; cf. Theorems \ref{def:ipreseab} and \ref{def:ipreseabslmadada}. Next we construct an action on an infinite-dimensional tensor superspace of the affine Hecke algebra of type $C$ with three parameters, and prove the commutativity theorem between it and a distinguished class of $\jmath/\imath$-quantum affine superalgebras under suitable specialization of their parameters; cf. Theorems \ref{hatcuth} and \ref{icommu}. We further introduce the corresponding three-parameter affine $\jmath$- and $\imath$-Schur superalgebras as endomorphism algebras of direct sums of signed permutation modules over the affine Hecke algebra of type $C$. By identifying the direct sums of these modules with the relevant tensor superspaces, we obtain algebra homomorphisms from $\jmath/\imath$-quantum affine superalgebras to affine $\jmath/\imath$-Schur superalgebras. Finally, we prove that these homomorphisms are surjective and establish the desired double centralizer properties; cf. Theorems \ref{schruimo} and \ref{suchuri}.

Although most of the ideas follow from \cite{FLLLWW20}, we expect these two families of coideal sub-superalgebras to provide a framework for further investigations. Moreover, several directions for further research arise naturally: (1) exhibit a Drinfeld-type presentation of these superalgebras and develop their representation theory; (2) define and study the coideal sub-superalgebras of quantum affine superalgebras of other types, in particular ortho-symplectic types (cf. \cite{MS26}), and moreover, generalize the results of \cite{SW25} to the affine types; (3) study the two classes of affine $\jmath/\imath$-Schur superalgebras. We shall focus on these topics in future work.


The paper is organized as follows. In Section 2, we recall the relevant quantum affine superalgebras, construct the first family of $\jmath$-quantum affine superalgebras, and establish their presentations and coideal properties. In Section 3, we define the affine type $C$ Hecke algebra action on a tensor superspace and prove the commutativity theorem. In Section 4, we introduce the affine $\jmath$-Schur superalgebras and prove the Schur--Weyl duality theorem. In Section 5, we develop the odd-rank counterpart and establish the second Schur--Weyl duality.

\noindent {\bf The use of AI.} The paper contains numerous calculations; we have verified their validity by hand, and also used ChatGPT 5.6 to assist in reviewing intermediate algebraic manipulations.







\section{The $\jmath$-quantum affine superalgebras}\label{def:quanaffis}
In this section, we introduce a coideal sub-superalgebra of the quantum affine superalgebra of type $A$, called a $\jmath$-quantum affine superalgebra, and moreover, give its explicit presentation in terms of generators and relations.

Throughout the paper, we work over the field $\F=\C(q, Q_{0}, Q_{1})$, where $q, Q_{0}, Q_{1}$ are three indeterminates. When working with superspaces, we denote the parity of a homogeneous element $v$ by $|v| \in \Z_2$; when we write expressions involving $|v|$, we implicitly assume that $v$ is homogeneous. For an element $a\in \Z_2$, we define
\[
    (-1)^{a}
    =
    \begin{cases}
        1 & \text{if $a=\overline{0}$}, \\
        -1 & \text{if $a=\overline{1}$}.
    \end{cases}
\]
For a statement $P$, we define
\[
    \delta_P
    :=
    \begin{cases}
        1 & \text{if $P$ is true}, \\
        0 & \text{otherwise}.
    \end{cases}
\]
In particular, we have the usual Kronecker delta $\delta_{ij} := \delta_{i=j}$.

We fix $r\in \mathbb{Z}_{\geq 2}$, and let $$N=2r+2.$$

Let $V = V_{\overline{0}} \oplus V_{\overline{1}}$ be a vector superspace over $\C$ such that $\dim V_{\overline{0}} =m'$ and $\dim V_{\overline{1}} = n'$ with $m'\neq n'$ and $m'+n'=N$.  We fix a homogeneous basis of $V$:
\[
    v_i,\qquad i \in I_{V} := \left\{1,2,\ldots,N \right\}.
\]
The superspace $\End(V)$, equipped with the supercommutator, forms a Lie superalgebra. We call it the general linear Lie superalgebra and denote it by $\mathfrak{g} = \mathfrak{gl}(V) = \mathfrak{gl}(m'|n',\C)$. The Lie superalgebra $\mathfrak{g}$ has a homogeneous basis given by the matrix units $E_{ij}$, $i,j \in I_{V}$, where the parity of $E_{ij}$ is
\[
    |E_{ij}| = |v_i| + |v_j|.
\]
Let $\mathfrak{h}$ be the Cartan subalgebra of $\mathfrak{g}$ consisting of diagonal matrices. Then the dual space $\mathfrak{h}^{*}$ has a basis $\epsilon_i$, $i \in I_{V}$, defined by $\epsilon_i(E_{jj}) = \delta_{ij}$ for $j \in I_{V}$. We define a symmetric bilinear form on the weight lattice $P = \oplus_{i \in I_{V}} \Z\epsilon_i$ by
\[
    (\epsilon_i, \epsilon_j) =
    (-1)^{|v_i|}\delta_{ij}.
\]
We introduce the notation $|\epsilon_i|$ by setting it to be $|v_i|$ for any $i\in I_{V}$.


We set $I := \left\{ 1,2,\ldots, N-1 \right\}$ and let $\Pi= \left\{ \alpha_i= \epsilon_{i}-\epsilon_{i+1} \mid i \in I \right\}$ be the set of simple roots. Let $P_{\mathrm{af}}=\Z\epsilon_1\oplus \cdots \oplus\Z\epsilon_{N}\oplus \Z \delta$ be a free abelian group of rank $N+1$. We set $\alpha_{0} :=\delta+\epsilon_{N}-\epsilon_{1}$. The symmetric bilinear form on $P$ inducing a symmetric bilinear one on $P_{\mathrm{af}}$ by setting $(\delta, \delta)=(\delta, \epsilon_i)=0$, $\forall i\in I_{V}$, and extending it linearly. For any $i\in I$, we define the parity of $\alpha_i$ by $|\alpha_i|=|\epsilon_i|+|\epsilon_{i+1}|$.

Let $P^\vee = \oplus_{i\in I_{V}} \Z \epsilon^\vee_i$ be the coweight lattice. We have a $\Z$-bilinear pairing
\begin{equation}\label{bilinear form}
    \langle \cdot,\cdot \rangle \colon P^\vee \times P\to \Z, \quad
    \langle \epsilon^\vee_i,\epsilon_j \rangle = \delta_{ij}.
\end{equation}
The set of simple coroots is given by
\begin{equation} \label{hdef}
    \Pi^\vee = \{ h_i  \mid i \in I \},\quad \text{where} ~
	h_i := \epsilon_{i}^\vee - (-1)^{|\alpha_i|} \epsilon_{i+1}^\vee.
\end{equation}
We extend \eqref{bilinear form} to a $\Z$-bilinear pairing $\langle \cdot,\cdot \rangle$ between $P^\vee$ and $P_{\mathrm{af}}$ such that $\langle \epsilon^\vee_i, \delta \rangle=0$ for any $i \in I_{V}$. Set $\epsilon_{0}^\vee :=\epsilon_{N}^\vee$ and $h_{0} := \epsilon_{0}^\vee - (-1)^{|\alpha_{0}|} \epsilon_{1}^\vee$, where $|\alpha_{0}|=|\epsilon_{N}|+|\epsilon_{1}|$. We define the affine Cartan matrix $A^{\mathrm{af}}=(a_{ij})_{i, j\in \hat{I}}$ associated to the above data by
\begin{equation} \label{car matrix}
    a_{ij} = \langle h_i, \alpha_j \rangle
    = \left( 1 + (-1)^{|\alpha_i|} \right) \delta_{ij} - \delta_{i,j+1} - (-1)^{|\alpha_i|} \delta_{i,j-1},\quad \forall i, j\in \hat{I},
\end{equation}
where $\hat{I}=\left\{ 0,1,2,\ldots, N-1 \right\}$. Set $d_i := (-1)^{|\epsilon_i|}$ for any $i\in I_{V}$ and $d_{0} :=d_{N}$. Then we have
\begin{equation} \label{car matrix diaij}
(\alpha_i, \alpha_j) = d_i a_{ij}= d_j a_{ji}, \quad \forall i, j\in \hat{I}.
\end{equation}

Set $q_i :=q^{d_{i}}$ for any $i \in \hat{I}$. Now we can give the definition of the quantum affine superalgebra $\widehat{U}= \widehat{U}_q(\mathfrak{g})$ associated to $\mathfrak{g}$.
\begin{definition}
\label{def:superad}
$\widehat{U}$ is the unital associative superalgebra over $\F$ with generators
\[
    E_i, F_i, \quad \text{for} ~i \in \hat{I}, \qquad
    q^h,\quad \text{for} ~h \in P^\vee,
\]
and parities
\[
    |E_i| = |F_i| = |\alpha_i|,\quad \text{for} ~i \in \hat{I}, \qquad
    |q^h| = 0,\quad \text{for} ~h \in P^\vee,
\]
subject to the following relations: for $h,h' \in P^\vee$, $i,j \in \hat{I}$,
\begin{gather}
    q^0 = 1,\qquad
    q^h q^{h'} = q^{h+h'},
	\\ \label{KEF}
    q^h E_i = q^{\langle h, \alpha_i \rangle} E_i q^h,\qquad
    q^h F_i = q^{- \langle h, \alpha_i \rangle} F_i q^h,
    \\ \label{skunk}
    [E_i,F_j]
	= \delta_{ij} \frac{K_i - K_i^{-1}}{q_i-q_i^{-1}},\qquad
	\text{where } K_i := q^{d_ih_i},
    \\ \label{square}
    [E_i,E_j] = 0,\qquad
	[F_i,F_j] = 0,\qquad
	\text{if } a_{ij}=0,
    \\ \label{racoon}
    [E_i,[E_i,E_j]_{q_i}]_{q_i^{-1}}= 0 = [F_i,[F_i,F_j]_{q_i}]_{q_i^{-1}},\qquad
	\text{if } |a_{ij}|=1,\ |\alpha_i|=\overline{0},
    \\ \label{Serre}
        [[[E_{j-1},E_j]_{q_j},E_{j+1}]_{q_{j+1}}],E_j]=0,\quad
        [[[F_{j-1},F_j]_{q_j},F_{j+1}]_{q_{j+1}}],F_j]=0,
        \text{ if }  a_{jj}=0,
\end{gather}
where
\[
    [x,y]_a := xy-(-1)^{|x| |y|} a yx, \quad \text{for} ~a \in \F,
    \qquad \text{and} \qquad [x,y] := [x,y]_1,
\]
and all indices are taken modulo $N$.
\end{definition}

As in \cite[(6.4.3)]{Yam99}, the multiplication gives a vector superspace isomorphism, called a triangular decomposition of $\widehat{U}$,
\begin{align}\label{atriang}
\widehat{U}^{+}\otimes \widehat{U}^{0}\otimes \widehat{U}^{-}\cong \widehat{U},
\end{align}
where $\widehat{U}^{+}$ (resp. $\widehat{U}^{-}$) denotes the subalgebra of $\widehat{U}$ generated by $E_i$ (resp. $F_i$), $i \in \hat{I}$, and $\widehat{U}^{0}$ denotes the subalgebra of $\widehat{U}$ generated by $q^h$, $h \in P^\vee$. Let $\widehat{U}(\mathfrak{sl})$ be the subalgebra of $\widehat{U}$ generated by $E_i$, $F_i$, $K_i^{\pm 1}$ for $i \in \hat{I}$.

$\widehat{U}$ admits a Hopf superalgebra structure with the comultiplication as follows: for $i \in \hat{I}$, $h \in P^\vee$,
\begin{equation*}
    \Delta(E_i)=E_i\otimes 1+K_i\otimes E_i,\quad \Delta(F_i)=F_i\otimes K_i^{-1}+1 \otimes F_i, \quad \Delta(q^h)=q^h\otimes q^h.
\end{equation*}

Assume that there exists a parity-preserving permutation $\tau$ of $\hat{I}$ such that $\tau^{2}=\mathrm{id}$ and $\tau(i)=N-1-i$ for $0\leq i\leq r$. Given parameters $\varsigma_{i}\in \F$, we set $B_{i} :=F_{i}+\varsigma_{i}E_{\tau i}K_{i}^{-1}$, $\forall i\in \hat{I}$. We require that $\varsigma_{i}=\varsigma_{\tau i}$ for $1\leq i\leq r-1$ and $d_{i}=(-1)^{|\alpha_i|}d_{\tau i}$ for $0\leq i\leq r$. We set $\mathrm{h}_{0}=q^{2d_{N}\epsilon_{N}^\vee}$, $\mathrm{h}_{a}=q^{d_{a}\epsilon_{a}^\vee+d_{N-a}\epsilon_{N-a}^\vee}$ for $1\leq a\leq r$, $\mathrm{h}_{r+1}=q^{2d_{r+1}\epsilon_{r+1}^\vee}$.
\begin{figure}[htbp]
\centering

\begin{tikzpicture}[
    scale=1,
    vertex/.style={
        inner sep=0pt,
        outer sep=1pt
    },
    >=Stealth
]

\node[vertex,label=above:$0$]
    (a0) at (0,0.6) {$\oslash$};

\node[vertex,label=above:$1$]
    (a1) at (1.5,0.6) {$\oslash$};

\node[vertex,label=above:$r-1$]
    (am) at (4.5,0.6) {$\oslash$};

\node[vertex,label=above:$r$]
    (ar) at (6,0.6) {$\oslash$};

\node[vertex,label=below:$2r+1$]
    (b0) at (0,-0.6) {$\oslash$};

\node[vertex,label=below:$2r$]
    (b1) at (1.5,-0.6) {$\oslash$};

\node[vertex,label=below:$r+2$]
    (bm) at (4.5,-0.6) {$\oslash$};

\node[vertex,label=below:$r+1$]
    (br) at (6,-0.6) {$\oslash$};

\draw (a0) -- (a1);
\draw (a1) -- (2.2,0.6);
\node at (3,0.6) {$\cdots$};
\draw (3.8,0.6) -- (am);
\draw (am) -- (ar);

\draw (b0) -- (b1);
\draw (b1) -- (2.2,-0.6);
\node at (3,-0.6) {$\cdots$};
\draw (3.8,-0.6) -- (bm);
\draw (bm) -- (br);

\draw (a0) -- (b0);
\draw (ar) -- (br);

\foreach \x in {0,1.5,4.5,6} {
    \draw[blue,<->,line width=0.5pt]
        (\x+0.18,0.35)
        .. controls (\x+0.40,0.15)
                     and (\x+0.40,-0.15)
        .. (\x+0.18,-0.35);
}

\end{tikzpicture}
\end{figure}

\noindent where
\begin{equation*}
\underset{i}{\oslash}\text{ or } \overset{i}{\oslash}
=
\begin{cases}
\underset{i}{\scalebox{1.8}{$\circ$}}\text{ or } \overset{i}{\scalebox{1.8}{$\circ$}},
  & \text{if } |\alpha_i|=\overline{0},\\[5pt]
\underset{i}{\otimes}\text{ or } \overset{i}{\otimes},
  & \text{if } |\alpha_i|=\overline{1}.
\end{cases}
\end{equation*}

\begin{definition}
\label{def:isuperad}
Let $\widehat{U}^{\jmath}$ be the subalgebra of $\widehat{U}$ generated by the elements $B_{i}$ for $i\in \hat{I}$, $\mathrm{h}_{j}^{\pm 1}$ for $0\leq j\leq r+1$.
\end{definition}

For $0\leq i\leq r$, we define $\mathrm{k}_{i} := \mathrm{h}_{i}\mathrm{h}_{i+1}^{-1}$ and $\mathrm{k}_{\tau i}= \mathrm{k}_{i}^{-1}$. We see that $\mathrm{k}_{i}=K_iK_{\tau i}^{-1}$ for any $i\in \hat{I}$. Then we have the following theorem.

\begin{thm}
\label{def:ipreseab}
$\widehat{U}^{\jmath}$ is a right coideal subalgebra of $\widehat{U}$. Moreover, it admits the following presentation: for $0\leq a, b\leq r+1$ and $i, j, k\in \hat{I}$,
\begin{align}\label{KEF1}
    \mathrm{h}_{a}\mathrm{h}_{b}=\mathrm{h}_{b}\mathrm{h}_{a},\qquad
    \mathrm{h}_{a}\mathrm{h}_{a}^{-1}=1=\mathrm{h}_{a}^{-1}\mathrm{h}_{a},
\end{align}
\begin{align}\label{skunk2}
    \mathrm{h}_{a}B_{i}=q^{- \langle d_{a}\epsilon_{a}^\vee+d_{N-a}\epsilon_{N-a}^\vee, \alpha_i \rangle}B_{i}\mathrm{h}_{a},
\end{align}
\begin{align}\label{square33}
    \text{for }1\leq i\leq r-1, \qquad [B_{\tau i},B_i]=
\frac{\varsigma_{\tau i}\mathrm{k}_{i} - \varsigma_{i}\mathrm{k}_{i}^{-1}}{q_i-q_i^{-1}},
\end{align}
\begin{align}\label{square3333}
    \text{for $i\not\equiv j\pm 1\pmod N$ and $j\neq \tau i$}, \qquad [B_{j},B_i]=0,
\end{align}
\begin{align*}
    \text{for }|\alpha_i|=\overline{0}, i\neq j \text{ and }i\sim j, \text{ we ~have }\\
    \end{align*}
\begin{align}\label{squareSerre5}
    [B_i,[B_i,B_j]_{q_i}]_{q_i^{-1}}=\begin{cases}
-(q_i+q_i^{-1})(q_i\varsigma_{\tau i}\mathrm{k}_{i}B_i+q_i^{-2}\varsigma_{i}\mathrm{k}_{i}^{-1}B_i)&\hspace{0.35cm}\text{if }j=\tau i,\\[0.3em]
0&\hspace{0.35cm}\text{otherwise},
\end{cases}
\end{align}
\begin{align*}
    \text{for }|\alpha_j|=\overline{1}, i=j-1 \text{ and }k=j+1, \text{ we have } \\
\end{align*}
\begin{align}\label{Serre6}
        &[[[B_{i},B_j]_{q_j},B_{k}]_{q_{k}}],B_j]\\
        =&\notag \begin{cases}
0&\hspace{0.35cm}\text{if }\{i, j, k\}\cap \{\tau i, \tau j, \tau k\}=\varnothing,\\[0.3em]
(-1)^{|\alpha_i|}\Big(q_{\tau j}^{-1}\varsigma_{\tau j}[B_{j},B_{i}]_{q_j}\mathrm{k}_{j}+\varsigma_{j}[B_{j},B_{i}]_{q_{\tau j}}\mathrm{k}_{j}^{-1}\Big)&\hspace{0.35cm}\text{if }k=\tau j,\\[0.3em]
 (-1)^{|\alpha_k|}\Big(q_j^{-1}\varsigma_{\tau j}[B_j,B_k]_{q_{\tau j}}\mathrm{k}_j+\varsigma_j[B_j,B_k]_{q_j}\mathrm{k}_{j}^{-1}\Big)&\hspace{0.35cm}\text{if }i=\tau j.
\end{cases}
\end{align}
\end{thm}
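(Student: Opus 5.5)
The argument has two parts: the coideal property, and the verification of the relations together with a completeness argument for the presentation.

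\textbf{Coideal property.} It suffices to check the generators, since $\Delta$ is an algebra map. The elements $\mathrm{h}_a$ are grouplike, so $\Delta(\mathrm{h}_a)=\mathrm{h}_a\otimes\mathrm{h}_a\in\widehat{U}^{\jmath}\otimes\widehat{U}$. For $B_i$, the Hopf structure gives $\Delta(E_{\tau i}K_i^{-1})=E_{\tau i}K_i^{-1}\otimes K_i^{-1}+K_{\tau i}K_i^{-1}\otimes E_{\tau i}K_i^{-1}$. Hence
\[
\Delta(B_i)=B_i\otimes K_i^{-1}+1\otimes F_i+\varsigma_i\,\mathrm{k}_i^{-1}\otimes E_{\tau i}K_i^{-1}.
\]
Since $\mathrm{k}_i=K_iK_{\tau i}^{-1}$ is a product of the $\mathrm{h}_a^{\pm1}$, this lies in $\widehat{U}^{\jmath}\otimes\widehat{U}$. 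This is where the parity condition $d_i=(-1)^{|\alpha_i|}d_{\tau i}$ enters: it makes $K_iK_{\tau i}^{-1}$ expressible through the $\mathrm{h}$'s.

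\textbf{Verifying the relations.} Relations \eqref{KEF1} and \eqref{skunk2} follow directly from \eqref{KEF}. To see this, observe that $E_{\tau i}K_i^{-1}$ and $F_i$ have the same $\mathrm{h}_a$-weight. This holds because $\langle d_a\epsilon_a^\vee+d_{N-a}\epsilon_{N-a}^\vee,\alpha_{\tau i}\rangle=-\langle\cdot,\alpha_i\rangle$, by the symmetry $a\leftrightarrow N-a$ combined with the condition on the $d$'s.

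For \eqref{square33} and \eqref{square3333}, expand $[B_j,B_i]$ into four terms and apply \eqref{skunk} and \eqref{square}.
\begin{itemize}
\item The terms $[F_j,F_i]$ and $[E_{\tau j}K_j^{-1},E_{\tau i}K_i^{-1}]$ vanish when $a_{ij}=0$, hence also $a_{\tau i,\tau j}=0$.
\item The cross terms produce $\delta_{j,\tau i}$ multiples of $(K-K^{-1})/(q_i-q_i^{-1})$ times a Cartan factor. This factor combines into $\varsigma_{\tau i}\mathrm{k}_i-\varsigma_i\mathrm{k}_i^{-1}$, using $\varsigma_i=\varsigma_{\tau i}$.
\end{itemize}

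For the Serre-type relations \eqref{squareSerre5} and \eqref{Serre6}, expand $B_i=F_i+\varsigma_iE_{\tau i}K_i^{-1}$ multilinearly. The pure $F$-part vanishes by \eqref{racoon} or \eqref{Serre}. The pure $E$-part vanishes after commuting the $K$'s to the right, since it reduces to the $E$-Serre relation for $\tau$-indices twisted by $q$-powers. Mixed terms vanish unless some $E_{\tau\ell}$ pairs with an $F_\ell$ via \eqref{skunk}, which forces $\{i,j,k\}\cap\{\tau i,\tau j,\tau k\}\neq\varnothing$. In the remaining cases ($j=\tau i$ for the quadratic Serre relation; $k=\tau j$ or $i=\tau j$ for the quartic one), compute the surviving terms, as in \cite{CS26} and \cite{FLLLWW20}, by pushing the $K$'s to the right and collecting coefficients.

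\textbf{Completeness of the presentation.} Let $\widetilde{U}$ be the algebra defined abstractly by the stated relations, with the obvious surjection $\widetilde{U}\to\widehat{U}^{\jmath}$. To prove injectivity, filter $\widetilde{U}$ by degree in the $B$'s. The associated graded of $\widehat{U}^{\jmath}$ is identified with $\widehat{U}^{-}\otimes\F[\mathrm{h}^{\pm1}]$, using the triangular decomposition \eqref{atriang}: the leading term of $B_i$ is $F_i$. Meanwhile, $\mathrm{gr}\,\widetilde{U}$ is a quotient of the algebra with relations \eqref{square}, \eqref{racoon} and \eqref{Serre} for the $F$'s, because the right-hand sides above have lower degree. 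It remains to show that these relations present $\widehat{U}^{-}$. This is Yamane's theorem, \cite{Yam99}, for the negative half, and it gives the isomorphism as in Letzter's argument.

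\textbf{Main obstacle.} I expect the quartic relation \eqref{Serre6} in the cases $k=\tau j$ and $i=\tau j$ to be hardest. Its left-hand side is a nested triple super-commutator whose mixed terms involve Koszul signs and nontrivial $K$-commutations. I would first reduce it, via super-Jacobi identities for $q$-brackets, to the already established relation \eqref{square33} for the pair $(j,\tau j)$, and only then match coefficients.
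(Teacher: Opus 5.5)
Your proposal follows the paper's proof essentially step for step: the same formula for $\Delta(B_i)$ gives the coideal property, and \eqref{KEF1}--\eqref{square3333} are checked directly, with the Serre-type relations deferred to computations as in \cite{CS26}. Completeness uses the same Letzter-type filtration argument, combining Yamane's presentation of $\widehat{U}^{-}$ with the triangular decomposition \eqref{atriang}.

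Two small corrections.

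(i) Your ``main obstacle'' plan to reduce \eqref{Serre6} to \eqref{square33} for the pair $(j,\tau j)$ cannot work. In the cases $k=\tau j$ or $i=\tau j$, the nodes $j$ and $\tau j$ are adjacent (e.g.\ $j=r$, $\tau j=r+1$), whereas \eqref{square33} only covers $1\leq i\leq r-1$. The lower-order terms must instead come directly from the pairings $[E_j,F_j]$ and $[E_{\tau j},F_{\tau j}]$ via \eqref{skunk}, as in your earlier paragraph.

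(ii) The identity $\mathrm{k}_i=K_iK_{\tau i}^{-1}$ holds without the condition $d_i=(-1)^{|\alpha_i|}d_{\tau i}$. That condition is really needed for \eqref{skunk2}, and for matching $q_{\tau i}$ with $q_i^{\pm1}$ in \eqref{square33}.
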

\begin{proof}
We have $\Delta(B_i)=B_i\otimes K_i^{-1}+ 1\otimes F_i+ \varsigma_{i}\mathrm{k}_{i}^{-1}\otimes E_{\tau i}K_{i}^{-1}\in \widehat{U}^{\jmath}\otimes \widehat{U}$ and $\Delta(\mathrm{h}_{j}^{\pm 1})=\mathrm{h}_{j}^{\pm 1}\otimes \mathrm{h}_{j}^{\pm 1}\in \widehat{U}^{\jmath}\otimes \widehat{U}$. Thus, we see that $\widehat{U}^{\jmath}$ is a right coideal subalgebra of $\widehat{U}$.

Recall that we assume $d_{i}=(-1)^{|\alpha_i|}d_{\tau i}$ for $0\leq i\leq r$ and $d_{0}=d_{N}$. By this we can easily obtain
\begin{align}\label{didni}
d_{i}=d_{N-i} \quad \text{for }0\leq i\leq r+1.
\end{align}
Using \eqref{didni}, we can verify that \eqref{skunk2} holds. By \eqref{car matrix} and \eqref{car matrix diaij}, we have
\begin{align}\label{alphaij}
(\alpha_i, \alpha_j)=(\alpha_{\tau i}, \alpha_{\tau j}),\quad \forall i, j\in \hat{I}.
\end{align}
Using \eqref{alphaij}, we can verify that \eqref{square3333} holds.

\eqref{square33}, \eqref{squareSerre5} and \eqref{Serre6} can be proved in a similar way as \cite[Prop. 3.9-3.11]{CS26}.

Assume that $\mathcal{A}$ is the abstract superalgebra generated by
\begin{align*}
     \tilde{b}_{i}\quad (i\in \hat{I}),\qquad \tilde{h}_{j}^{\pm 1}\quad (0\leq j\leq r+1),
\end{align*}
subject to the relations \eqref{KEF1}--\eqref{Serre6} with $B_{i}$ and $\mathrm{h}_{j}^{\pm 1}$ replaced by $\tilde{b}_{i}$ and $\tilde{h}_{j}^{\pm 1}$ and with parities $|\tilde{b}_{i}| = |\alpha_i|$ and $|\tilde{h}_{j}^{\pm 1}| =0$. The above calculations give a surjective homomorphism
\begin{align*}
     \Phi :\mathcal{A}\longrightarrow \widehat{U}^{\jmath}, \quad \tilde{b}_{i}\mapsto B_{i},\quad  \tilde{h}_{j}^{\pm 1}\mapsto \mathrm{h}_{j}^{\pm 1}.
\end{align*}
We need to show that $\Phi$ is injective.

Define a filtration on $\mathcal{A}$ by
\begin{align*}
     \mathrm{deg}(\tilde{b}_{i})=1, ~~\text{for }i\in \hat{I},\qquad \mathrm{deg}(\tilde{h}_{j}^{\pm 1})=0, ~~\text{for }0\leq j\leq r+1.
\end{align*}
It is easy to see that the leading homogeneous parts of \eqref{square33}--\eqref{Serre6} are precisely the defining Serre relations of $\widehat{U}^{-}$. Let $T$ be the free superalgebra generated by symbols $f_{i}$ with parities $|f_{i}| = |\alpha_i|$, $\forall i\in \hat{I}$, and let $V$ be the ideal generated by the defining Serre relations with replacing $F_{i}$ by $f_{i}$. By \cite[Prop. 6.8.1]{Yam99}, we have
\begin{align*}
    \widehat{U}^{-}\cong T/V.
\end{align*}

For any $J=(j_1,\ldots, j_{s})\in \hat{I}^{s}$, we define the length of $J$ by $|J|=s$, and introduce the following shorthand notations:
\begin{align*}
     F_{J}=F_{j_1}\cdots F_{j_s},\qquad f_{J}=f_{j_1}\cdots f_{j_s},\qquad \tilde{b}_{J}=\tilde{b}_{j_1}\cdots \tilde{b}_{j_s},\qquad B_{J}=B_{j_1}\cdots B_{j_s}.
\end{align*}
For $\lambda=(\lambda_{0},\ldots,\lambda_{r+1})\in \mathbb{Z}^{r+2}$, we set
\begin{align*}
\mathrm{h}^{\lambda} :=\mathrm{h}_{0}^{\lambda_{0}}\cdots \mathrm{h}_{r+1}^{\lambda_{r+1}}\qquad  \text{and} \qquad \tilde{h}^{\lambda} :=\tilde{h}_{0}^{\lambda_{0}}\cdots \tilde{h}_{r+1}^{\lambda_{r+1}}.
\end{align*}
For $t\in \mathbb{Z}_{\geq 0}$, let $\mathcal{F}^{t}(\mathcal{A})$ denote the subspace of $\mathcal{A}$ spanned by all elements $\tilde{h}^{\lambda}\tilde{b}_{K}$ for $\lambda\in \mathbb{Z}^{r+2}$ and $|K|\leq t$.

Choose $\mathcal{I}$ a fixed subset of $\cup_{s\in \mathbb{Z}_{\geq 0}} \hat{I}^{s}$ such that $\{F_{J}\:|\:J\in \mathcal{I}\}$ is a homogeneous monomial basis of $\widehat{U}^{-}$. For any $L\in \hat{I}^{t}$, homogeneity of the defining Serre relations makes sure that
\begin{align*}
     F_{L}=\sum\limits_{J\in \mathcal{I}\atop |J|=t}c_{LJ}F_{J}, \quad \text{ where $c_{LJ}\in \F$.}
\end{align*}
Thus, in $T$, we have
\begin{align*}
     f_{L}-\sum\limits_{J\in \mathcal{I}\atop |J|=t}c_{LJ}f_{J}=\sum\limits_{c}u_{c}S_{c}v_{c},
\end{align*}
where each $S_{c}$ is one of the homogeneous Serre relations and $u_{c}, v_{c}\in T$. Applying the algebra homomorphism $T\rightarrow \mathcal{A}$, ~~$f_{i}\mapsto \tilde{b}_{i}$, we obtain
\begin{align*}
     \tilde{b}_{L}-\sum\limits_{J\in \mathcal{I}\atop |J|=t}c_{LJ}\tilde{b}_{J}=\sum\limits_{c}u_{c}(\tilde{b})S_{c}(\tilde{b})v_{c}(\tilde{b}).
\end{align*}
By \eqref{square33}--\eqref{Serre6}, we have
\begin{align}\label{bjbj1}
\tilde{b}_{L}-\sum\limits_{J\in \mathcal{I}\atop |J|=t}c_{LJ}\tilde{b}_{J}\in \mathcal{F}^{t-1}(\mathcal{A}).
\end{align}
By induction on $t$, together with \eqref{KEF1} and \eqref{skunk2}, we can therefore show that $\mathcal{A}$ is spanned by the elements
\begin{align*}
     \tilde{h}^{\lambda}\tilde{b}_{J},\quad \lambda\in \mathbb{Z}^{r+2}, J\in \mathcal{I}.
\end{align*}
It remains to prove that the images of the above elements under $\Phi$ are linearly independent.

We now put a filtration on $\widehat{U}$:
\begin{align*}
     \mathrm{deg}_{F}(F_{i})=1, ~~~~\mathrm{deg}_{F}(E_{i})=0,~~~~\forall i\in \hat{I},\qquad \mathrm{deg}_{F}(q^h)=0, ~~\text{for} ~h \in P^\vee.
\end{align*}
Then $B_{i} :=F_{i}+\varsigma_{i}E_{\tau i}K_{i}^{-1}$ has leading term $F_{i}$ with $F$-degree $1$. Therefore, for any $J=(j_1,\ldots, j_{t})\in \hat{I}^{t}$, we have
\begin{align*}
     B_{J}=F_{J}+\text{terms of $F$-degree $< t$}.
\end{align*}
Indeed, the unique term of degree $t$ is obtained by choosing $F_{j_s}$ from every $B_{j_s}$. If at least one $E_{\tau j_s}$ is chosen, at most $t-1$ negative generators remain, and commuting an element $E_{s}$ with another $F_{s}$ can only replace an $(E_sF_s, F_sE_s)$ pair by a Cartan element, thereby lowering the $F$-degree further.

Now suppose that
\begin{align*}
     \sum\limits_{\lambda, J}c_{\lambda, J}\mathrm{h}^{\lambda}B_{J}=0.
\end{align*}
Choose $t$ maximal such that some $c_{\lambda, J}\neq 0$ and $|J|=t$. Taking the component of degree $t$ in the triangular decomposition yields
\begin{align*}
     \sum\limits_{\lambda, J\atop |J|=t}c_{\lambda, J}\mathrm{h}^{\lambda}F_{J}=0.
\end{align*}
It is easy to see that the $\mathrm{h}^{\lambda}$'s are distinct elements of $\widehat{U}^{0}$, and therefore they are linearly independent. Since the $F_{J}$'s are also linearly independent, by the triangular decomposition \eqref{atriang} we see that the products $\mathrm{h}^{\lambda}F_{J}$'s are linearly independent. Thus, we obtain $c_{\lambda, J}=0$ for all $\lambda$ and $|J|=t$. By induction on $F$-degree we show that $c_{\lambda, J}=0$ for all $\lambda$ and $J$. Thus, the elements
\begin{align*}
     \mathrm{h}^{\lambda}B_{J},\quad \lambda\in \mathbb{Z}^{r+2}, J\in \mathcal{I},
\end{align*}
are linearly independent in $\widehat{U}^{\jmath}$. Hence, $\mathrm{Ker}(\Phi)=\{0\}$, and so $\mathcal{A}\cong \widehat{U}^{\jmath}$. We are done.
\end{proof}


Let $\widehat{U}^{\jmath}(\mathfrak{sl})$ be the subalgebra of $\widehat{U}(\mathfrak{sl})$ generated by $B_{i}$ for $i\in \hat{I}$ and $\mathrm{k}_{j}^{\pm 1}$ for $0\leq j\leq r$. We have an analog of Theorem \ref{def:ipreseab} for $\widehat{U}^{\jmath}(\mathfrak{sl})$.
\begin{thm}
\label{def:ipreseabslma}
$\widehat{U}^{\jmath}(\mathfrak{sl})$ is a right coideal subalgebra of $\widehat{U}(\mathfrak{sl})$. Moreover, it admits the following presentation: for $0\leq a, b\leq r$ with $\mathrm{k}_{\tau a}= \mathrm{k}_{a}^{-1}$ and $i\in \hat{I}$,

\begin{align}\label{KEF1add}
    \mathrm{k}_{a}\mathrm{k}_{b}=\mathrm{k}_{b}\mathrm{k}_{a},\qquad
    \mathrm{k}_{a}\mathrm{k}_{a}^{-1}=1=\mathrm{k}_{a}^{-1}\mathrm{k}_{a},
\end{align}
	
\begin{align}\label{skunk2add}
    \mathrm{k}_{a}B_{i}=q^{-(\alpha_{a}, \alpha_i)+(\alpha_{\tau a}, \alpha_i)}B_{i}\mathrm{k}_{a},
\end{align}
together with the relations \eqref{square33}--\eqref{Serre6}.
\end{thm}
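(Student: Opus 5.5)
We follow the proof of Theorem \ref{def:ipreseab} step by step, with $\widehat{U}(\mathfrak{sl})$ in place of $\widehat{U}$ and the $\mathrm{k}_a$ in place of the $\mathrm{h}_a$.

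\emph{Coideal property.} First we note that $\mathrm{k}_a = K_aK_{\tau a}^{-1}$ lies in $\widehat{U}(\mathfrak{sl})$, since it is a product of the $K_i^{\pm1}$. Moreover $\Delta(\mathrm{k}_a^{\pm1}) = \mathrm{k}_a^{\pm1}\otimes \mathrm{k}_a^{\pm1}$. For the generators $B_i$ we use
\[
\Delta(B_i)=B_i\otimes K_i^{-1}+1\otimes F_i+\varsigma_i\mathrm{k}_i^{-1}\otimes E_{\tau i}K_i^{-1},
\]
where $\mathrm{k}_i^{-1}=\mathrm{k}_{\tau i}$ is one of the generators. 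Hence $\Delta(B_i)\in \widehat{U}^{\jmath}(\mathfrak{sl})\otimes \widehat{U}(\mathfrak{sl})$, and $\widehat{U}^{\jmath}(\mathfrak{sl})$ is a right coideal subalgebra.

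\emph{The relations hold.} Relation \eqref{KEF1add} is immediate. For \eqref{skunk2add} we compute the commutation of $K_a$ with $F_i$ and with $E_{\tau i}K_i^{-1}$. On $F_i$, the element $K_a$ acts by $q^{-(\alpha_a,\alpha_i)}$, using $d_a a_{ai}=(\alpha_a,\alpha_i)$ from \eqref{car matrix diaij}. On $E_{\tau i}$ it acts by $q^{(\alpha_a,\alpha_{\tau i})}$. By \eqref{alphaij} we have $(\alpha_a,\alpha_{\tau i})=(\alpha_{\tau a},\alpha_i)$. Combining these, both summands of $B_i$ are rescaled by the same factor $q^{-(\alpha_a,\alpha_i)+(\alpha_{\tau a},\alpha_i)}$. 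The relations \eqref{square33}--\eqref{Serre6} hold in $\widehat{U}^{\jmath}\supseteq \widehat{U}^{\jmath}(\mathfrak{sl})$ by Theorem \ref{def:ipreseab}. Their right-hand sides involve only the $\mathrm{k}_i^{\pm1}$, so they are genuine relations inside $\widehat{U}^{\jmath}(\mathfrak{sl})$.

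\emph{Injectivity.} Let $\mathcal{A}'$ be the abstract superalgebra on $\tilde b_i$ and $\tilde k_a^{\pm1}$ with these relations, mapping onto $\widehat{U}^{\jmath}(\mathfrak{sl})$. The spanning argument is identical to before. Filter by $\deg\tilde b_i=1$. The leading terms of \eqref{square33}--\eqref{Serre6} are the Serre relations of $\widehat{U}^{-}\cong T/V$. The analogue of \eqref{bjbj1}, together with \eqref{skunk2add}, then shows by induction that $\mathcal{A}'$ is spanned by the elements $\tilde k^{\mu}\tilde b_J$ with $\mu\in\mathbb{Z}^{r+1}$ and $J\in\mathcal{I}$. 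For linear independence of the images $\mathrm{k}^\mu B_J$, we use the $F$-degree filtration on $\widehat{U}$ and the triangular decomposition \eqref{atriang}. The top component of $\mathrm{k}^\mu B_J$ is $\mathrm{k}^\mu F_J$, so this reduces to the linear independence of the monomials $\mathrm{k}^\mu$ in $\widehat{U}^0$.

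\emph{Main obstacle.} The main point to check is that the monomials $\mathrm{k}^\mu=\mathrm{k}_0^{\mu_0}\cdots\mathrm{k}_r^{\mu_r}$ are pairwise distinct group elements of $q^{P^\vee}$. It suffices to show that $\mathrm{k}_0,\dots,\mathrm{k}_r$ generate a free abelian group of rank $r+1$. Writing $\mathrm{k}_a=\mathrm{h}_a\mathrm{h}_{a+1}^{-1}$, each $\mathrm{h}_a$ is $q$ raised to a nonzero coweight supported on $\{\epsilon_a^\vee,\epsilon_{N-a}^\vee\}$ (with $\epsilon_0^\vee=\epsilon_N^\vee$). These supports are pairwise disjoint for $0\le a\le r+1$, so the $\mathrm{h}_a$ are multiplicatively independent. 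The quotients $\mathrm{h}_a\mathrm{h}_{a+1}^{-1}$ for $0\le a\le r$ are then independent as well, because the matrix expressing them in terms of $\mathrm{h}_0,\dots,\mathrm{h}_{r+1}$ is bidiagonal of full rank $r+1$. Distinct group elements of $q^{P^\vee}$ are linearly independent in $\widehat{U}^0$, which completes the argument.
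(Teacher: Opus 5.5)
Your proposal is correct and takes the same route as the paper, which proves this theorem only by asserting it as the analogue of Theorem \ref{def:ipreseab}. That means the coideal property from the coproduct formula, spanning via the degree filtration and Yamane's presentation of $\widehat{U}^{-}$, and linear independence via the $F$-degree and the triangular decomposition. Your explicit check that $\mathrm{k}_0,\dots,\mathrm{k}_r$ generate a free abelian group of rank $r+1$ correctly supplies the one detail that differs from the $\mathrm{h}$-case.
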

We shall call $\widehat{U}^{\jmath}$ and $\widehat{U}^{\jmath}(\mathfrak{sl})$ the $\jmath$-quantum affine superalgebras.

\section{The commutativity theorem}\label{comm}
In this section we construct an action on some infinite-dimensional tensor superspace of the affine Hecke algebra of type $C$ with three parameters, and prove the commutativity theorem as the first main step toward the desired Schur--Weyl duality.

For any $a, b\in \mathbb{Z}$ with $a\leq b$, we denote
$$[a..b] :=[a,b]\cap \mathbb{Z}=\{a, a+1,\ldots, b\}.$$

We fix $m, n\in \mathbb{Z}_{\geq 1}$ with $m\neq n$. Denote by $\mathbb{I}_{\overline{0}} :=[-m..m]$ and $\mathbb{I}_{\overline{1}} :=[-m-n..-m-1]\sqcup [m+1..m+n+1]$. Set $\mathbb{I} :=\mathbb{I}_{\overline{0}}\cup \mathbb{I}_{\overline{1}}$. The parity of an element $i\in \mathbb{I}$ is defined by
\begin{align*}
p(i)=\left\{
\begin{array}{ll}
\overline{0}&\hspace{0.35cm}\text{if $i\in \mathbb{I}_{\overline{0}}$},\\[0.3em]
\overline{1}&\hspace{0.35cm}\text{if $i\in \mathbb{I}_{\overline{1}}$}.
\end{array}
\right.
\end{align*}
Since each $a\in \mathbb{Z}$ has a unique expression $a=\overline{a}+b(2m+2n+2)$ for some $b\in \mathbb{Z}$ and $\overline{a}\in \mathbb{I}$, we set $p(a) :=p(\overline{a})$.

Let $V$ be the vector superspace over $\C$ with a basis $\{v_{1}, v_{2},\ldots,v_{2m+2n+2}\}$ and parities $|v_{j}|=p(j)$. In $\S$\ref{def:quanaffis}, associated to $V$ we have defined the quantum affine superalgebra and the associated $\jmath$-quantum affine superalgebra, which are also denoted by $\widehat{U}$ and $\widehat{U}^{\jmath}$, respectively. Note that in this setting, we have $m'=2m+1$, $n'=2n+1$, $r=m+n$ and $m\neq n$. Let $\mathbb{V}$ be an infinite dimensional superspace with an $\F$-basis $\{v_{j}\:|\:j\in \mathbb{Z}\}$ and parities $|v_{j}|=p(j)$. Then we have the following lemma.
\begin{lem}\label{uvactoni}
The following defines an action of $\widehat{U}$ on $\mathbb{V}$:
\begin{align*}
E_{i} v_{j+1}=\left\{
\begin{array}{ll}
v_j&\hspace{0.35cm}\text{if $j\equiv i\pmod N$},\\[0.3em]
0&\hspace{0.35cm}\text{otherwise}.
\end{array}
\right.
\end{align*}
\begin{align*}
F_{i} v_{j}=\left\{
\begin{array}{ll}
v_{j+1}&\hspace{0.35cm}\text{if $j\equiv i\pmod N$},\\[0.3em]
0&\hspace{0.35cm}\text{otherwise}.
\end{array}
\right.
\end{align*}
\begin{align*}
q^{\epsilon_{i}^{\vee}} v_{j}=\left\{
\begin{array}{ll}
qv_{j}&\hspace{0.35cm}\text{if $j\equiv i\pmod N$},\\[0.3em]
v_j&\hspace{0.35cm}\text{otherwise}.
\end{array}
\right.
\end{align*}
\end{lem}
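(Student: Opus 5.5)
Since $\mathbb V$ is a direct sum of one-dimensional weight spaces and every generator acts by a weight-shifting operator, I will check the defining relations of Definition \ref{def:superad} directly on each basis vector $v_j$. The key observation is that the assignment is periodic: reducing $j$ modulo $N$ identifies the action on $\mathbb V$ with the natural vector representation of $\widehat U$ on $V\otimes \F[z,z^{-1}]$, via $v_{j+bN}\leftrightarrow v_{\bar j}z^{b}$. Here $E_0,F_0$ carry the factor $z^{\mp 1}$. Under this identification the relations reduce to finitely many residue-class computations.

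First I will check the parity requirement. $F_i$ sends $v_j$ to $v_{j+1}$ only when $j\equiv i$, so it has parity $p(i)+p(i+1)=|\alpha_i|$, and the same holds for $E_i$. Next, the Cartan relations \eqref{KEF} follow because $v_j$ has weight $\epsilon_{\bar j}$ (indices mod $N$). Since $F_i$ maps weight $\epsilon_i$ to weight $\epsilon_{i+1}$, it shifts weights by $-\alpha_i$, and $\langle h,\cdot\rangle$ is additive.

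For \eqref{skunk}, when $i\ne j$ the products $E_iF_j$ and $F_jE_i$ both kill every $v_k$ after a short residue check. Note that $N\ge 6$, so there are no accidental coincidences. When $i=j$, we have $[E_i,F_i]=E_iF_i-(-1)^{|\alpha_i|}F_iE_i$, which acts on $v_k$ by $\delta_{k\equiv i}-(-1)^{|\alpha_i|}\delta_{k\equiv i+1}$. On the other side, $K_i=q^{d_i\epsilon_i^\vee-d_i(-1)^{|\alpha_i|}\epsilon_{i+1}^\vee}$ acts on $v_i$ by $q_i$, on $v_{i+1}$ by $q_i^{-(-1)^{|\alpha_i|}}$, and trivially elsewhere. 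The quotient $(K_i-K_i^{-1})/(q_i-q_i^{-1})$ therefore gives $1$ on $v_i$ and $-(-1)^{|\alpha_i|}$ on $v_{i+1}$, which matches.

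For the Serre-type relations \eqref{square}, \eqref{racoon} and \eqref{Serre}, every monomial of degree at least $2$ in the $F$'s (or in the $E$'s) acting on a single $v_k$ is nonzero only if its indices are consecutive increasing (respectively decreasing) residues. The relation $[F_i,F_j]=0$ with $a_{ij}=0$ forces $j\ne i\pm1$, and $i=j$ only occurs when $\alpha_i$ is odd; in the latter case $F_i^2=0$. So both sides vanish. For \eqref{racoon}, the operator $F_i^2$ acts as zero, so only the terms $-(q_i+q_i^{-1})F_iF_jF_i$ survive, and these vanish because the indices $i,j,i$ are not consecutive. Finally, \eqref{Serre} expands into monomials of degree $4$ that contain $F_j$ twice. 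No such word can have consecutive indices, so every term kills $\mathbb V$. The $E$ relations are handled symmetrically.

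I expect the main obstacle to be bookkeeping rather than any single hard step. The delicate points are the Koszul signs in the super-commutators, and the cases involving the index $0$, where the wrap-around from $v_N$ to $v_{N+1}$ must be consistent with the residue conventions. Beyond that, the check is routine once the action is identified with the vector representation.
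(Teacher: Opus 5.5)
Your proposal is correct and follows essentially the paper's route. Like the paper, you compute the action of $K_i$ to verify \eqref{skunk}, and you dispose of \eqref{square}, \eqref{racoon} and \eqref{Serre} by observing that a product of $E$'s or $F$'s acts nonzero only when its indices form a consecutive chain; your parity, sign and expansion checks are all right. One harmless inaccuracy: the weight spaces of $\mathbb{V}$ are not one-dimensional, since $v_j$ and $v_{j+N}$ have the same weight for every $q^h$ ($\langle h,\delta\rangle=0$ for all $h\in P^\vee$), but your basis-vector-by-basis-vector verification never uses that claim.
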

\begin{proof}
Since $K_i= q^{d_ih_i}=q^{d_i\epsilon_{i}^{\vee}-d_{i+1}\epsilon_{i+1}^{\vee}}$, we have
\begin{align*}
K_{i} v_{j}=\left\{
\begin{array}{ll}
q^{d_{i}}v_j&\hspace{0.35cm}\text{if $j\equiv i$ (mod $N$)},\\[0.3em]
q^{-d_{i+1}}v_j&\hspace{0.35cm}\text{if $j\equiv i+1$ (mod $N$)},\\[0.3em]
v_j&\hspace{0.35cm}\text{otherwise}.
\end{array}
\right.
\end{align*}
Using this, we can easily verify that \eqref{skunk} holds. The other relations can be verified similarly, noting that the action of a product $E_{i_1}\cdots E_{i_s}$ or $F_{i_1}\cdots F_{i_s}$ can be nonzero only when the indices form a consecutive chain.
\end{proof}

From Lemma \ref{uvactoni}, we can obtain an explicit description of the action of $\widehat{U}^{\jmath}$ on $\mathbb{V}$.
\begin{lem}\label{uimahtvactoni}
The following defines an action of $\widehat{U}^{\jmath}$ on $\mathbb{V}$:
\begin{align*}
\mathrm{h}_{a} v_{j}=\left\{
\begin{array}{ll}
q^{2d_{a}}v_j&\hspace{0.35cm}\text{if $a=0, m+n+1$; $j\equiv a\pmod N$},\\[0.3em]
q^{d_{a}}v_j&\hspace{0.35cm}\text{if $1\leq a\leq m+n$; $j\equiv a$ or $-a\pmod N$},\\[0.3em]
v_j&\hspace{0.35cm}\text{otherwise},
\end{array}
\right.
\end{align*}
\begin{align*}
\text{for $i\neq 0, m+n+1$},\qquad B_{i} v_{j}=\left\{
\begin{array}{ll}
v_{j+1}&\hspace{0.35cm}\text{if $j\equiv i\pmod N$},\\[0.3em]
\varsigma_{i}v_{j-1}&\hspace{0.35cm}\text{if $j\equiv -i\pmod N$},\\[0.3em]
0&\hspace{0.35cm}\text{otherwise},
\end{array}
\right.
\end{align*}
\begin{align*}
\text{for $i= 0, m+n+1$},\qquad B_{i} v_{j}=\left\{
\begin{array}{ll}
v_{j+1}+\varsigma_{i}q^{-d_{i}}v_{j-1}&\hspace{0.35cm}\text{if $j\equiv i\pmod N$},\\[0.3em]
0&\hspace{0.35cm}\text{otherwise}.
\end{array}
\right.
\end{align*}
\end{lem}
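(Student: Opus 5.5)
Since $\widehat{U}^{\jmath}$ is by Definition \ref{def:isuperad} a subalgebra of $\widehat{U}$, and Lemma \ref{uvactoni} already gives a $\widehat{U}$-module structure on $\mathbb{V}$, restriction automatically makes $\mathbb{V}$ a $\widehat{U}^{\jmath}$-module. The content of the lemma is therefore only that the generators $\mathrm{h}_a$ and $B_i$ act by the displayed formulas. I will compute these actions directly from Lemma \ref{uvactoni}, using $B_i=F_i+\varsigma_iE_{\tau i}K_i^{-1}$. There is no need to recheck the relations of Theorem \ref{def:ipreseab}.

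\textbf{The elements $\mathrm{h}_a$.} By Lemma \ref{uvactoni}, $q^{\epsilon_k^\vee}v_j=q^{\delta_{j\equiv k}}v_j$. Hence $\mathrm{h}_0=q^{2d_N\epsilon_N^\vee}$ multiplies $v_j$ by $q^{2d_N}=q^{2d_0}$ when $j\equiv N\equiv 0$, and fixes $v_j$ otherwise. Likewise $\mathrm{h}_{r+1}$ multiplies $v_j$ by $q^{2d_{r+1}}$ exactly when $j\equiv r+1$. For $1\le a\le r$, the element $\mathrm{h}_a=q^{d_a\epsilon_a^\vee+d_{N-a}\epsilon_{N-a}^\vee}$ acts by $q^{d_a}$ on $v_j$ with $j\equiv a$, and by $q^{d_{N-a}}$ on $v_j$ with $j\equiv N-a\equiv -a$. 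By \eqref{didni}, $d_{N-a}=d_a$, and this gives the stated formula. Note that $a\not\equiv -a \pmod N$ for $1\le a\le r$, since $N=2r+2$, so the two cases do not overlap.

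\textbf{The elements $B_i$.} First, $F_iv_j=v_{j+1}$ if $j\equiv i$, and $F_iv_j=0$ otherwise. Next, $E_{\tau i}v_j=v_{j-1}$ exactly when $j-1\equiv \tau i$. The key observation is that for every $i\in\hat I$ we have $\tau i\equiv -1-i \pmod N$: this holds for $0\le i\le r$ by hypothesis, and for the remaining $i$ by $\tau^2=\mathrm{id}$. So the condition $j-1\equiv\tau i$ is equivalent to $j\equiv -i$. On such $v_j$, the element $K_i^{-1}$ acts by a scalar read off from the formula for $K_i$ in the proof of Lemma \ref{uvactoni}. This scalar is $1$ unless $-i\equiv i$ or $-i\equiv i+1$. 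The second congruence would force $2i+1\equiv 0$, which is impossible since $N$ is even. The first congruence $2i\equiv0$ holds exactly for $i=0$ and $i=r+1=m+n+1$.

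The two special indices are where the real checking lies. For $i\neq 0,m+n+1$, the residues $i$ and $-i$ are distinct and $K_i^{-1}$ acts trivially on $v_j$ with $j\equiv -i$. This yields the two-case formula. For $i\in\{0,m+n+1\}$, we have $\tau i=i-1$ modulo $N$, and both terms of $B_i$ act on the same $v_j$ with $j\equiv i$. Here $K_i^{-1}$ acts by $q^{-d_i}$, using $K_iv_j=q^{d_i}v_j$ for $j\equiv i$. This gives $B_iv_j=v_{j+1}+\varsigma_iq^{-d_i}v_{j-1}$. The main care needed is bookkeeping of indices modulo $N$, in particular the identification $d_0=d_N$, and checking that no Koszul sign appears. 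No sign arises because we only act on single vectors of $\mathbb{V}$, not on tensor products.
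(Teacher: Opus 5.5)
Your proposal is correct and takes the same route as the paper, which gives no separate argument beyond saying the description follows from Lemma \ref{uvactoni}: restrict the $\widehat{U}$-module structure and evaluate $\mathrm{h}_a$ and $B_i=F_i+\varsigma_iE_{\tau i}K_i^{-1}$ on basis vectors. Your bookkeeping supplies exactly the details the paper leaves implicit: $\tau i\equiv -1-i \pmod N$ for all $i$, $d_{N-a}=d_a$ from \eqref{didni}, and $K_i^{-1}$ acting nontrivially on $v_j$ with $j\equiv -i$ only when $i\in\{0,m+n+1\}$.
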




Fix $d\in \mathbb{Z}_{\geq 2}$. Let $\mathbb{H}$ be the affine Hecke algebra of type $C_{d}$ with three parameters; that is, $\mathbb{H}$ is an $\F$-algebra generated by
\begin{align*}
T_{i}\quad \text{$(0\leq i\leq d-1)$,}\quad X_{a}^{\pm 1}\quad \text{$(1\leq a\leq d)$}
\end{align*}
with the following relations: for $1\leq a, b\leq d$ and for $0\leq i, j, k\leq d-1$,

$(1)$
\begin{align*}
X_{a}X_{a}^{-1}=1=X_{a}^{-1}X_{a},\qquad X_{a}X_{b}=X_{b}X_{a}.
\end{align*}

$(2)$
\begin{align*}
&(T_{0}-Q_{0}^{-1})(T_{0}+Q_{1})=0,\qquad (T_{i}-q^{-1})(T_{i}+q)=0\quad \text{$(i\neq 0)$,}\\
&T_{k}T_{k-1}T_{k}=T_{k-1}T_{k}T_{k-1},\quad \text{$(2\leq k\leq d-1)$,}\\
&T_{0}T_{1}T_{0}T_{1}=T_{1}T_{0}T_{1}T_{0},\qquad T_{i}T_{j}=T_{j}T_{i}\quad \text{$(|i-j|>1)$.}
\end{align*}

$(3)$
\begin{align*}
&T_{0}X_{1}^{-1}T_{0}=Q_{0}^{-1}Q_{1}X_{1}+(Q_{0}^{-1}Q_{1}-1)T_{0},\\
&T_{i}X_{i}T_{i}=X_{i+1}\quad \text{$(i\neq 0)$,}\qquad T_{i}X_{j}=X_{j}T_{i}\quad \text{$(j\neq i, i+1)$.}
\end{align*}

The algebra $\mathbb{H}$ contains a subalgebra $\mathbb{H}_{A}$ generated by $T_{1},\ldots,T_{d-1}$, $X_{1}^{\pm 1},\ldots,X_{d}^{\pm 1}$, which is an affine Hecke algebra of type $A$.

The tensor space $\mathbb{V}^{\otimes d}$ has an $\F$-basis $\{M_{f}\:|\:f\in \mathbb{Z}^{d}\}$, where
\begin{align*}
M_{f} :=v_{f_1}\otimes \cdots \otimes v_{f_{d}}\in \mathbb{V}^{\otimes d}\quad \text{for $f=(f_1,\ldots, f_d)\in \mathbb{Z}^{d}$.}
\end{align*}
We introduce linear operators $z_1,\ldots, z_{d}$ which act on $\mathbb{V}^{\otimes d}$ from the right as follows:
\begin{align*}
M_{f}z_{i} :=M_{(f_1,\ldots,f_{i-1},f_{i}+N,f_{i+1},\ldots,f_{d})}.
\end{align*}
For each $\mathbf{c}=(c_1,\ldots, c_d)\in \mathbb{Z}^{d}$, we define $Z_{\mathbf{c}}=z_{1}^{c_1}\cdots z_{d}^{c_d}$. Since each $f_i\in \mathbb{Z}$ has a unique expression $f_{i}=\overline{f}_{i}+c_{i}N$ for some $c_{i}\in \mathbb{Z}$ and $\overline{f}_{i}\in \mathbb{I}$, each element $M_{f}$ has a unique expression
\begin{align*}
M_{f}=M_{\overline{f}}Z_{\mathbf{c}}.
\end{align*}

For $1\leq i\leq d-1$ and $1\leq a\leq d$, we define
\begin{align*}
M_{\overline{f}}\cdot T_{i}=\left\{
\begin{array}{ll}
\frac{(-1)^{p(\overline{f}_{i})}(q^{-1}+q)+q^{-1}-q}{2}M_{\overline{f}}&\hspace{0.35cm}\text{if $\overline{f}_{i}=\overline{f}_{i+1}$},\\[0.3em]
(-1)^{p(\overline{f}_{i})p(\overline{f}_{i+1})}M_{\overline{f}\cdot s_{i}}&\hspace{0.35cm}\text{if $\overline{f}_{i}<\overline{f}_{i+1}$},\\[0.3em]
(-1)^{p(\overline{f}_{i})p(\overline{f}_{i+1})}M_{\overline{f}\cdot s_{i}}+(q^{-1}-q)M_{\overline{f}}&\hspace{0.35cm}\text{if $\overline{f}_{i}>\overline{f}_{i+1}$}.
\end{array}
\right.
\end{align*}

\begin{align*}
M_{f}\cdot X_{a}=M_{f}z_{a}^{-1}.
\end{align*}

For each $\mathbf{c}=(c_1,\ldots, c_d)\in \mathbb{Z}^{d}$, we define $X_{\mathbf{c}} :=X_{1}^{c_1}\cdots X_{d}^{c_d}$. An inductive argument gives the following formula
\begin{align}\label{xcti}
X_{\mathbf{c}}^{-1}T_{i}=T_{i}X_{\mathbf{c}\cdot s_{i}}^{-1}+(q-q^{-1})\frac{X_{i+1}(X_{\mathbf{c}}^{-1}-X_{\mathbf{c}\cdot s_{i}}^{-1})}{X_{i}-X_{i+1}}.
\end{align}
Applying \eqref{xcti}, we have
\begin{align*}
M_{f}\cdot T_{i}=&(M_{\overline{f}}\cdot X_{\mathbf{c}}^{-1})\cdot T_{i}=M_{\overline{f}}\cdot (X_{\mathbf{c}}^{-1}T_{i})\\
=&(M_{\overline{f}}\cdot T_{i})\cdot X_{\mathbf{c}\cdot s_{i}}^{-1}+(q-q^{-1})M_{\overline{f}}\cdot\frac{X_{i+1}(X_{\mathbf{c}}^{-1}-X_{\mathbf{c}\cdot s_{i}}^{-1})}{X_{i}-X_{i+1}},
\end{align*}
and therefore
\begin{align}\label{actions}
M_{f}\cdot T_{i}=\left\{
\begin{array}{ll}
\frac{(-1)^{p(\overline{f}_{i})}(q^{-1}+q)+q^{-1}-q}{2}M_{\overline{f}}Z_{\mathbf{c}\cdot s_{i}}+(q-q^{-1})M_{\overline{f}}\frac{z_{i}(Z_{\mathbf{c}}-Z_{\mathbf{c}\cdot s_{i}})}{z_{i+1}-z_{i}}
&\hspace{0.35cm}\text{if $\overline{f}_{i}=\overline{f}_{i+1}$},\\[0.3em]
(-1)^{p(\overline{f}_{i})p(\overline{f}_{i+1})}M_{\overline{f}\cdot s_{i}}Z_{\mathbf{c}\cdot s_{i}}+(q-q^{-1})M_{\overline{f}}\frac{z_{i}(Z_{\mathbf{c}}-Z_{\mathbf{c}\cdot s_{i}})}{z_{i+1}-z_{i}}&\hspace{0.35cm}\text{if $\overline{f}_{i}<\overline{f}_{i+1}$},\\[0.3em]
(-1)^{p(\overline{f}_{i})p(\overline{f}_{i+1})}M_{\overline{f}\cdot s_{i}}Z_{\mathbf{c}\cdot s_{i}}+(q-q^{-1})M_{\overline{f}}\frac{z_{i}Z_{\mathbf{c}}- z_{i+1}Z_{\mathbf{c}\cdot s_{i}}}{z_{i+1}-z_{i}}
&\hspace{0.35cm}\text{if $\overline{f}_{i}>\overline{f}_{i+1}$}.
\end{array}
\right.
\end{align}


From relations (3), we have, for any $c\in \mathbb{Z}$,
\begin{align}\label{xact0}
X_{1}^{-c}T_{0}=T_{0}X_{1}^{c}+(Q_{1}-Q_{0}^{-1})\frac{X_{1}(X_{1}^{-c}-X_{1}^{c})}{X_{1}^{-1}-X_{1}}+(Q_{0}^{-1}Q_{1}-1)\frac{X_{1}^{-c}-X_{1}^{c}}{X_{1}^{-1}-X_{1}}.
\end{align}
Recall that each $f_1\in \mathbb{Z}$ has a unique expression $f_{1}=\overline{f}_{1}+jN$ for some $j\in \mathbb{Z}$ and $\overline{f}_{1}\in \mathbb{I}$. We define
\begin{align*}
&(v_{\overline{f}_{1}}\otimes v_{f_2}\otimes\cdots \otimes v_{f_{d}})\cdot T_{0}\\
=&\left\{
\begin{array}{ll}
(-1)^{p(\overline{f}_{1})}v_{-\overline{f}_{1}}\otimes v_{f_2}\otimes\cdots \otimes v_{f_{d}}&\hspace{0.35cm}\text{if $0<\overline{f}_{1}\leq m+n+1$},\\[0.3em]
Q_{0}^{-1}v_{0}\otimes v_{f_2}\otimes\cdots \otimes v_{f_{d}}&\hspace{0.35cm}\text{if $\overline{f}_{1}=0$},\\[0.3em]
(-1)^{p(\overline{f}_{1})}Q_{0}^{-1}Q_{1}v_{-\overline{f}_{1}}\otimes v_{f_2}\otimes\cdots \otimes v_{f_{d}}+(Q_{0}^{-1}-Q_{1})v_{\overline{f}_{1}}\otimes v_{f_2}\otimes\cdots \otimes v_{f_{d}}
&\hspace{0.35cm}\text{if $-m-n\leq \overline{f}_{1}<0$}.
\end{array}
\right.
\end{align*}
Applying \eqref{xact0}, we obtain
\begin{align*}
M_{f}\cdot T_{0}=&((v_{\overline{f}_{1}}\otimes v_{f_2}\otimes\cdots \otimes v_{f_{d}})\cdot X_{1}^{-j})\cdot T_{0}=(v_{\overline{f}_{1}}\otimes v_{f_2}\otimes\cdots \otimes v_{f_{d}})\cdot (X_{1}^{-j}T_{0})\\
=&((v_{\overline{f}_{1}}\otimes v_{f_2}\otimes\cdots \otimes v_{f_{d}})\cdot T_{0})\cdot X_{1}^{j}+(Q_{1}-Q_{0}^{-1})(v_{\overline{f}_{1}}\otimes v_{f_2}\otimes\cdots \otimes v_{f_{d}})\cdot\frac{X_{1}(X_{1}^{-j}-X_{1}^{j})}{X_{1}^{-1}-X_{1}}\\
&+(Q_{0}^{-1}Q_{1}-1)(v_{\overline{f}_{1}}\otimes v_{f_2}\otimes\cdots \otimes v_{f_{d}})\cdot\frac{X_{1}^{-j}-X_{1}^{j}}{X_{1}^{-1}-X_{1}},
\end{align*}
and hence
\begin{align*}
&M_{f}\cdot T_{0}=\\
&(-1)^{p(\overline{f}_{1})}(v_{-\overline{f}_{1}}\otimes v_{f_2}\otimes\cdots \otimes v_{f_{d}})z_{1}^{-j}+(Q_{1}-Q_{0}^{-1})(v_{\overline{f}_{1}}\otimes v_{f_2}\otimes\cdots \otimes v_{f_{d}})(\sum_{l=1}^{j}z_{1}^{j-2l})\\
&+(Q_{0}^{-1}Q_{1}-1)(v_{\overline{f}_{1}}\otimes v_{f_2}\otimes\cdots \otimes v_{f_{d}})(\sum_{l=1}^{j}z_{1}^{j+1-2l})\hspace{0.1cm}\quad\text{if $0<\overline{f}_{1}\leq m+n+1$ and $j\geq 0$},\\[0.3em]
\end{align*}
\begin{align*}
&M_{f}\cdot T_{0}=\\
&(-1)^{p(\overline{f}_{1})}(v_{-\overline{f}_{1}}\otimes v_{f_2}\otimes\cdots \otimes v_{f_{d}})z_{1}^{-j}+(Q_{0}^{-1}-Q_{1})(v_{\overline{f}_{1}}\otimes v_{f_2}\otimes\cdots \otimes v_{f_{d}})(\sum_{l=1}^{-j}z_{1}^{-j-2l})\\
&+(1-Q_{0}^{-1}Q_{1})(v_{\overline{f}_{1}}\otimes v_{f_2}\otimes\cdots \otimes v_{f_{d}})(\sum_{l=1}^{-j}z_{1}^{-j+1-2l})\hspace{0.1cm}\quad\text{if $0<\overline{f}_{1}\leq m+n+1$ and $j<0$},\\[0.3em]
\end{align*}
\begin{align*}
&M_{f}\cdot T_{0}=\\
&Q_{0}^{-1}(v_{0}\otimes v_{f_2}\otimes\cdots \otimes v_{f_{d}})z_{1}^{-j}+(Q_{1}-Q_{0}^{-1})(v_{0}\otimes v_{f_2}\otimes\cdots \otimes v_{f_{d}})(\sum_{l=1}^{j}z_{1}^{j-2l})\\
&+(Q_{0}^{-1}Q_{1}-1)(v_{0}\otimes v_{f_2}\otimes\cdots \otimes v_{f_{d}})(\sum_{l=1}^{j}z_{1}^{j+1-2l})\hspace{0.1cm}\quad\text{if $\overline{f}_{1}=0$ and $j\geq 0$},\\[0.3em]
\end{align*}
\begin{align*}
&M_{f}\cdot T_{0}=\\
&Q_{0}^{-1}(v_{0}\otimes v_{f_2}\otimes\cdots \otimes v_{f_{d}})z_{1}^{-j}+(Q_{0}^{-1}-Q_{1})(v_{0}\otimes v_{f_2}\otimes\cdots \otimes v_{f_{d}})(\sum_{l=1}^{-j}z_{1}^{-j-2l})\\
&+(1-Q_{0}^{-1}Q_{1})(v_{0}\otimes v_{f_2}\otimes\cdots \otimes v_{f_{d}})(\sum_{l=1}^{-j}z_{1}^{-j+1-2l})\hspace{0.1cm}\quad\text{if $\overline{f}_{1}=0$ and $j< 0$},\\[0.3em]
\end{align*}
\begin{align*}
&M_{f}\cdot T_{0}=\\
&(-1)^{p(\overline{f}_{1})}Q_{0}^{-1}Q_{1}(v_{-\overline{f}_{1}}\otimes v_{f_2}\otimes\cdots \otimes v_{f_{d}})z_{1}^{-j}+(Q_{1}-Q_{0}^{-1})(v_{\overline{f}_{1}}\otimes v_{f_2}\otimes\cdots \otimes v_{f_{d}})(\sum_{l=1}^{j-1}z_{1}^{j-2l})\\
&+(Q_{0}^{-1}Q_{1}-1)(v_{\overline{f}_{1}}\otimes v_{f_2}\otimes\cdots \otimes v_{f_{d}})(\sum_{l=1}^{j}z_{1}^{j+1-2l})\hspace{0.1cm}\quad\text{if $-m-n\leq \overline{f}_{1}<0$ and $j> 0$},\\[0.3em]
\end{align*}
\begin{align*}
&M_{f}\cdot T_{0}=\\
&(-1)^{p(\overline{f}_{1})}Q_{0}^{-1}Q_{1}(v_{-\overline{f}_{1}}\otimes v_{f_2}\otimes\cdots \otimes v_{f_{d}})z_{1}^{-j}+(Q_{0}^{-1}-Q_{1})(v_{\overline{f}_{1}}\otimes v_{f_2}\otimes\cdots \otimes v_{f_{d}})(\sum_{l=0}^{-j}z_{1}^{-j-2l})\\
&+(1-Q_{0}^{-1}Q_{1})(v_{\overline{f}_{1}}\otimes v_{f_2}\otimes\cdots \otimes v_{f_{d}})(\sum_{l=1}^{-j}z_{1}^{-j+1-2l})\hspace{0.1cm}\quad\text{if $-m-n\leq \overline{f}_{1}<0$ and $j\leq0$},\\[0.3em]
\end{align*}
\begin{rem}
Assume that
\begin{align*}
v_{m+n+1}\cdot T_{0}=A v_{-m-n-1},\qquad v_{-m-n-1}\cdot T_{0}=B v_{m+n+1}+(Q_{0}^{-1}-Q_{1})v_{-m-n-1}.
\end{align*}
In order to make $T_{0}^{2}=Q_{0}^{-1}Q_{1}+(Q_{0}^{-1}-Q_{1})T_{0}$ valid, we must have $AB=Q_{0}^{-1}Q_{1}$. On the other hand, in order to make sure that $X_{1}T_{0}=T_{0}X_{1}^{-1}+(Q_{0}^{-1}-Q_{1})X_{1}+(1-Q_{0}^{-1}Q_{1})$, we must have $A+(1-Q_{0}^{-1}Q_{1})=B$ noting that $v_{-m-n-1}=v_{m+n+1}\cdot X_{1}$. Thus, we obtain $A=-1$ and $B=-Q_{0}^{-1}Q_{1}$, or $A=Q_{0}^{-1}Q_{1}$ and $B=1$. In \cite{FLLLWW20}, they use the latter case, while we adopt the former one here.
\end{rem}

Making use of the above formulas we can obtain the following result.
\begin{thm}
\label{the:ipreseab}
The above formula \eqref{actions} and the $T_{0}$-action formulas define an action of $\mathbb{H}$ on $\mathbb{V}^{\otimes d}$.
\end{thm}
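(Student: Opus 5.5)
We will verify the defining relations (1)--(3) of $\mathbb{H}$ directly on the basis $\{M_f\}$. The organizing idea is that the action is built from a ``finite part'' by the Bernstein-type formulas \eqref{xcti} and \eqref{xact0}. Relation (1) is immediate, since $X_a$ acts by $z_a^{-1}$ and the $z_a$ commute. We then split the verification into a type $A$ part, a type $C$ rank-one part, and the mixed braid relation.

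\textbf{Type $A$ part.} First we check that the operators $T_1,\ldots,T_{d-1}$ on the subspace spanned by the $M_{\overline{f}}$ with $\overline{f}\in\mathbb{I}^d$ satisfy the quadratic relations and the braid relations. This is the super version of the Jimbo/Mitsuhashi action, and we check it locally.
\begin{itemize}
\item The quadratic relation only involves positions $i,i+1$. The coefficient for $\overline{f}_i=\overline{f}_{i+1}$ equals $q^{-1}$ if the parity is even and $-q$ if it is odd, and in the two-dimensional case both eigenvalues $q^{-1},-q$ occur.
\item The sign $(-1)^{p(\overline f_i)p(\overline f_{i+1})}$ squares to $1$.
\item The braid relation reduces to a check on three tensor factors with at most three distinct values. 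The Koszul signs are multiplicative along both reduced words, so they agree.
\end{itemize}
Next, the formula \eqref{actions} is by construction the unique extension compatible with $X_{\mathbf c}^{-1}T_i=T_iX^{-1}_{\mathbf c\cdot s_i}+\cdots$. Hence the relations $T_iX_iT_i=X_{i+1}$ and $T_iX_j=X_jT_i$ hold on all of $\mathbb{V}^{\otimes d}$. Together with the finite relations, this shows that the module is the induced module $\mathbb{H}_A\otimes_{H_A}(\text{finite part})$, identified with $\mathbb{V}^{\otimes d}$. More precisely, we use the standard fact that $\mathbb{H}_A\cong \F[X^{\pm1}]\otimes H_A$ as vector spaces, so the formulas defining the action on $M_{\overline f}Z_{\mathbf c}$ are consistent. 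This follows \cite{FLLLWW20} verbatim, with signs inserted.

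\textbf{Rank-one type $C$ part.} Only the first tensor factor is involved, so we check on $\mathbb{V}$ that $T_0$ and $X_1$ satisfy the quadratic relation $(T_0-Q_0^{-1})(T_0+Q_1)=0$ and $T_0X_1^{-1}T_0=Q_0^{-1}Q_1X_1+(Q_0^{-1}Q_1-1)T_0$. On the span of $v_{\overline f_1}$ with $\overline f_1\in\mathbb{I}$ the quadratic relation is a $2\times2$ computation on $\{v_a,v_{-a}\}$, plus the scalar $Q_0^{-1}$ on $v_0$. The sign $(-1)^{p(\overline f_1)}$ appears twice and cancels, giving $AB=Q_0^{-1}Q_1$ as in the Remark. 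The boundary vector $v_{m+n+1}$, for which $v_{m+n+1}X_1=v_{-m-n-1}$, is handled exactly by the choice $A=-1$, $B=-Q_0^{-1}Q_1$ made in the Remark. The general formulas for $M_f\cdot T_0$ are derived from \eqref{xact0}, so the relation $X_1T_0=T_0X_1^{-1}+\cdots$ holds provided it holds for $j=\pm1$ on the fundamental domain. We check these cases directly, including the wrap-around cases $\overline f_1=\pm(m+n+1)$ and $\overline f_1=0$. Finally, $T_0$ commutes with $X_a$ for $a\ge2$ and with $T_i$ for $i\ge2$, because these act on different tensor factors. Since $T_0$ is even, no Koszul sign issues arise.

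\textbf{Mixed braid relation $T_0T_1T_0T_1=T_1T_0T_1T_0$.} We expect this to be the main obstacle. We will try to reduce it using the Bernstein presentation: it suffices to verify it on the finite part $M_{\overline f}$ with $d=2$, since the $X$-relations already hold. For this we use the known fact that the subalgebra generated by $T_0,T_1$ is a finite Hecke algebra of type $B_2$. We then check the relation on the spans of the orbits $\{v_{\pm a}\otimes v_{\pm b},v_{\pm b}\otimes v_{\pm a}\}$, splitting into cases:
\begin{itemize}
\item $|a|\ne|b|$, both nonzero;
\item $a=\pm b$;
\item $0\in\{a,b\}$;
\item the odd cases, where the Koszul sign $(-1)^{p(a)p(b)}$ and the factor $(-1)^{p(a)}$ in $T_0$ interact.
\end{itemize}
The signs enter as $(-1)^{p(a)}$ twice on each side, so they should cancel. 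The genuinely new work is the case where $v_{\pm(m+n+1)}$ appears, because $T_0$ moves $v_{m+n+1}$ out of the fundamental domain. We handle this by rewriting $v_{-m-n-1}=v_{m+n+1}z_1^{-1}$ and applying the already established $X$-relations. This reduces the verification to a finite, direct matrix computation.
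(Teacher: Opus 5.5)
Your outline is correct, but it is organized differently from the paper's proof. The paper verifies the relations by direct computation on the vectors $M_{\overline f}Z_{\mathbf c}$. It writes out the quadratic relation for $T_i$ in one case, with the $z$-polynomial terms, and calls the type $A$ braid relation, the $T_0$-quadratic relation and $T_0T_1T_0T_1=T_1T_0T_1T_0$ lengthy calculations. You instead use the fact that \eqref{xcti} and \eqref{xact0} are built into the definitions. In type $A$ this identifies $\mathbb{V}^{\otimes d}$ with $V_{\mathrm{fin}}\otimes_{H_A}\mathbb{H}_A$, so only the finite super Jimbo--Mitsuhashi relations on $\mathbb{I}^d$ need checking, with no $z$-computations. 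In type $C$ it reduces everything to finite matrix computations. Your route explains why the formulas are consistent and makes the verification finite. The paper's route is self-contained but opaque.

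The type $C$ reductions need more justification than you give. The span $\mathrm{span}\{M_{\overline f}\mid \overline f\in\mathbb{I}^d\}$ is not $T_0$-stable, since $v_{m+n+1}T_0=-v_{-m-n-1}$. So there is no induction from a finite Hecke algebra of type $C$ to fall back on.

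\emph{Rank one.} The relation $X_1T_0=T_0X_1^{-1}+(Q_0^{-1}-Q_1)X_1+(1-Q_0^{-1}Q_1)$ holds on all of $\mathbb{V}$ automatically, because the coefficients produced by \eqref{xact0} satisfy the same recursion in $j$ as in the algebra. So no check ``for $j=\pm1$'' is needed. From this relation, a two-line computation shows that $T_0^2-(Q_0^{-1}-Q_1)T_0$ commutes with $X_1$. That is what carries the quadratic relation from $\overline f_1\in\mathbb{I}$ to all $f_1$; relation (3) then follows.

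\emph{Mixed braid relation.} Saying ``since the $X$-relations already hold'' is not a reason by itself. What you need is the following: in the algebra with all relations except $T_0T_1T_0T_1=T_1T_0T_1T_0$, the difference $D$ of the two words satisfies $X^{\lambda}D=DX^{-\lambda}$. To see this:
\begin{itemize}
\item Rewrite $X^\lambda T_0T_1T_0T_1$ and $X^\lambda T_1T_0T_1T_0$ using the rank-one Bernstein relations.
\item The leading terms combine to $DX^{-\lambda}$.
\item The remainders are sums $\sum_u T_ug_u$ over alternating words $u$ of length at most $3$.
\item These remainders agree, because they agree in $\mathbb{H}$, where those $T_u$ belong to the Bernstein--Lusztig basis.
\end{itemize}
You also need that the span of the $M_g$, for $g$ in the $W(B_2)$-orbit of $\overline f\in\mathbb{I}^2$ (which lies in $[-m-n-1..m+n+1]^2$), is stable under $T_0$ and $T_1$. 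With both facts, your check becomes genuinely finite.

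Finally, drop the appeal to ``the subalgebra generated by $T_0,T_1$ is a finite Hecke algebra of type $B_2$''. That is a fact about $\mathbb{H}$, so invoking it to get a relation among your operators is circular.
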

\begin{proof}
We first verify that the $T_{i}$'s for $1\leq i\leq d-1$ satisfy the quadratic relations $(T_{i}-q^{-1})(T_{i}+q)=0$. Since $T_i$ has no effect on the positions other than the $i$ and $(i+1)$-th positions, we assume that $d=2$. When $\overline{f}_{i}=\overline{f}_{i+1}$, we have
\begin{align*}
&[(v_{\overline{f}_{i}}\otimes v_{\overline{f}_{i+1}})Z_{\mathbf{c}}]\cdot T_{i}^{2}\\
=&\bigg[\frac{(-1)^{p(\overline{f}_{i})}(q^{-1}+q)+q^{-1}-q}{2}(v_{\overline{f}_{i}}\otimes v_{\overline{f}_{i+1}})Z_{\mathbf{c}\cdot s_{i}}+(q-q^{-1})(v_{\overline{f}_{i}}\otimes v_{\overline{f}_{i+1}})\frac{z_{i}(Z_{\mathbf{c}}-Z_{\mathbf{c}\cdot s_{i}})}{z_{i+1}-z_{i}}\bigg]\cdot T_{i}\\
=&\frac{(-1)^{p(\overline{f}_{i})}(q^{-1}+q)+q^{-1}-q}{2}\bigg[\frac{(-1)^{p(\overline{f}_{i})}(q^{-1}+q)+q^{-1}-q}{2}(v_{\overline{f}_{i}}\otimes v_{\overline{f}_{i+1}})Z_{\mathbf{c}}\\
&+(q-q^{-1})(v_{\overline{f}_{i}}\otimes v_{\overline{f}_{i+1}})\frac{z_{i}(Z_{\mathbf{c}\cdot s_{i}}-Z_{\mathbf{c}})}{z_{i+1}-z_{i}}\bigg]\\
&+(q-q^{-1})\bigg[\frac{(-1)^{p(\overline{f}_{i})}(q^{-1}+q)+q^{-1}-q}{2}(v_{\overline{f}_{i}}\otimes v_{\overline{f}_{i+1}})\frac{z_{i+1}(Z_{\mathbf{c}\cdot s_{i}}-Z_{\mathbf{c}})}{z_{i}-z_{i+1}}\\
&+(q-q^{-1})(v_{\overline{f}_{i}}\otimes v_{\overline{f}_{i+1}}) \frac{z_{i}\cdot \frac{z_{i}(Z_{\mathbf{c}}-Z_{\mathbf{c}\cdot s_{i}})}{z_{i+1}-z_{i}}- z_{i}\cdot \frac{z_{i+1}(Z_{\mathbf{c}\cdot s_{i}}-Z_{\mathbf{c}})}{z_{i}-z_{i+1}}}{z_{i+1}-z_{i}}\bigg]\\
=&(v_{\overline{f}_{i}}\otimes v_{\overline{f}_{i+1}})Z_{\mathbf{c}}+(q^{-1}-q)\cdot\bigg[\frac{(-1)^{p(\overline{f}_{i})}(q^{-1}+q)+q^{-1}-q}{2}(v_{\overline{f}_{i}}\otimes v_{\overline{f}_{i+1}})Z_{\mathbf{c}\cdot s_{i}}\\
&+(q-q^{-1})(v_{\overline{f}_{i}}\otimes v_{\overline{f}_{i+1}})\frac{z_{i}(Z_{\mathbf{c}}-Z_{\mathbf{c}\cdot s_{i}})}{z_{i+1}-z_{i}}\bigg]\\
=&[(v_{\overline{f}_{i}}\otimes v_{\overline{f}_{i+1}})Z_{\mathbf{c}}]\cdot (1+(q^{-1}-q)T_{i}).
\end{align*}
The cases when $\overline{f}_{i}< \overline{f}_{i+1}$ or $\overline{f}_{i}> \overline{f}_{i+1}$ can be proved similarly.

The braid relation $T_{i}T_{i+1}T_{i}=T_{i+1}T_{i}T_{i+1}$, for $1\leq i\leq d-2$, follows from a lengthy calculation.

The relations $(T_{0}-Q_{0}^{-1})(T_{0}+Q_{1})=0$ and $T_{0}T_{1}T_{0}T_{1}=T_{1}T_{0}T_{1}T_{0}$ also hold by a lengthy calculation.
\end{proof}

For $f=(f_1,\ldots, f_d)\in \mathbb{Z}^{d}$, define
\begin{align*}
\gamma_{a}(f) :=\#\{1\leq s\leq d\:|\:f_{s}\equiv a ~~~~(\text{mod }N)\}.
\end{align*}
For $i \in \hat{I}$, $t\in \mathbb{Z}$ and $1\leq a\leq d$, put
\begin{align*}
\kappa_{i}(t) :=q^{d_{i}\delta_{t\equiv i}-d_{i+1}\delta_{t\equiv i+1}} \quad \text{ and }\quad\epsilon_{i, a}(f) :=(-1)^{|\alpha_i|\sum_{s< a}p(f_{s})}.
\end{align*}

Using the iterated coproduct, we can define an action of $\widehat{U}$ on $\mathbb{V}^{\otimes d}$ from the left as follows.
\begin{lem}\label{auivdaddu}
The following defines an action of $\widehat{U}$ on $\mathbb{V}^{\otimes d}$: for $i \in \hat{I}$ and $M_{f}\in \mathbb{V}^{\otimes d}$,
\begin{align*}
E_{i}M_{f}=\sum\limits_{1\leq a\leq d\atop f_{a}\equiv i+1}\epsilon_{i, a}(f)\Big(\prod_{r< a}\kappa_{i}(f_{r})\Big)M_{f-e_{a}},
\end{align*}
\begin{align*}
F_{i}M_{f}=\sum\limits_{1\leq a\leq d\atop f_{a}\equiv i}\epsilon_{i, a}(f)\Big(\prod_{r> a}\kappa_{i}(f_{r})^{-1}\Big)M_{f+e_{a}},
\end{align*}
\begin{align*}
q^{\epsilon_{i}^{\vee}}M_{f}=q^{\sum\limits_{j=1}^{d} \delta_{f_{j}\equiv i}}M_{f}=q^{\gamma_{i}(f)}M_{f},
\end{align*}
where $M_{f\pm e_{a}}=v_{f_1}\otimes \cdots \otimes v_{f_{a}\pm 1}\otimes\cdots\otimes v_{f_{d}}$.
\end{lem}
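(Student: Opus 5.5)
The plan is to realize the stated formulas as the $d$-fold tensor power of the module $\mathbb{V}$ of Lemma \ref{uvactoni}, computed through the iterated coproduct with the Koszul sign rule. Then nothing needs to be verified relation by relation. Since $\Delta$ is a homomorphism of superalgebras, the iterate $\Delta^{(d-1)}\colon \widehat{U}\to \widehat{U}^{\otimes d}$ is one as well. The $d$-fold super tensor product of $\widehat{U}$-modules is again a $\widehat{U}$-module, where $(x_1\otimes\cdots\otimes x_d)(w_1\otimes\cdots\otimes w_d)=(-1)^{\sum_{s<t}|x_t||w_s|}x_1w_1\otimes\cdots\otimes x_dw_d$. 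So it suffices to check that $\Delta^{(d-1)}$ applied to each generator, then acted on $M_f$, gives exactly the displayed formulas.

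First, by induction on $d$ using $\Delta(E_i)=E_i\otimes 1+K_i\otimes E_i$, I get
\[
\Delta^{(d-1)}(E_i)=\sum_{a=1}^{d}K_i^{\otimes (a-1)}\otimes E_i\otimes 1^{\otimes(d-a)}.
\]
Similarly, $\Delta(F_i)=F_i\otimes K_i^{-1}+1\otimes F_i$ gives
\[
\Delta^{(d-1)}(F_i)=\sum_{a=1}^{d}1^{\otimes(a-1)}\otimes F_i\otimes (K_i^{-1})^{\otimes(d-a)},
\]
and $\Delta^{(d-1)}(q^h)=(q^h)^{\otimes d}$. I apply the $a$-th summand to $M_f=v_{f_1}\otimes\cdots\otimes v_{f_d}$. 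The odd element $E_i$ or $F_i$, of parity $|\alpha_i|$, sits in slot $a$; moving it past $v_{f_1},\dots,v_{f_{a-1}}$ produces the sign $(-1)^{|\alpha_i|\sum_{s<a}p(f_s)}=\epsilon_{i,a}(f)$. The Cartan factors $K_i^{\pm1}$ are even and contribute no sign.

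Next I read off the scalars from the proof of Lemma \ref{uvactoni}. There $K_iv_t=\kappa_i(t)v_t$, because $K_i$ acts on $v_t$ by $q^{d_i}$ when $t\equiv i$, by $q^{-d_{i+1}}$ when $t\equiv i+1$, and trivially otherwise. Hence the $a$-th summand of $E_i$ gives $\prod_{r<a}\kappa_i(f_r)$ times $E_iv_{f_a}$. By Lemma \ref{uvactoni}, $E_iv_{f_a}$ equals $v_{f_a-1}$ if $f_a\equiv i+1$ and is $0$ otherwise. The same reasoning gives the $F_i$ formula with $\prod_{r>a}\kappa_i(f_r)^{-1}$. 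The formula for $q^{\epsilon_i^\vee}$ is immediate, since each factor contributes $q$ exactly when $f_j\equiv i$.

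I do not expect a real obstacle. The only delicate point is sign bookkeeping. One must confirm that the Koszul convention in the tensor product of modules is compatible with the Hopf superalgebra structure, that is, that $\Delta$ is a superalgebra map into the super tensor product $\widehat{U}\otimes\widehat{U}$. One must also check that the sign comes only from passing the odd generator over the earlier tensor factors, and not over the later ones, which the even $K_i^{-1}$ factors occupy. Beyond this, it is only a matter of matching the index conventions modulo $N$ with those of Lemma \ref{uvactoni}.
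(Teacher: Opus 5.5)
Your proposal is correct and is essentially the paper's proof. The paper likewise writes down $\Delta^{(d-1)}(E_i)=\sum_{a}K_i^{\otimes(a-1)}\otimes E_i\otimes 1^{\otimes(d-a)}$ and $\Delta^{(d-1)}(F_i)=\sum_{a}1^{\otimes(a-1)}\otimes F_i\otimes (K_i^{-1})^{\otimes(d-a)}$ and reads the formulas off Lemma \ref{uvactoni}; your tracking of the Koszul sign $\epsilon_{i,a}(f)$ and of $K_i v_t=\kappa_i(t)v_t$ supplies exactly the details the paper leaves implicit.
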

\begin{proof}
They follow from
\begin{align*}
\Delta^{(d-1)}(E_{i})=&\sum\limits_{a=1}^{d}(K_{i})^{\otimes(a-1)}\otimes E_{i}\otimes 1^{\otimes(d-a)},\quad \Delta^{(d-1)}(F_{i})=&\sum\limits_{a=1}^{d}1^{\otimes(a-1)}\otimes F_{i}\otimes (K_{i}^{-1})^{\otimes(d-a)}.
\end{align*}
\end{proof}

Moreover, we have the following result.
\begin{prop}\label{hatu}
The left action of $\widehat{U}$ and the right one of $\mathbb{H}_{A}$ on $\mathbb{V}^{\otimes d}$ commute.
\end{prop}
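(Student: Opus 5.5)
Since $\mathbb{H}_{A}$ is generated by $T_1,\ldots,T_{d-1}$ and $X_a^{\pm1}$, and $\widehat{U}$ by $E_i,F_i$ ($i\in\hat I$) and $q^{h}$ ($h\in P^\vee$), it suffices to check that each left generator commutes with each right generator on every basis vector $M_f$.

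\emph{The $X_a^{\pm1}$ and the Cartan part.} The $X_a^{\pm1}$ are easy. $M_f\cdot X_a=M_{f-Ne_a}$ shifts one index by a multiple of $N$. The formulas of Lemma \ref{auivdaddu} depend on $f$ only through the residues $f_s \bmod N$, via $\kappa_i(f_s)$, $p(f_s)$ and the condition $f_a\equiv i$ or $i+1$, and they shift a single coordinate by $\pm1$. Hence $E_i,F_i,q^{\epsilon_j^\vee}$ commute with $z_a^{\pm1}$, and so with $X_a^{\pm1}$. Likewise $q^{\epsilon_j^\vee}$ acts by the scalar $q^{\gamma_j(f)}$, which is invariant under permuting the residues. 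Each summand in \eqref{actions} involves only $M_{g}$ with $g$ a permutation of $f$ in positions $i,i+1$, up to $N$-shifts, so $q^{h}$ commutes with $T_i$.

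\emph{Reduction to $d=2$.} For $T_i$ with $1\le i\le d-1$ against $E_j,F_j$, I reduce to $d=2$. By Lemma \ref{auivdaddu}, the action of $E_j$ (resp. $F_j$) on positions $a\notin\{i,i+1\}$ involves the factors $\epsilon_{j,a}(f)$ and $\prod\kappa_j(f_r)$. The factor $\epsilon_{j,a}$ depends only on the total parity of $f_i,f_{i+1}$ when $a>i+1$, and $\prod\kappa_j$ depends only on the multiset $\{f_i,f_{i+1}\}$ mod $N$. Both are preserved by every term of $M_f\cdot T_i$, since $T_i$ preserves the multiset of residues in positions $i,i+1$ (the $z$-factors do not change residues). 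So those summands commute with $T_i$. It then remains to treat the action of $\Delta(E_j)=E_j\otimes1+K_j\otimes E_j$ on $\mathbb{V}\otimes\mathbb{V}$ against $T_1$. Here I keep the overall sign $(-1)^{|\alpha_j|(p(f_1)+\cdots+p(f_{i-1}))}$ and the $\kappa$ prefactor as a common scalar. The case of $F_j$ is parallel, or follows by a symmetry argument.

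\emph{The $d=2$ check and the main obstacle.} I would first check the finite part: the operator $\mathcal{T}$ given by the non-$z$ part of the definition of $M_{\overline f}\cdot T_i$. On the span of $v_a\otimes v_b$ with $a,b\in\mathbb{I}$, this is the standard super $R$-matrix/Hecke operator of Mitsuhashi/Moon type. It commutes with $U_q(\mathfrak{gl}(m'|n'))$ by the known finite super Schur--Weyl duality. I would nonetheless verify it directly by cases on whether $\{\overline f_1,\overline f_2\}$ meets $\{j,j+1\}$ in zero, one or two elements, and on parities. This is where the choice $\frac{(-1)^{p}(q^{-1}+q)+q^{-1}-q}{2}$ (giving $q^{-1}$ for even and $-q$ for odd equal indices) and the Koszul sign $(-1)^{p(\overline f_1)p(\overline f_2)}$ must interact correctly with $d_j=(-1)^{|\epsilon_j|}$ in $K_j$.

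The affine part is then handled via \eqref{xcti}. Write $M_f=M_{\overline f}Z_{\mathbf c}$ and $T_i$ as $X_{\mathbf c}^{-1}T_i=T_iX^{-1}_{\mathbf c s_i}+(q-q^{-1})\,(\text{polynomial in }X)$. The correction term lies in $\F[X^{\pm1}]$, which commutes with $\widehat U$ by the first step. So commutation with $T_i$ on $M_f$ reduces to commutation on $M_{\overline f}$ with $\overline f\in\mathbb{I}^2$. The complication is that $E_0,F_0$ (the affine generators) move $v_{\overline f}$ across the boundary of $\mathbb{I}$, namely $v_{m+n+1}\leftrightarrow v_{m+n+2}=v_{-m-n-1}z$. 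There the order of the bars can flip, so the finite computation for the $\mathfrak{gl}$ part does not directly apply. This boundary case, $j\equiv m+n+1$, is the step I expect to be the main obstacle. I would handle it by explicitly writing both sides for the few configurations where $\overline f_1$ or $\overline f_2$ equals $m+n+1$ or $-m-n-1$, and using \eqref{actions}, including its $z$-correction terms, to match them.
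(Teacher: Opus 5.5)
The paper gives no proof of Proposition \ref{hatu}. It is simply asserted as the super analogue of the known affine type $A$ duality. Your plan therefore supplies an argument the paper omits, and it is sound.

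\textbf{The reductions are valid.}
\begin{itemize}
\item The formulas of Lemma \ref{auivdaddu} see only residues mod $N$, and \eqref{actions} preserves the residue multiset in positions $i,i+1$.
\item The $d=2$ reduction has the common prefactor $(-1)^{|\alpha_j|\sum_{s<i}p(f_s)}\prod_{r<i}\kappa_j(f_r)$ for $E_j$, and the analogous one with $\prod_{r>i+1}\kappa_j(f_r)^{-1}$ for $F_j$.
\item The passage from $M_f$ to $M_{\overline f}$ via \eqref{xcti} is legitimate. The action \eqref{actions} is exactly the module induced from the finite Hecke algebra $H_f=\langle T_1,\dots,T_{d-1}\rangle$ acting on $\mathrm{span}\{M_{\overline f}:\overline f\in\mathbb{I}^d\}$. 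Hence \eqref{xcti} holds as an operator identity on every vector, including vectors such as $E_{m+n+1}M_{\overline f}$ that leave that span.
\item Your finite case analysis goes through for arbitrary parities of $j,j+1$.
\end{itemize}
This is where your route improves on the paper's bare assertion: it actually checks the Koszul signs $\epsilon_{i,a}(f)$, the eigenvalue $-q$ on odd diagonal tensors, and the nonstandard parity sequence $\overline1^{\,n}\overline0^{\,2m+1}\overline1^{\,n+1}$ along $\mathbb{I}$. For the same reason, citing Mitsuhashi--Moon for the finite part would need a convention check; your fallback of direct verification is safer.

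\textbf{Two slips in your last paragraph.}
\begin{itemize}
\item The generators that cross the boundary of $\mathbb{I}=[-m-n..m+n+1]$ are $E_{m+n+1},F_{m+n+1}$, not $E_0,F_0$. For example, $E_0v_1=v_0$ stays inside $\mathbb{I}$.
\item $v_{m+n+2}=v_{-m-n}z$, whereas $v_{-m-n-1}z=v_{m+n+1}$. So the boundary configurations are those with $\overline f_a\in\{m+n+1,-m-n\}$.
\end{itemize}

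With that corrected, the boundary check closes quickly. Since $p(m+n+1)=p(-m-n)=\overline 1$, the root $\alpha_{m+n+1}$ is even. Hence $\Delta(E_{m+n+1})$ carries no Koszul sign, and $K_{m+n+1}$ acts by $q^{-1}$ on $v_{m+n+1}$ and by $q$ on $v_{-m-n}$. For instance, on $v_{-m-n}\otimes v_{-m-n}$ both orders of applying $E_{m+n+1}$ and $T_1$ give
\[
-q\,(v_{m+n+1}\otimes v_{-m-n})z_1^{-1}-q^{2}\,(v_{-m-n}\otimes v_{m+n+1})z_2^{-1}.
\]
The coefficients match because of the $z$-correction term of \eqref{actions} on $(v_{-m-n}\otimes v_{m+n+1})z_2^{-1}$. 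The remaining configurations are equally short: $v_{m+n+1}\otimes v_{-m-n}$, $v_{-m-n}\otimes v_{m+n+1}$, $v_x\otimes v_{-m-n}$ and $v_{-m-n}\otimes v_x$ with $-m-n<x<m+n+1$, together with the $F_{m+n+1}$ analogues.
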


By restriction we obtain an action of $\widehat{U}^{\jmath}$ on $\mathbb{V}^{\otimes d}$.
\begin{lem}\label{auivd}
The following defines an action of $\widehat{U}^{\jmath}$ on $\mathbb{V}^{\otimes d}$: for $0\leq a\leq m+n+1$, $i \in \hat{I}$ and $M_{f}\in \mathbb{V}^{\otimes d}$,
\begin{align}\label{hamf}
\mathrm{h}_{a}M_{f}=q^{d_{a}(\gamma_{a}(f)+\gamma_{-a}(f))}M_{f},
\end{align}
and
\begin{align}\label{bimf}
B_{i}M_{f}=&\sum\limits_{1\leq a\leq d\atop f_{a}\equiv i}\epsilon_{i, a}(f)\Big(\prod_{r> a}\kappa_{i}(f_{r})^{-1}\Big)M_{f+e_{a}}\\
&\notag+\varsigma_{i}\sum\limits_{1\leq a\leq d\atop f_{a}\equiv -i}\epsilon_{i, a}(f)\Big(\prod_{r< a}\kappa_{i}(f_{r})^{-1}\kappa_{\tau i}(f_{r})\Big)\kappa_{i}(f_{a})^{-1}\Big(\prod_{r> a}\kappa_{i}(f_{r})^{-1}\Big)M_{f-e_{a}}.
\end{align}
\end{lem}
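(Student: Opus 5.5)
Since $\widehat{U}^{\jmath}$ is a subalgebra of $\widehat{U}$, the action in Lemma \ref{auivd} is simply the restriction of the $\widehat{U}$-action of Lemma \ref{auivdaddu}. So the statement reduces to computing the operators $\mathrm{h}_a$ and $B_i=F_i+\varsigma_iE_{\tau i}K_i^{-1}$ on a basis vector $M_f$. No new relations need to be checked, because the module axioms are inherited from $\widehat{U}$.

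\emph{Cartan part.} By Lemma \ref{auivdaddu}, $q^{\epsilon_j^\vee}M_f=q^{\gamma_j(f)}M_f$. For $1\leq a\leq r$ we have $\mathrm{h}_a=q^{d_a\epsilon_a^\vee+d_{N-a}\epsilon_{N-a}^\vee}$, and \eqref{didni} gives $d_{N-a}=d_a$. Since $N-a\equiv -a \pmod N$, the eigenvalue is $q^{d_a(\gamma_a(f)+\gamma_{-a}(f))}$. For $a=0$ and $a=r+1$ we have $a\equiv -a\pmod N$, so $\gamma_a=\gamma_{-a}$ and $2d_a\gamma_a=d_a(\gamma_a+\gamma_{-a})$. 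This gives \eqref{hamf}.

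\emph{The $F_i$ term.} This is copied verbatim from Lemma \ref{auivdaddu} and gives the first sum in \eqref{bimf}.

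\emph{The $E_{\tau i}K_i^{-1}$ term.} Since $K_i^{-1}$ is even and group-like, it acts diagonally on $M_f$ by $\prod_r\kappa_i(f_r)^{-1}$. Next apply $E_{\tau i}$ using Lemma \ref{auivdaddu}. It lowers a factor $f_a\equiv \tau i+1$, and from $\tau i\equiv -1-i$ we get $\tau i+1\equiv -i$, which matches the index condition $f_a\equiv -i$. Lowering $f_a$ contributes the factor $\prod_{r<a}\kappa_{\tau i}(f_r)$, and it also contributes the sign $(-1)^{|\alpha_{\tau i}|\sum_{s<a}p(f_s)}$. This sign equals $\epsilon_{i,a}(f)$, because $\tau$ is parity-preserving. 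Combining with the diagonal scalar $\prod_r\kappa_i(f_r)^{-1}$, split as $r<a$, $r=a$, $r>a$, gives exactly the second sum in \eqref{bimf}. Here $\kappa_i(f_a)$ is evaluated at the original $f_a$, since $K_i^{-1}$ acts before $E_{\tau i}$.

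\emph{Main obstacle.} The step needing care is sign bookkeeping in the super setting. When the tensor-product action is expanded via $\Delta^{(d-1)}$, passing an odd generator past earlier factors produces Koszul signs. The point is to confirm that the $K_i^{-1}$ factor contributes no sign (it is even) and that $|\alpha_{\tau i}|=|\alpha_i|$, so that both sums carry the same $\epsilon_{i,a}(f)$.

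A secondary check is consistency with Lemma \ref{uimahtvactoni} at $d=1$. For $i=0,r+1$ we have $\tau i=i$, so $-i\equiv i+1$. The term $\varsigma_i\kappa_i(f_a)^{-1}$ with $f_a\equiv i+1$ should then reproduce $\varsigma_iq^{-d_i}$ after the index relabelling. Verifying this will confirm the placement of the factor $\kappa_i(f_a)^{-1}$.
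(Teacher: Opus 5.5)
Your proposal is correct and is essentially the paper's proof. The paper writes out $\Delta^{(d-1)}(B_i)$, whose second sum is just $\varsigma_i\Delta^{(d-1)}(E_{\tau i})\Delta^{(d-1)}(K_i^{-1})$, and reads off \eqref{hamf}--\eqref{bimf} factorwise; this is the same computation as your composing $K_i^{-1}$ and then $E_{\tau i}$ via Lemma~\ref{auivdaddu}, with $\tau i+1\equiv -i \pmod N$ and $|\alpha_{\tau i}|=|\alpha_i|$ handling the index condition and the sign.

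One correction to your optional sanity check, which is otherwise mis-stated: $\tau$ has no fixed points ($\tau 0=N-1$, $\tau(r+1)=r$), and $-i\equiv i+1 \pmod N$ never holds because $N$ is even. What is special about $i=0,r+1$ is that $-i\equiv i\pmod N$, so the $E$-term acts on the same $f_a\equiv i$ as the $F$-term and $\kappa_i(f_a)^{-1}=q^{-d_i}$, exactly as in Lemma~\ref{uimahtvactoni}.
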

\begin{proof}
Since $\Delta^{(d-1)}(\mathrm{h}_{a})=\mathrm{h}_{a}^{\otimes d}$ and $d_{a}=d_{N-a}$, we obtain \eqref{hamf} by Lemma \ref{uimahtvactoni}.

It is easy to verify that the iterated coproduct gives the following formula:
\begin{align*}
\Delta^{(d-1)}(B_{i})=&\sum\limits_{a=1}^{d}1^{\otimes(a-1)}\otimes F_{i}\otimes (K_{i}^{-1})^{\otimes(d-a)}\\
&+\varsigma_{i}\sum\limits_{a=1}^{d}(K_{\tau i}K_{i}^{-1})^{\otimes(a-1)}\otimes E_{\tau i}K_{i}^{-1}\otimes (K_{i}^{-1})^{\otimes(d-a)}.
\end{align*}
By Lemma \ref{uvactoni}, we can obtain \eqref{bimf}.
\end{proof}

By restriction, we further obtain an action of $\widehat{U}^{\jmath}(\mathfrak{sl})$ on $\mathbb{V}^{\otimes d}$ from Lemma \ref{auivd}. Then we have the following result.
\begin{thm}\label{hatcuth}
Assume that the parameters of $\widehat{U}^{\jmath}$ are specialized by
$$ \varsigma_0=qQ_1^{-1}, \qquad \varsigma_{m+n+1} =q^{-1}Q_0^{-1}Q_1, \qquad \varsigma_{N-1}=Q_0,  $$
and
$$ \varsigma_i=(-1)^{|\alpha_i|} \qquad \text{for }i\in \hat{I}\setminus \{0,m+n+1,N-1\}.$$
Then on $\mathbb{V}^{\otimes d}$, the left action of $\widehat{U}^{\jmath}$, and hence the action of $\widehat{U}^{\jmath}(\mathfrak{sl})$, commutes with the right one of $\mathbb{H}$.
\end{thm}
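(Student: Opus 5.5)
Since $\mathbb{H}$ is generated by $\mathbb{H}_A$ together with $T_0$, and $\widehat{U}^{\jmath}$ is generated by the $B_i$ ($i\in\hat I$) and the $\mathrm{h}_a^{\pm1}$, it suffices to check that each of these generators of $\widehat{U}^{\jmath}$ commutes with the right action of $\mathbb{H}_A$ and with $T_0$. Commutation with $\mathbb{H}_A$ follows at once from Proposition \ref{hatu}: $\widehat{U}^{\jmath}\subset \widehat{U}$, and the action of $\widehat{U}^{\jmath}$ on $\mathbb{V}^{\otimes d}$ is the restriction given in Lemma \ref{auivd}. This holds for any choice of the parameters $\varsigma_i$. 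The statement for $\widehat{U}^{\jmath}(\mathfrak{sl})$ then follows because $\mathrm{k}_j=\mathrm{h}_j\mathrm{h}_{j+1}^{-1}$ lies in $\widehat{U}^{\jmath}$. So the whole content is commutation with $T_0$.

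For $\mathrm{h}_a$ the check is easy. By \eqref{hamf}, $\mathrm{h}_a$ acts on $M_f$ by $q^{d_a(\gamma_a(f)+\gamma_{-a}(f))}$. Every term in the formulas for $M_f\cdot T_0$ replaces $f_1$ by $\pm f_1$ modulo $N$. The quantity $\gamma_a+\gamma_{-a}$ is invariant under $f_1\mapsto -f_1$ and under shifts by $z_1^{\pm1}$, so $\mathrm{h}_a$ commutes with $T_0$.

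For $B_i$ I would reduce to small cases. $T_0$ acts only on the first tensor factor (together with $z_1$). By \eqref{bimf}, write $B_i=B_i^{(1)}\otimes(\text{Cartan part})^{\otimes(d-1)}+(\text{Cartan part})\otimes B_i^{(\geq 2)}$, using the structure of $\Delta^{(d-1)}(B_i)$ from the coideal property. The Cartan factors acting on the first slot are $K_i^{-1}$ and $K_{\tau i}K_i^{-1}=\mathrm{k}_i$. Under $f_1\mapsto -f_1 \pmod N$, $K_i^{-1}$ is not invariant, but $\mathrm{k}_i$ is \emph{anti}-invariant in a compatible way. So I would first regroup $\Delta(B_i)=B_i\otimes K_i^{-1}+\mathrm{k}_i^{-1}\otimes(\ldots)$ as in the proof of Theorem \ref{def:ipreseab}, and use the coideal form $\Delta(B_i)=B_i\otimes K_i^{-1}+1\otimes F_i+\varsigma_i\mathrm{k}_i^{-1}\otimes E_{\tau i}K_i^{-1}$. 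The factors $1$ and $\mathrm{k}_i^{-1}$ in the first slot are what must intertwine with $T_0$. For the terms whose first factor is $1$, commutation is clear. For the terms with first factor $\mathrm{k}_i^{-1}$, I need $\mathrm{k}_i^{-1}$ acting on $v_{f_1}$ and on $v_{-f_1}$ to produce matching scalars, once combined with the sign $\epsilon_{i,a}(f)$. Here I would use $d_i=d_{N-i}$ from \eqref{didni} and the parity factor $(-1)^{p(\overline f_1)}$ built into $T_0$. That factor is exactly designed to cancel the Koszul sign $(-1)^{|\alpha_i|p(f_1)}$ whenever $f_1$ changes sign while keeping its parity.

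This reduces the problem to the case $d=1$, namely that $B_i$ commutes with $T_0$ on $\mathbb{V}$, with $T_0$ acting through $z_1$. Using Lemma \ref{uimahtvactoni}, I would check this on each basis vector $v_j$, splitting into cases according to $\overline{j}$. When $\overline{j}$ is far from $0$ and $\pm(m+n+1)$, the check is a straightforward sign comparison, which forces $\varsigma_i=(-1)^{|\alpha_i|}$. The delicate cases are $\overline j\in\{0,\pm1\}$ and $\overline j\in\{\pm(m+n),\pm(m+n+1)\}$, which involve $B_0$, $B_{N-1}$ and $B_{m+n+1}$. These also require the wrap-around $v_{m+n+1}z_1^{-1}$-type identities and the geometric sums in the $T_0$-formulas for $j\ne0$.

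I expect this boundary computation to be the main obstacle. It is where the specializations $\varsigma_0=qQ_1^{-1}$, $\varsigma_{m+n+1}=q^{-1}Q_0^{-1}Q_1$ and $\varsigma_{N-1}=Q_0$ are determined, and where the choice $A=-1$, $B=-Q_0^{-1}Q_1$ of the Remark enters. My first attempt would be to verify it for $j=0$ (the shift $z_1^{0}$). I would then use the relation $X_1^{-c}T_0$ in \eqref{xact0} together with the fact that $B_i$ commutes with $z_1$ to extend to all $z_1$-shifts, rather than expanding the geometric sums directly.
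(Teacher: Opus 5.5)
Your proposal is correct, and its architecture is the paper's:
\begin{itemize}
\item Proposition \ref{hatu} disposes of $\mathbb{H}_A$.
\item The coproduct $\Delta(B_i)=B_i\otimes K_i^{-1}+1\otimes F_i+\varsigma_i\mathrm{k}_i^{-1}\otimes E_{\tau i}K_i^{-1}$ reduces the $T_0$-check to $d=1$, because $T_0$ is even and acts only on the first factor.
\item At $d=1$, a case analysis in which $B_0$, $B_{N-1}$, $B_{m+n}$ and $B_{m+n+1}$ force the specializations finishes the proof.
\end{itemize}

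The genuine difference is how you handle the shifted vectors $v_{\bar k}z^j$. The paper expands the geometric sums of the $T_0$-formulas separately for $j\ge 0$ and $j<0$ in every case. You reduce to $j=0$, and that reduction is valid.

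Let $D$ be the commutator of $B_i$ with the right action of $T_0$ on $\mathbb{V}$, and write \eqref{xact0} as $X_1^{-j}T_0=T_0X_1^{j}+P_j(X_1)$. Since $B_i$ commutes with $X_1^{\pm1}$, you get $D(v\cdot X_1^{-j})=(Dv)\cdot X_1^{j}$. So it suffices to have $Dv_{\bar k}=0$ for $\bar k\in\mathbb{I}$.

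The one input you must state is that $(v\cdot X_1^{-j})\cdot T_0=(v\cdot T_0)\cdot X_1^{j}+v\cdot P_j(X_1)$ holds for \emph{arbitrary} $v$, not only for $v=v_{\bar k}$, where it is the definition. You need this because $B_iv_{\bar k}$ can wrap around, e.g. $B_{m+n+1}v_{m+n+1}=v_{-m-n}z+\varsigma_{m+n+1}q\,v_{m+n}$. It follows from Theorem \ref{the:ipreseab}, or directly from the identity $P_{j+j'}=P_{j'}X_1^{j}+X_1^{-j'}P_j$, which the explicit $P_j$ satisfy.

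Your route makes the check finite. At $j=0$ the boundary cases $B_0v_0$, $B_{N-1}v_{\pm 1}$, $B_{m+n}v_{-m-n}$ and $B_{m+n+1}v_{m+n+1}$ close exactly with $\varsigma_0=qQ_1^{-1}$, $\varsigma_{N-1}=Q_0$, $\varsigma_{m+n}=1$ and $\varsigma_{m+n+1}=q^{-1}Q_0^{-1}Q_1$. The paper's expansion is more explicit but considerably longer.

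There are three slips in your $d=1$ reduction; none is fatal.
\begin{itemize}
\item $K_{\tau i}K_i^{-1}$ equals $\mathrm{k}_i^{-1}$, not $\mathrm{k}_i$.
\item It is \emph{invariant}, not anti-invariant, under $f_1\mapsto -f_1$. By \eqref{didni}, for $0\le i\le r$ the element $\mathrm{k}_i$ acts on $v_j$ by $q^{d_i(\delta_{j\equiv i}+\delta_{j\equiv -i})-d_{i+1}(\delta_{j\equiv i+1}+\delta_{j\equiv -i-1})}$, and $\mathrm{k}_{\tau i}=\mathrm{k}_i^{-1}$.
\item No Koszul sign needs cancelling. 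Since $p(-\bar f_1)=p(\bar f_1)$ and $T_0$ preserves the parity of the first factor, the signs $\epsilon_{i,a}(f)$ are unchanged. The factor $(-1)^{p(\bar f_1)}$ in $T_0$ matters only inside the $d=1$ check, where it pairs with $\varsigma_i=(-1)^{|\alpha_i|}$.
\end{itemize}
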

\begin{proof}
In view of Prop. \ref{hatu}, it suffices to show that the $T_{0}$-action commutes with the $\widehat{U}^{\jmath}$-action. Since
\begin{align*}
\Delta^{(d-1)}(B_{i})=&B_{i}\otimes K_{i}^{-1}\otimes \cdots \otimes K_{i}^{-1}+1\otimes (\cdots)+\varsigma_{i}K_{\tau i}K_{i}^{-1}\otimes \cdots,
\end{align*}
and $T_{0}$ has an effect only on the first position, it suffices to consider the case when $d=1$. That is, we need to show that for $i \in \hat{I}$, $k\in \mathbb{I}$, $j\in \mathbb{Z}$ and $0\leq a\leq m+n+1$,
\begin{align*}
(B_{i}\cdot v_{k}z^{j})\cdot T_{0}=B_{i}\cdot (v_{k}z^{j}\cdot T_{0}),\qquad (\mathrm{h}_{a}\cdot v_{k}z^{j})\cdot T_{0}=\mathrm{h}_{a}\cdot (v_{k}z^{j}\cdot T_{0}).
\end{align*}
From Lemma \ref{uimahtvactoni}, it is easy to see that the second equality holds. Next we verify the first one.

When $1\leq i\leq m+n$, $k=i$ and $j\geq 0$, we have
\begin{align*}
&(B_{i}\cdot v_{k}z^{j})\cdot T_{0}=v_{k+1}z^{j}\cdot T_{0}\\
=&(-1)^{p(k+1)}v_{-k-1}z^{-j}+(Q_{1}-Q_{0}^{-1})v_{k+1}\Big(\sum_{l=1}^{j}z^{j-2l}\Big)
+(Q_{0}^{-1}Q_{1}-1)v_{k+1}\Big(\sum_{l=1}^{j}z^{j+1-2l}\Big),
\end{align*}
\begin{align*}
&B_{i}\cdot (v_{k}z^{j}\cdot T_{0})\\
=&B_{i}\cdot \Big[(-1)^{p(k)}v_{-k}z^{-j}+(Q_{1}-Q_{0}^{-1})v_{k}\Big(\sum_{l=1}^{j}z^{j-2l}\Big)
+(Q_{0}^{-1}Q_{1}-1)v_{k}\Big(\sum_{l=1}^{j}z^{j+1-2l}\Big)\Big]\\
=&(-1)^{p(k)}\varsigma_i v_{-k-1}z^{-j}+(Q_{1}-Q_{0}^{-1})v_{k+1}\Big(\sum_{l=1}^{j}z^{j-2l}\Big)
+(Q_{0}^{-1}Q_{1}-1)v_{k+1}\Big(\sum_{l=1}^{j}z^{j+1-2l}\Big).
\end{align*}
Since $\varsigma_i=(-1)^{|\alpha_i|}$ for $1\leq i\leq m+n$, we have $(-1)^{p(k)}\varsigma_i=(-1)^{p(k+1)}$, and hence $(B_{i}\cdot v_{k}z^{j})\cdot T_{0}=B_{i}\cdot (v_{k}z^{j}\cdot T_{0})$. The case when $k=i$ and $j< 0$ can be verified similarly.

When $k=-i\in [-m-n+1..-1]$ and $j> 0$, we have
\begin{align*}
&(B_{i}\cdot v_{k}z^{j})\cdot T_{0}=(\varsigma_i v_{k-1}z^{j})\cdot T_{0}\\
=&\varsigma_i\Big[(-1)^{p(k-1)}Q_{0}^{-1}Q_{1}v_{-k+1}z^{-j}+(Q_{1}-Q_{0}^{-1})v_{k-1}\Big(\sum_{l=1}^{j-1}z^{j-2l}\Big)
+(Q_{0}^{-1}Q_{1}-1)v_{k-1}\Big(\sum_{l=1}^{j}z^{j+1-2l}\Big)\Big],
\end{align*}
\begin{align*}
&B_{i}\cdot (v_{k}z^{j}\cdot T_{0})\\
=&B_{i}\cdot \Big[(-1)^{p(k)}Q_{0}^{-1}Q_{1}v_{-k}z^{-j}+(Q_{1}-Q_{0}^{-1})v_{k}\Big(\sum_{l=1}^{j-1}z^{j-2l}\Big)
+(Q_{0}^{-1}Q_{1}-1)v_{k}\Big(\sum_{l=1}^{j}z^{j+1-2l}\Big)\Big]\\
=&(-1)^{p(k)}Q_{0}^{-1}Q_{1}v_{-k+1}z^{-j}+\varsigma_i(Q_{1}-Q_{0}^{-1})v_{k-1}\Big(\sum_{l=1}^{j-1}z^{j-2l}\Big)+\varsigma_i(Q_{0}^{-1}Q_{1}-1)v_{k-1}\Big(\sum_{l=1}^{j}z^{j+1-2l}\Big).
\end{align*}
Since $\varsigma_i=(-1)^{p(i)+p(i+1)}=(-1)^{p(-i)+p(-i-1)}=(-1)^{p(k)+p(k-1)}$, we have the desired equality. The case when $k=-i\in [-m-n+1..-1]$ and $j\leq 0$ can be proved similarly.

When $1\leq i\leq m+n$, $k\neq i$ or $-i$, we have $(B_{i}\cdot v_{k}z^{j})\cdot T_{0}=B_{i}\cdot (v_{k}z^{j}\cdot T_{0})=0$.

When $i=m+n$ and $k=-i$, and $j> 0$, we have
\begin{align*}
&(B_{i}\cdot v_{k}z^{j})\cdot T_{0}=(\varsigma_i v_{m+n+1}z^{j-1})\cdot T_{0}\\
=&\varsigma_i\Big[(-1)^{p(m+n+1)}v_{-m-n-1}z^{-j+1}+(Q_{1}-Q_{0}^{-1})v_{m+n+1}\Big(\sum_{l=1}^{j-1}z^{j-1-2l}\Big)
+(Q_{0}^{-1}Q_{1}-1)v_{m+n+1}\Big(\sum_{l=1}^{j-1}z^{j-2l}\Big)\Big],
\end{align*}
\begin{align*}
&B_{i}\cdot (v_{k}z^{j}\cdot T_{0})\\
=&(-1)^{p(-m-n)}Q_{0}^{-1}Q_{1}v_{m+n+1}z^{-j}+\varsigma_i(Q_{1}-Q_{0}^{-1})v_{-m-n-1}\Big(\sum_{l=1}^{j-1}z^{j-2l}\Big)\\
&+\varsigma_i(Q_{0}^{-1}Q_{1}-1)v_{-m-n-1}\Big(\sum_{l=1}^{j}z^{j+1-2l}\Big).
\end{align*}
Since $(-1)^{p(-m-n)}=(-1)^{p(m+n+1)}=-1$ and $\varsigma_{m+n}=1$, we have the desired equality. The case when $k=-i=-m-n$ and $j\leq 0$ can be proved similarly.

The case when $m+n+2\leq i\leq 2m+2n$ can be proved in a similar way as the case $1\leq i\leq m+n$.

When $k=i=0$ and $j> 0$, we have
\begin{align*}
&(B_{0}\cdot v_{0}z^{j})\cdot T_{0}=(v_{1}+\varsigma_{0}q^{-d_{0}}v_{-1})z^{j}\cdot T_{0}\\
=&(-1)^{p(1)}v_{-1}z^{-j}+(Q_{1}-Q_{0}^{-1})v_{1}\Big(\sum_{l=1}^{j}z^{j-2l}\Big)
+(Q_{0}^{-1}Q_{1}-1)v_{1}\Big(\sum_{l=1}^{j}z^{j+1-2l}\Big)\\
&+\varsigma_{0}q^{-d_{0}}\Big[(-1)^{p(-1)}Q_{0}^{-1}Q_{1}v_{1}z^{-j}+(Q_{1}-Q_{0}^{-1})v_{-1}\Big(\sum_{l=1}^{j-1}z^{j-2l}\Big)
+(Q_{0}^{-1}Q_{1}-1)v_{-1}\Big(\sum_{l=1}^{j}z^{j+1-2l}\Big)\Big],
\end{align*}
\begin{align*}
&B_{0}\cdot (v_{0}z^{j}\cdot T_{0})=B_{0}\cdot \Big[Q_{0}^{-1}v_{0}z^{-j}+(Q_{1}-Q_{0}^{-1})v_{0}\Big(\sum_{l=1}^{j}z^{j-2l}\Big)+(Q_{0}^{-1}Q_{1}-1)v_{0}\Big(\sum_{l=1}^{j}z^{j+1-2l}\Big)\Big]\\
=&Q_{0}^{-1}(v_{1}+\varsigma_{0}q^{-d_{0}}v_{-1})z^{-j}+(Q_{1}-Q_{0}^{-1})(v_{1}+\varsigma_{0}q^{-d_{0}}v_{-1})\Big(\sum_{l=1}^{j}z^{j-2l}\Big)\\
&+(Q_{0}^{-1}Q_{1}-1)(v_{1}+\varsigma_{0}q^{-d_{0}}v_{-1})\Big(\sum_{l=1}^{j}z^{j+1-2l}\Big).
\end{align*}
Since $q^{-1}\varsigma_0 Q_1=1$, we easily obtain $(B_{0}\cdot v_{0}z^{j})\cdot T_{0}=B_{0}\cdot (v_{0}z^{j}\cdot T_{0})$. The other cases can be proved similarly.

When $k=i=m+n+1$ and $j\geq 0$, we have
\begin{align*}
&(B_{i}\cdot v_{i}z^{j})\cdot T_{0}=(v_{-m-n}z+\varsigma_{i}q^{-d_{i}}v_{m+n})z^{j}\cdot T_{0}\\
=&(-1)^{p(-m-n)}Q_{0}^{-1}Q_{1}v_{m+n}z^{-j-1}+(Q_{1}-Q_{0}^{-1})v_{-m-n}\Big(\sum_{l=1}^{j}z^{j+1-2l}\Big)
+(Q_{0}^{-1}Q_{1}-1)v_{-m-n}\Big(\sum_{l=1}^{j+1}z^{j+2-2l}\Big)\\
&+\varsigma_{i}q^{-d_{i}}\Big[(-1)^{p(m+n)}v_{-m-n}z^{-j}+(Q_{1}-Q_{0}^{-1})v_{m+n}\Big(\sum_{l=1}^{j}z^{j-2l}\Big)
+(Q_{0}^{-1}Q_{1}-1)v_{m+n}\Big(\sum_{l=1}^{j}z^{j+1-2l}\Big)\Big],
\end{align*}
\begin{align*}
&B_{i}\cdot (v_{i}z^{j}\cdot T_{0})=B_{i}\cdot \Big[(-1)^{p(m+n+1)}v_{-m-n-1}z^{-j}+(Q_{1}-Q_{0}^{-1})v_{m+n+1}\Big(\sum_{l=1}^{j}z^{j-2l}\Big)\\
&+(Q_{0}^{-1}Q_{1}-1)v_{m+n+1}\Big(\sum_{l=1}^{j}z^{j+1-2l}\Big)\Big]\\
=&(-1)^{p(m+n+1)}(v_{-m-n}+\varsigma_{i}q^{-d_{i}}v_{-m-n-2})z^{-j}+(Q_{1}-Q_{0}^{-1})(v_{-m-n}z+\varsigma_{i}q^{-d_{i}}v_{m+n})\Big(\sum_{l=1}^{j}z^{j-2l}\Big)\\
&+(Q_{0}^{-1}Q_{1}-1)(v_{-m-n}z+\varsigma_{i}q^{-d_{i}}v_{m+n})\Big(\sum_{l=1}^{j}z^{j+1-2l}\Big).
\end{align*}
Since $q \varsigma_{m+n+1}=Q_0^{-1}Q_1$, we have the desired equality. The other cases can be shown similarly.

When $k=i=N-1$ and $j\geq 0$, we have
\begin{align*}
&(B_{N-1}\cdot v_{N-1}z^{j})\cdot T_{0}=v_{N}z^{j}\cdot T_{0}=v_{0}z^{j+1}\cdot T_{0}\\
=&Q_{0}^{-1}v_{0}z^{-j-1}+(Q_{1}-Q_{0}^{-1})v_{0}\Big(\sum_{l=1}^{j+1}z^{j+1-2l}\Big)+(Q_{0}^{-1}Q_{1}-1)v_{0}\Big(\sum_{l=1}^{j+1}z^{j+2-2l}\Big),
\end{align*}
\begin{align*}
&B_{N-1}\cdot (v_{N-1}z^{j}\cdot T_{0})=B_{N-1}\cdot \Big[ (-1)^{p(-1)}Q_{0}^{-1}Q_{1}v_{1}z^{-j-1}+(Q_{1}-Q_{0}^{-1})v_{-1}\Big(\sum_{l=1}^{j}z^{j+1-2l}\Big)\\
&+(Q_{0}^{-1}Q_{1}-1)v_{-1}\Big(\sum_{l=1}^{j+1}z^{j+2-2l}\Big)\Big]\\
=&(-1)^{p(-1)}Q_{0}^{-1}Q_{1}\varsigma_{N-1}v_{0}z^{-j-1}+(Q_{1}-Q_{0}^{-1})v_{0}\Big(\sum_{l=1}^{j}z^{j+1-2l}\Big)+(Q_{0}^{-1}Q_{1}-1)v_{0}\Big(\sum_{l=1}^{j+1}z^{j+2-2l}\Big).
\end{align*}
Since $Q_{0}^{-1}\varsigma_{N-1}=1$, we easily obtain $(B_{N-1}\cdot v_{N-1}z^{j})\cdot T_{0}=B_{N-1}\cdot (v_{N-1}z^{j}\cdot T_{0})$.

When $i=N-1$, $k=1$ and $j\geq 0$, we have
\begin{align*}
&(B_{N-1}\cdot v_{1}z^{j})\cdot T_{0}=\varsigma_{N-1}v_{0}z^{j}\cdot T_{0}\\
=&\varsigma_{N-1}\Big[ Q_{0}^{-1}v_{0}z^{-j}+(Q_{1}-Q_{0}^{-1})v_{0}\Big(\sum_{l=1}^{j}z^{j-2l}\Big)+(Q_{0}^{-1}Q_{1}-1)v_{0}\Big(\sum_{l=1}^{j}z^{j+1-2l}\Big)\Big],
\end{align*}
\begin{align*}
&B_{N-1}\cdot (v_{1}z^{j}\cdot T_{0})\\
=&B_{N-1}\cdot \Big[ (-1)^{p(1)}v_{-1}z^{-j}+(Q_{1}-Q_{0}^{-1})v_{1}\Big(\sum_{l=1}^{j}z^{j-2l}\Big)
+(Q_{0}^{-1}Q_{1}-1)v_{1}\Big(\sum_{l=1}^{j}z^{j+1-2l}\Big)\Big]\\
=&(-1)^{p(1)}v_{0}z^{-j}+(Q_{1}-Q_{0}^{-1})\varsigma_{N-1}v_{0}\Big(\sum_{l=1}^{j}z^{j-2l}\Big)+(Q_{0}^{-1}Q_{1}-1)\varsigma_{N-1}v_{0}\Big(\sum_{l=1}^{j}z^{j+1-2l}\Big).
\end{align*}
Since $Q_{0}^{-1}\varsigma_{N-1}=1$, we have the desired equality. The other cases can be proved similarly.
\end{proof}

\section{Schur--Weyl duality for $\widehat{U}^{\jmath}(\mathfrak{sl})$}\label{swrecio}
In this section we establish the double centralizer property between $\widehat{U}^{\jmath}(\mathfrak{sl})$ and $\mathbb{H}$ via their commutative actions on $\mathbb{V}^{\otimes d}$.
\subsection{Affine $\jmath$-Schur superalgebras}
From now on, we assume that $m+n\geq d$. Let $\Lambda_{m| n;d}$ be the set of weak compositions (with parities) of $d$ into $m+n+2$ parts, that is,
\begin{align}\label{ldabma}
\Lambda_{m| n;d} :=\Big\{\lambda=(\lambda^{(0)}| \lambda^{(1)})=(\lambda_{0}, \lambda_{1},\ldots,\lambda_{m}|\lambda_{m+1},\ldots,\lambda_{m+n+1})\in \mathbb{N}^{m+n+2}\Big|\sum_{i=0}^{m+n+1}\lambda_{i}=d\Big\}.
\end{align}

Let $W$ be the affine Weyl group of type $C_{d}$ generated by $S=\{s_{0}, s_{1},\ldots,s_{d}\}$ with the following affine Dynkin diagram:
\begin{center}
\begin{tikzpicture}
    \foreach \x/\label in {
        0/0, 1.5/1, 4.5/{d-1}, 6/d
    }{
        \draw (\x,0) circle (0.11);
        \node[below=4pt] at (\x,-0.11)
            {$\label$};
    }

    \node at (0.75,0) {$\Longrightarrow$};
    \node at (5.25,0) {$\Longleftarrow$};

    \draw (1.61,0) -- (2.45,0);
    \node at (3,0) {$\cdots$};
    \draw (3.55,0) -- (4.39,0);
\end{tikzpicture}
\end{center}
Let $W_{f}$ be the finite subgroup of $W$ generated by $s_{0}, s_{1},\ldots,s_{d-1}$, which is the finite Weyl group of type $C_{d}$. $W_{f}$ has a natural right action on $\mathbb{Z}^{d}$ as follows:
\begin{align*}
(f_1,f_2,\ldots,f_{d})\cdot s_{i}=\left\{
\begin{array}{ll}
(f_1,\ldots,f_{i-1},f_{i+1},f_{i},f_{i+2},\ldots,f_{d})&\hspace{0.35cm}\text{if $1\leq i\leq d-1$},\\[0.3em]
(-f_1,f_2,\ldots,f_{d})&\hspace{0.35cm}\text{if $i=0$}.
\end{array}
\right.
\end{align*}
Then by \cite[Prop. 4.2]{Hu90}, we have $W_{f}\ltimes \mathbb{Z}^{d}\cong W$. Under the identification, we have $(f_1,f_2,\ldots,f_{d})\cdot s_{d}=(f_1,f_2,\ldots,f_{d-1}, N-f_{d})$.

For $\lambda\in \Lambda_{m| n;d}$, let $W_{\lambda}$ be the finite parabolic subgroup of $W$ generated by $S\setminus \{s_{\widetilde{\lambda}_{0}},s_{\widetilde{\lambda}_{1}},\ldots,s_{\widetilde{\lambda}_{m+n}}\}$, where $\widetilde{\lambda}_{i}=\sum_{j=0}^{i}\lambda_{j}$ for $0\leq i\leq m+n$. Let $W_{\lambda^{(0)}}$ (resp. $W_{\lambda^{(1)}}$) be the subgroup of $W_{\lambda}$ generated by $\{s_{0},s_{1},\ldots,s_{\widetilde{\lambda}_{m}}\}\setminus \{s_{\widetilde{\lambda}_{0}},s_{\widetilde{\lambda}_{1}},\ldots,s_{\widetilde{\lambda}_{m}}\}$ (resp. $\{s_{\widetilde{\lambda}_{m}+1}, s_{\widetilde{\lambda}_{m}+2},\ldots,s_{d}\}\setminus \{s_{\widetilde{\lambda}_{m+1}},s_{\widetilde{\lambda}_{m+2}},\ldots,s_{\widetilde{\lambda}_{m+n}}\}$). Then we have $W_{\lambda}=W_{\lambda^{(0)}}\times W_{\lambda^{(1)}}$. Thus, for each $w\in W_{\lambda}$, we can uniquely write $w=w^{(0)}w^{(1)}$ for $w^{(0)}\in W_{\lambda^{(0)}}$ and $w^{(1)}\in W_{\lambda^{(1)}}$.

We define $T_{d}\in \mathbb{H}$ by
\begin{align*}
T_{d} :=Q_{0}^{-1}X_{d}T_{d-1}^{-1}\cdots T_{1}^{-1}T_{0}^{-1}T_{1}^{-1}\cdots T_{d-1}^{-1}.
\end{align*}
By \cite[Lemma 2.4]{FLLLWW20} and \cite[\S\S4.1-4.3]{St04}, we have the following result.
\begin{prop}\label{another}
$\mathbb{H}$ admits another presentation with generators $T_{0}, T_{1},\ldots, T_{d}$ and relations:
\begin{align*}
&(T_{0}-Q_{0}^{-1})(T_{0}+Q_{1})=0,\quad (T_{d}-Q_{1}^{-1})(T_{d}+Q_{0}^{-1})=0,\quad (T_{i}-q^{-1})(T_{i}+q)=0\quad \text{$(i\neq 0, d)$,}\\
&T_{k}T_{k-1}T_{k}=T_{k-1}T_{k}T_{k-1},\quad \text{$(2\leq k\leq d-1)$,}\\
&T_{0}T_{1}T_{0}T_{1}=T_{1}T_{0}T_{1}T_{0},\quad T_{d-1}T_{d}T_{d-1}T_{d}=T_{d}T_{d-1}T_{d}T_{d-1},\quad T_{i}T_{j}=T_{j}T_{i}\quad \text{$(|i-j|>1)$.}
\end{align*}
\end{prop}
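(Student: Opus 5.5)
We prove Proposition \ref{another} by showing that the two presentations define the same algebra. Let $\mathbb{H}'$ be the algebra with generators $T_0,\dots,T_d$ and the listed relations. Since $T_1,\dots,T_{d-1}$ are invertible (by their quadratic relations), we send $X_d$ to
\[
Q_0 T_d T_{d-1}\cdots T_1 T_0 T_1\cdots T_{d-1},
\]
which inverts the defining formula for $T_d$. We then define the remaining $X$'s by
\[
X_{a}=T_{a}^{-1}X_{a+1}T_{a}^{-1}\quad (1\le a\le d-1),
\]
as forced by $T_aX_aT_a=X_{a+1}$. This gives mutually inverse maps $\mathbb{H}\leftrightarrow\mathbb{H}'$ on generators, provided each set of relations maps into the other. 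The Bernstein--Lusztig presentation (1)--(3) and the Coxeter-type (Iwahori--Matsumoto) presentation of the affine Hecke algebra of type $C_d$ with unequal parameters are known to be equivalent. This is the content of \cite[Lemma 2.4]{FLLLWW20} and \cite[\S\S4.1--4.3]{St04}, and the plan is to follow that argument while tracking our normalization of $T_0$.

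The first direction checks that the element $T_d=Q_0^{-1}X_dT_{d-1}^{-1}\cdots T_0^{-1}\cdots T_{d-1}^{-1}$ of $\mathbb{H}$ satisfies the new relations.
\begin{itemize}
\item For $T_d$ commuting with $T_i$, $1\le i\le d-2$: write $T_d=Q_0^{-1}X_d\,Y^{-1}$ with $Y=T_{d-1}\cdots T_1T_0T_1\cdots T_{d-1}$. Then $Y$ is the Jucys--Murphy-type element attached to the reflection of the $d$-th coordinate. By braid relations, $T_iY=YT_i$ for $i\le d-2$, and $X_d$ commutes with such $T_i$ by (3).
\item For the quadratic relation of $T_d$: use $X_d=T_{d-1}\cdots T_1X_1T_1\cdots T_{d-1}$, which is obtained by iterating (3). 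This gives
\[
T_d=Q_0^{-1}T_{d-1}\cdots T_1X_1T_0^{-1}T_1^{-1}\cdots T_{d-1}^{-1}.
\]
Hence $T_d$ is conjugate to $Q_0^{-1}X_1T_0^{-1}$. The relation $(T_d-Q_1^{-1})(T_d+Q_0^{-1})=0$ then reduces to a quadratic relation for $X_1T_0^{-1}$. That relation is equivalent to $T_0X_1^{-1}T_0=Q_0^{-1}Q_1X_1+(Q_0^{-1}Q_1-1)T_0$ combined with the $T_0$ quadratic. This is a direct two-generator computation.
\item For the braid relation $T_{d-1}T_dT_{d-1}T_d=T_dT_{d-1}T_dT_{d-1}$: use the same conjugation to reduce to the rank-two situation, namely the subalgebra generated by $T_{d-1},X_{d-1},X_d$ and the element $Y_{d-1}$. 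There, the relation follows from $X_{d-1}X_d=X_dX_{d-1}$ and $T_{d-1}X_{d-1}T_{d-1}=X_d$.
\end{itemize}

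The second direction checks that in $\mathbb{H}'$ the defined $X_a$ satisfy (1) and (3). The identities $T_aX_aT_a=X_{a+1}$ hold by construction. Commutation of $T_i$ with $X_j$ for $j\neq i,i+1$ follows from braid relations. The relation $T_0X_1^{-1}T_0=\dots$ is the reverse of the quadratic computation above. The genuinely nontrivial relation is commutativity $X_aX_b=X_bX_a$. Following Lusztig/Stokman, we identify $X_1=Q_0T_1^{-1}\cdots T_{d-1}^{-1}T_d T_{d-1}\cdots T_1T_0$ with a translation element of length-additive form, up to scalar. Commutativity is then reduced to $X_1X_2=X_2X_1$, that is, to $X_1$ commuting with $T_1X_1T_1$. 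This in turn follows from the relation $X_1T_1X_1T_1=T_1X_1T_1X_1$, which is a consequence of the type $B$ braid relations together with the $T_0$ and $T_d$ braid relations.

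The main obstacle is the commutativity of the $X_a$ in $\mathbb{H}'$, together with the bookkeeping of the unequal parameters $Q_0,Q_1$. The first thing to try is quoting \cite[\S\S4.1--4.3]{St04}, where the dual affine Hecke algebra is shown to have exactly this presentation. After that, it remains only to verify that our normalization of $T_0$ matches Stokman's up to rescaling the parameters. That verification is done by comparing the quadratic relations $(T_0-Q_0^{-1})(T_0+Q_1)=0$ and $(T_d-Q_1^{-1})(T_d+Q_0^{-1})=0$ with those in \cite[Lemma 2.4]{FLLLWW20}.
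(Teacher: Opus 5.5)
Your proposal is correct and follows essentially the paper's route. The paper gives no independent argument; it simply invokes \cite[Lemma 2.4]{FLLLWW20} and \cite[\S\S4.1--4.3]{St04}, and that is exactly where you place the one genuinely nontrivial step, the commutativity of the $X_a$ in the Coxeter-type algebra. Your direct checks of the remaining relations are sound: conjugating $T_d$ to $Q_0^{-1}X_1T_0^{-1}$ (for the quadratic relation of $T_d$ and, in reverse, for recovering $T_0X_1^{-1}T_0=Q_0^{-1}Q_1X_1+(Q_0^{-1}Q_1-1)T_0$), and the $T_{d-1}$--$T_d$ braid relation via $Y_{d-1}$. You should also record the equally easy check $T_0T_d=T_dT_0$, which holds because $T_0$ commutes with $X_d$ and with $T_{d-1}\cdots T_1T_0T_1\cdots T_{d-1}$.
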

For any $w\in W$ with a reduced expression $w=s_{i_{1}}\cdots s_{i_{t}}$, we set $T_{w} :=T_{i_{1}}\cdots T_{i_{t}}$ and $q_{w} :=q_{s_{i_{1}}}\cdots q_{s_{i_{t}}}$,
where
\begin{align*}
q_{s_{i}}=\left\{
\begin{array}{ll}
Q_{1}&\hspace{0.35cm}\text{if $i=0$ or $d$},\\[0.3em]
q&\hspace{0.35cm}\text{if $1\leq i\leq d-1$}.
\end{array}
\right.
\end{align*}
By Prop. \ref{another}, the above definitions are well-defined. Moreover, the set $\{T_{w}\:|\:w\in W\}$ forms an $\F$-basis of $\mathbb{H}$.

For $\lambda=(\lambda^{(0)}| \lambda^{(1)})\in \Lambda_{m| n;d}$, we set
\begin{align*}
x_{\lambda^{(0)}}=\sum_{w\in W_{\lambda^{(0)}}}q_{w}^{-1}T_{w},\qquad y_{\lambda^{(1)}}=\sum_{w\in W_{\lambda^{(1)}}}(-1)^{\ell(w)}q_{w}T_{w},
\end{align*}
where $\ell :W\rightarrow \mathbb{Z}_{\geq 0}$ is the length function on $W$. We have the following result.

\begin{lem}\label{lamadb}
For $\lambda\in \Lambda_{m| n;d}$ and for $i\in \{0, 1, \ldots, d\}\setminus \{\widetilde{\lambda}_{0},\widetilde{\lambda}_{1},\ldots,\widetilde{\lambda}_{m+n}\}$, we have
\begin{align*}
x_{\lambda^{(0)}}y_{\lambda^{(1)}}T_{i}=\left\{
\begin{array}{ll}
Q_{0}^{-1}x_{\lambda^{(0)}}y_{\lambda^{(1)}}&\hspace{0.35cm}\text{if $i=0$},\\[0.3em]
q^{-1}x_{\lambda^{(0)}}y_{\lambda^{(1)}}&\hspace{0.35cm}\text{if $1\leq i< \widetilde{\lambda}_{m}$},\\[0.3em]
-qx_{\lambda^{(0)}}y_{\lambda^{(1)}}&\hspace{0.35cm}\text{if $\widetilde{\lambda}_{m}< i\leq d-1$},\\[0.3em]
-Q_{0}^{-1}x_{\lambda^{(0)}}y_{\lambda^{(1)}}&\hspace{0.35cm}\text{if $i=d$}.
\end{array}
\right.
\end{align*}
\end{lem}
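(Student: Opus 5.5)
The argument is the standard one for parabolic (anti)symmetrizers, carried out separately for the two commuting factors. Since $W_{\lambda}=W_{\lambda^{(0)}}\times W_{\lambda^{(1)}}$ and the generators of $W_{\lambda^{(0)}}$ and $W_{\lambda^{(1)}}$ are non-adjacent, $x_{\lambda^{(0)}}$ and $y_{\lambda^{(1)}}$ commute. The index $\widetilde{\lambda}_{m}$ is excluded, so the generating sets are disjoint with no edge between them. Moreover, $T_w=T_{w^{(0)}}T_{w^{(1)}}$ for $w=w^{(0)}w^{(1)}$, by Prop. \ref{another}.

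Hence for $i$ a generator of $W_{\lambda^{(0)}}$ (i.e. $i=0$ or $1\leq i<\widetilde{\lambda}_{m}$) it suffices to show $x_{\lambda^{(0)}}T_i=Q_0^{-1}x_{\lambda^{(0)}}$ resp. $q^{-1}x_{\lambda^{(0)}}$. For $i$ a generator of $W_{\lambda^{(1)}}$ (i.e. $\widetilde{\lambda}_{m}<i\leq d$) we write $x_{\lambda^{(0)}}y_{\lambda^{(1)}}T_i=x_{\lambda^{(0)}}(y_{\lambda^{(1)}}T_i)$ and show $y_{\lambda^{(1)}}T_i=-q\,y_{\lambda^{(1)}}$ resp. $-Q_0^{-1}y_{\lambda^{(1)}}$. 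The case $i=0$ only arises if $\lambda_0>0$, and then $0\in W_{\lambda^{(0)}}$. The case $i=d$ only arises if $\lambda_{m+n+1}>0$, and then $d\in W_{\lambda^{(1)}}$. The cases $\widetilde\lambda_m<i\le d-1$ lie in $W_{\lambda^{(1)}}$.

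For $x_{\lambda^{(0)}}T_i$ with $s=s_i\in W_{\lambda^{(0)}}$, I pair the elements of $W_{\lambda^{(0)}}$ via the coset decomposition $\{w,ws\}$ with $\ell(ws)>\ell(w)$. Then $T_wT_s=T_{ws}$ and $T_{ws}T_s=T_w(T_s^2)$. Using the quadratic relations $T_i^2=1+(q^{-1}-q)T_i$ and $T_0^2=Q_0^{-1}Q_1+(Q_0^{-1}-Q_1)T_0$, we get
\begin{align*}
(q_w^{-1}T_w+q_{ws}^{-1}T_{ws})T_s=q_w^{-1}\bigl(T_{ws}+q_s^{-1}T_w T_s^2\bigr),
\end{align*}
and it must equal $\mu_s\,q_w^{-1}(T_w+q_s^{-1}T_{ws})$ with $\mu_s$ the relevant eigenvalue. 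For $1\le i\le d-1$, $q_s=q$ and $\mu_s=q^{-1}$, and both sides agree: $q^{-1}T_w+(q^{-2}-1+1)T_{ws}$ against $q^{-1}T_w+q^{-2}T_{ws}$. For $s=s_0$, $q_s=Q_1$ and $\mu_s=Q_0^{-1}$, and $Q_1^{-1}(Q_0^{-1}Q_1T_w+(Q_0^{-1}-Q_1)T_{ws})+T_{ws}=Q_0^{-1}T_w+Q_0^{-1}Q_1^{-1}T_{ws}$, as required. The key point is that $q_w$ is well defined and multiplicative along reduced words, which follows from Prop. \ref{another} since the braid relations preserve the multiset of $0/d$ versus other letters.

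For $y_{\lambda^{(1)}}$ I use the same pairing with weights $(-1)^{\ell(w)}q_w$. For $1\le i\le d-1$ the eigenvalue is $-q$, which is the other root of $(T-q^{-1})(T+q)$. For $s=s_d$ I use the quadratic relation $(T_d-Q_1^{-1})(T_d+Q_0^{-1})=0$ from Prop. \ref{another}, i.e. $T_d^2=Q_0^{-1}Q_1^{-1}+(Q_1^{-1}-Q_0^{-1})T_d$, together with $q_{s_d}=Q_1$. The check is
\begin{align*}
(T_w-Q_1T_{ws})T_d=T_{ws}-Q_1\bigl(Q_0^{-1}Q_1^{-1}T_w+(Q_1^{-1}-Q_0^{-1})T_{ws}\bigr)=-Q_0^{-1}(T_w-Q_1T_{ws}),
\end{align*}
giving eigenvalue $-Q_0^{-1}$. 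The only real obstacle is bookkeeping. First, one has to confirm that the parameter $q_{s_d}=Q_1$ pairs with the root $-Q_0^{-1}$ of the $T_d$ quadratic, rather than with $Q_1^{-1}$; this is exactly the computation above. Second, one has to confirm that $W_{\lambda^{(0)}},W_{\lambda^{(1)}}$ are finite parabolic subgroups, so that the coset pairing argument applies. This holds because at least one of $s_0,\dots,s_d$ is removed, by the definition of $W_\lambda$.
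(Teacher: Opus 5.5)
Your proof is correct and takes the same route as the paper. You partition $W_{\lambda^{(0)}}$ (resp.\ $W_{\lambda^{(1)}}$) into pairs $\{w,ws_i\}$, apply the quadratic relations of Prop.~\ref{another}, and use that $x_{\lambda^{(0)}}$ and $y_{\lambda^{(1)}}$ commute to treat one factor at a time. The paper packages the same computation as $x_{\lambda^{(0)}}=\sum_{ws_i<w}q_w^{-1}T_{ws_i}(T_i+q_{s_i})$, with $(T_i+q_{s_i})T_i$ a scalar multiple of $T_i+q_{s_i}$, and analogously for $y_{\lambda^{(1)}}$. Your extra bookkeeping (finiteness of $W_\lambda$, well-definedness of $q_w$, and that $q_{s_d}=Q_1$ pairs with the root $-Q_0^{-1}$) is all accurate.
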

\begin{proof}
We have $x_{\lambda^{(0)}}=\sum_{w\in W_{\lambda^{(0)}}\atop ws_{i}< w}q_{w}^{-1}(T_{w}+q_{s_{i}}T_{ws_{i}})$ when $s_{i}\in W_{\lambda^{(0)}}$, and $y_{\lambda^{(1)}}=\sum_{w\in W_{\lambda^{(1)}}\atop ws_{i}< w}(-1)^{\ell(w)}q_{w}(T_{w}+(-q_{s_{i}}^{-1})T_{ws_{i}})$ when $s_{i}\in W_{\lambda^{(1)}}$, where $<$ is the Bruhat ordering on $W$. Then
\begin{align*}
x_{\lambda^{(0)}}y_{\lambda^{(1)}}T_{0}=x_{\lambda^{(0)}}T_{0}y_{\lambda^{(1)}}=\sum_{w\in W_{\lambda^{(0)}}\atop ws_{0}< w}q_{w}^{-1}T_{ws_{0}}(T_{0}+q_{s_{0}})T_{0}y_{\lambda^{(1)}}=Q_{0}^{-1}x_{\lambda^{(0)}}y_{\lambda^{(1)}}.
\end{align*}
\begin{align*}
x_{\lambda^{(0)}}y_{\lambda^{(1)}}T_{i}=x_{\lambda^{(0)}}T_{i}y_{\lambda^{(1)}}=\sum_{w\in W_{\lambda^{(0)}}\atop ws_{i}< w}q_{w}^{-1}T_{ws_{i}}(T_{i}+q_{s_{i}})T_{i}y_{\lambda^{(1)}}=q^{-1}x_{\lambda^{(0)}}y_{\lambda^{(1)}}\quad \text{for $1\leq i< \widetilde{\lambda}_{m}$}.
\end{align*}
\begin{align*}
x_{\lambda^{(0)}}y_{\lambda^{(1)}}T_{i}=x_{\lambda^{(0)}}\sum_{w\in W_{\lambda^{(1)}}\atop ws_{i}< w}(-1)^{\ell(w)}q_{w}T_{ws_{i}}(T_{i}+(-q_{s_{i}}^{-1}))T_{i}=-qx_{\lambda^{(0)}}y_{\lambda^{(1)}}\quad \text{for $\widetilde{\lambda}_{m}< i\leq d-1$}.
\end{align*}
\begin{align*}
x_{\lambda^{(0)}}y_{\lambda^{(1)}}T_{d}=x_{\lambda^{(0)}}\sum_{w\in W_{\lambda^{(1)}}\atop ws_{d}< w}(-1)^{\ell(w)}q_{w}T_{ws_{d}}(T_{d}+(-q_{s_{d}}^{-1}))T_{d}=-Q_{0}^{-1}x_{\lambda^{(0)}}y_{\lambda^{(1)}}.
\end{align*}
\end{proof}

For each $\lambda\in \Lambda_{m| n;d}$, we define $$p(\lambda)=\sum\limits_{i=m+1}^{m+n+1}\lambda_{i}\quad (\text{mod }2).$$
Now we can define the affine $\jmath$-Schur superalgebra $\mathbb{S}_{m| n;d}^{\jmath}$ of three parameters as follows:
\begin{align*}
\mathbb{S}_{m| n;d}^{\jmath} :=\mathrm{End}_{\mathbb{H}}(\oplus_{\lambda\in \Lambda_{m| n;d}}x_{\lambda^{(0)}}y_{\lambda^{(1)}}\mathbb{H})=\bigoplus_{\lambda, \mu\in \Lambda_{m| n;d}}\mathrm{Hom}_{\mathbb{H}}(x_{\mu^{(0)}}y_{\mu^{(1)}}\mathbb{H}, x_{\lambda^{(0)}}y_{\lambda^{(1)}}\mathbb{H}),
\end{align*}
together with a $\Z_2$-grading defined by
$$\left(\mathbb{S}_{m| n;d}^{\jmath}\right)_{\overline{a}}=\bigoplus_{\substack{\lambda, \mu\in \Lambda_{m| n;d}\\p(\lambda)+p(\mu)=\overline{a}}}\mathrm{Hom}_{\mathbb{H}}(x_{\mu^{(0)}}y_{\mu^{(1)}}\mathbb{H}, x_{\lambda^{(0)}}y_{\lambda^{(1)}}\mathbb{H}),\quad  \text{for }\overline{a}\in \Z_2.$$

Let $\mathcal{D}_{\lambda}$ (resp. $\mathcal{D}_{\lambda}^{-1}$) be the minimal length right (resp. left) coset representatives of $W_{\lambda}$ in $W$, that is,
\begin{align*}
\mathcal{D}_{\lambda}=\{g\in W\:|\:\ell(wg)=\ell(w)+\ell(g),~\forall w\in W_{\lambda}\}.
\end{align*}
Denote by $\mathcal{D}_{\lambda\mu}=\mathcal{D}_{\lambda}\cap \mathcal{D}_{\mu}^{-1}$ the set of minimal length double coset representatives of $W_{\lambda}\backslash W/W_{\mu}$.

For $\lambda, \mu\in \Lambda_{m| n;d}$ and $g\in \mathcal{D}_{\lambda\mu}$, by \cite[Prop. 2.4]{FLLLW23} there exists a weak composition $\delta \in \Lambda_{m_{1}| n_{1};d}$ for some $m_{1}, n_{1}$ such that $W_{\delta} =g^{-1}W_{\lambda}g\cap W_{\mu}$. Besides, the map $W_{\lambda} \times (\mathcal{D}_{\delta}\cap W_{\mu})\rightarrow W_{\lambda}gW_{\mu}$ sending $(x, y)$ to $xgy$ is a bijection, and moreover, we have $\ell(xgy)=\ell(x)+\ell(g)+\ell(y)$; the map $W_{\delta} \times (\mathcal{D}_{\delta}\cap W_{\mu})\rightarrow W_{\mu}$ sending $(x, y)$ to $xy$ is a bijection, and moreover, we have $\ell(xy)=\ell(x)+\ell(y)$. Set
\begin{align*}
\mathcal{D}_{\lambda\mu}^{\circ} :=\big\{g\in \mathcal{D}_{\lambda\mu}\:|\:g^{-1}W_{\lambda^{(1)}}g\cap W_{\mu^{(0)}}=g^{-1}W_{\lambda^{(0)}}g\cap W_{\mu^{(1)}}=\{e\}\big\}.
\end{align*}
If $g\in \mathcal{D}_{\lambda\mu}^{\circ}$, we see that $W_{\delta} =W_{\delta^{(0)}}\times W_{\delta^{(1)}}$, where $W_{\delta^{(0)}}=g^{-1}W_{\lambda^{(0)}}g\cap W_{\mu^{(0)}}$ and $W_{\delta^{(1)}}=g^{-1}W_{\lambda^{(1)}}g\cap W_{\mu^{(1)}}$.

For $\lambda, \mu\in \Lambda_{m| n;d}$ and $g\in \mathcal{D}_{\lambda\mu}^{\circ}$, we define $\phi_{\lambda, \mu}^{g}\in \mathbb{S}_{m| n;d}^{\jmath}$ by
\begin{align*}
\phi_{\lambda, \mu}^{g}(x_{\nu^{(0)}}y_{\nu^{(1)}}h)=\delta_{\mu, \nu}T_{W_{\lambda}gW_{\mu}}h,\quad \forall \nu\in \Lambda_{m| n;d} \text{ and }h\in \mathbb{H},
\end{align*}
where $T_{W_{\lambda}gW_{\mu}}=x_{\lambda^{(0)}}y_{\lambda^{(1)}}T_{g}T_{\mathcal{D}_{\delta}\cap W_{\mu}}$ and
\begin{align*}
T_{\mathcal{D}_{\delta}\cap W_{\mu}}=\sum\limits_{w=w^{(0)}w^{(1)}\in \mathcal{D}_{\delta}\cap W_{\mu}}(-1)^{\ell(w^{(1)})}q_{w^{(0)}}^{-1}q_{w^{(1)}}T_{w}.
\end{align*}

For any $s_{i}\in W_{\mu}$, by Lemma \ref{lamadb} we have $x_{\mu^{(0)}}y_{\mu^{(1)}}T_{s_{i}}= A x_{\mu^{(0)}}y_{\mu^{(1)}}$ for some $A\in \F$. Then we have
\begin{align*}
T_{W_{\lambda}gW_{\mu}}T_{s_{i}}= A\cdot T_{W_{\lambda}gW_{\mu}},
\end{align*}
which follows from the fact that $T_{W_{\lambda}gW_{\mu}}=h\cdot x_{\mu^{(0)}}y_{\mu^{(1)}}$ for some $h\in \mathbb{H}$. Thus $\phi_{\lambda, \mu}^{g}$ is a well-defined right $\mathbb{H}$-module homomorphism from $x_{\mu^{(0)}}y_{\mu^{(1)}}\mathbb{H}$ to $x_{\lambda^{(0)}}y_{\lambda^{(1)}}\mathbb{H}$. By a standard argument we have the following lemma (cf. \cite{DR11} and \cite{CL23} for super finite types $A$ and $B$, and also \cite{VV99}, \cite{DDF12}, \cite{FLLLW23} for affine types $A$ and $B$).
\begin{lem}
The set $\{\phi_{\lambda, \mu}^{g}\:|\:\lambda, \mu\in \Lambda_{m| n;d}, ~g\in \mathcal{D}_{\lambda\mu}^{\circ}\}$ forms an $\F$-basis of $\mathbb{S}_{m| n;d}^{\jmath}$.
\end{lem}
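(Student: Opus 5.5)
The plan is to reduce the statement to a description of each Hom space $\mathrm{Hom}_{\mathbb{H}}(x_{\mu^{(0)}}y_{\mu^{(1)}}\mathbb{H}, x_{\lambda^{(0)}}y_{\lambda^{(1)}}\mathbb{H})$ separately. Since $x_{\mu^{(0)}}y_{\mu^{(1)}}\mathbb{H}$ is cyclic, a homomorphism $\phi$ is determined by $\phi(x_{\mu^{(0)}}y_{\mu^{(1)}})$. By Lemma \ref{lamadb}, this image must lie in the intersection
$$x_{\lambda^{(0)}}y_{\lambda^{(1)}}\mathbb{H}\cap \mathcal{M}_{\mu},$$
where $\mathcal{M}_{\mu}$ is the set of $h\in\mathbb{H}$ with $hT_{i}=c_{i}(\mu)h$ for all $s_i\in W_\mu$, and $c_i(\mu)$ is the eigenvalue from Lemma \ref{lamadb}. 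Conversely, every element of this intersection defines a homomorphism. This is checked by writing $\mathbb{H}$ as a free left module over the parabolic subalgebra $\mathbb{H}_{\mu}=\langle T_i : s_i\in W_\mu\rangle$ with basis indexed by $\mathcal{D}_\mu$: the right annihilator of $x_{\mu^{(0)}}y_{\mu^{(1)}}$ is generated by $T_i-c_i(\mu)$, $s_i\in W_\mu$, since $\mathbb{H}_\mu\cong\mathbb{H}_{\mu^{(0)}}\otimes\mathbb{H}_{\mu^{(1)}}$ and $x_{\mu^{(0)}}y_{\mu^{(1)}}$ spans the one-dimensional ``trivial$\otimes$sign'' character. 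Note that $W_\mu$ is finite, being a proper parabolic of the affine group.

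Next I would compute this intersection in the $T_w$-basis. Write $h=\sum_w a_wT_w$. The condition $x_{\lambda^{(0)}}y_{\lambda^{(1)}}T_i\cdot(\ldots)$ on the left forces $a_{xw}=(\text{trivial/sign weight of }x)\,a_w$ for $x\in W_\lambda$ with $\ell(xw)=\ell(x)+\ell(w)$. The right eigen-conditions force the analogous relation for $W_\mu$. So $h$ is a sum over double cosets $W_\lambda gW_\mu$ with $g\in\mathcal{D}_{\lambda\mu}$, and on each double coset the coefficients are determined by $a_g$. Using the bijection $W_\lambda\times(\mathcal{D}_\delta\cap W_\mu)\to W_\lambda gW_\mu$ with additive lengths (the cited \cite[Prop. 2.4]{FLLLW23}), the candidate element on that double coset is exactly $T_{W_\lambda gW_\mu}$.

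The key point is consistency. An element $y\in W_\delta=g^{-1}W_\lambda g\cap W_\mu$ can be moved across $g$, so it receives a weight both from the left rule and from the right rule. When $g\notin\mathcal{D}^\circ_{\lambda\mu}$, some simple reflection $s\in g^{-1}W_{\lambda^{(1)}}g\cap W_{\mu^{(0)}}$ (or the reverse) exists. I would take $s=g^{-1}s'g$ with $s'$ simple: by Deodhar/Kilmoyer, $W_\delta$ is standard parabolic, generated by such reflections. Then the left rule gives $T_{s'}T_g=T_gT_s$ with eigenvalue $-q_{s}^{-1}$ (sign), while the right rule demands $q_s$ (trivial). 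This forces $a_g(-q_s^{-1}-q_s)=0$, so $a_g=0$, using that $q_s^2\neq-1$ generically over $\F$. Here one must check that conjugate simple reflections $s,s'$ carry the same parameter $q_s$; this holds because $s_0,s_d$ are never conjugate to $s_i$, $1\le i\le d-1$, in type $C$. When $g\in\mathcal{D}^\circ_{\lambda\mu}$, both weights agree, since $W_\delta=W_{\delta^{(0)}}\times W_{\delta^{(1)}}$ and trivial matches trivial, sign matches sign. Then $T_{W_\lambda gW_\mu}$ genuinely lies in the intersection, as already observed before the statement.

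Finally, linear independence of the $\phi^g_{\lambda,\mu}$ is immediate. The elements $T_{W_\lambda gW_\mu}$ for distinct double cosets have disjoint supports in the basis $\{T_w\}$, and each has the nonzero leading coefficient on $T_g$. The main obstacle is the vanishing argument for $g\in\mathcal{D}_{\lambda\mu}\setminus\mathcal{D}^\circ_{\lambda\mu}$, in particular the careful bookkeeping of the three parameters $q,Q_0,Q_1$ and of the eigenvalue $-Q_0^{-1}$ at $T_d$ versus $Q_0^{-1}$ at $T_0$. I would handle it by the generic-parameter argument above, following \cite{DR11} and \cite{FLLLW23}.
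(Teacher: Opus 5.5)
Your proposal is correct and is the standard Dipper--James/Du--Rui double-coset argument. The paper does not write this argument out; it only cites the finite super cases and the affine non-super cases. Your steps match it:
\begin{itemize}
\item identify each Hom space with the right $W_\mu$-eigenvectors inside $x_{\lambda^{(0)}}y_{\lambda^{(1)}}\mathbb{H}$;
\item show that the coefficients on $W_\lambda gW_\mu$ are determined by the coefficient of $T_g$;
\item kill $g\notin\mathcal{D}^{\circ}_{\lambda\mu}$ using a Kilmoyer simple reflection whose two weights disagree.
\end{itemize}
Your observation that $\{s_0\}$, $\{s_1,\dots,s_{d-1}\}$ and $\{s_d\}$ are distinct conjugacy classes is exactly what makes the three-parameter bookkeeping harmless.

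One small slip: with the paper's normalization $(T_i-q^{-1})(T_i+q)=0$, the trivial and sign weights of a simple reflection are $q_s^{-1}$ and $-q_s$, not $q_s$ and $-q_s^{-1}$. The resulting constraint $a_g(q_s+q_s^{-1})=0$ is the same either way, so nothing in your argument changes.
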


Define the right $\mathbb{H}$-module
\begin{align*}
\mathbb{T}_{m| n;d} :=\bigoplus_{\lambda\in \Lambda_{m| n;d}}x_{\lambda^{(0)}}y_{\lambda^{(1)}}\mathbb{H},
\end{align*}
whose $\Z_2$-grading is
$$(\mathbb{T}_{m| n;d})_{\overline{a}}=\bigoplus_{\substack{\lambda\in \Lambda_{m| n;d}\\p(\lambda)=\overline{a}}}x_{\lambda^{(0)}}y_{\lambda^{(1)}}\mathbb{H},\quad  \text{for }\overline{a}\in \Z_2.$$
By Lemma \ref{lamadb} we can easily obtain the following result.
\begin{lem}\label{mamnbb}
$\mathbb{T}_{m| n;d}$, as an $\mathbb{H}$-module, has an $\F$-basis
\begin{align*}
\{x_{\lambda^{(0)}}y_{\lambda^{(1)}}T_{g}\:|\:\lambda\in \Lambda_{m| n;d}, ~g\in \mathcal{D}_{\lambda}\},
\end{align*}
\end{lem}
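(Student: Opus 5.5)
We prove the statement one summand at a time: it is enough to show that, for each fixed $\lambda\in\Lambda_{m|n;d}$, the elements $x_{\lambda^{(0)}}y_{\lambda^{(1)}}T_{g}$ with $g\in\mathcal{D}_{\lambda}$ form an $\F$-basis of $x_{\lambda^{(0)}}y_{\lambda^{(1)}}\mathbb{H}$.

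\textbf{Spanning.} We use the basis $\{T_w\mid w\in W\}$ of $\mathbb{H}$ coming from Prop. \ref{another}. Every $w\in W$ factors uniquely as $w=ug$ with $u\in W_{\lambda}$, $g\in\mathcal{D}_{\lambda}$ and $\ell(w)=\ell(u)+\ell(g)$, so $T_w=T_uT_g$. Write a reduced expression $u=s_{i_1}\cdots s_{i_k}$ with every $s_{i_j}\in W_\lambda$. Applying Lemma \ref{lamadb} $k$ times gives
\[
x_{\lambda^{(0)}}y_{\lambda^{(1)}}T_u=c_u\,x_{\lambda^{(0)}}y_{\lambda^{(1)}},\qquad c_u\in\F^\times.
\]
Here $c_u$ is a product of factors $Q_0^{-1}$, $q^{-1}$, $-q$ and $-Q_0^{-1}$. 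Hence $x_{\lambda^{(0)}}y_{\lambda^{(1)}}T_w=c_u\,x_{\lambda^{(0)}}y_{\lambda^{(1)}}T_g$, and the claimed elements span the summand.

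\textbf{Linear independence.} Expand $x_{\lambda^{(0)}}y_{\lambda^{(1)}}$ in the $T$-basis. Since $W_\lambda=W_{\lambda^{(0)}}\times W_{\lambda^{(1)}}$ and the two factors commute, $T_{w^{(0)}}T_{w^{(1)}}=T_{w^{(0)}w^{(1)}}$. This gives
\[
x_{\lambda^{(0)}}y_{\lambda^{(1)}}=\sum_{u\in W_\lambda}a_uT_u,\qquad a_u=(-1)^{\ell(u^{(1)})}q_{u^{(0)}}^{-1}q_{u^{(1)}}\neq 0.
\]
Multiplying by $T_g$ for $g\in\mathcal{D}_\lambda$ and using length additivity, we get
\[
x_{\lambda^{(0)}}y_{\lambda^{(1)}}T_g=\sum_{u\in W_\lambda}a_uT_{ug}.
\]
The supports $\{ug\mid u\in W_\lambda\}=W_\lambda g$ are pairwise disjoint for distinct $g\in\mathcal{D}_\lambda$. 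Therefore these elements are linearly independent, since each has a nonzero coefficient on $T_g$ and $T_g$ appears in no other element.

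\textbf{Main point to check.} The only step that needs care is that $T_{w^{(0)}}T_{w^{(1)}}=T_{w^{(0)}w^{(1)}}$ and $T_uT_g=T_{ug}$. Both follow from length additivity and the well-definedness of $T_w$ given by Prop. \ref{another}, including the generators $s_0$ and $s_d$. We also note that $W_\lambda$ is finite, since it is a proper parabolic subgroup of $W$, so all the sums above are finite.
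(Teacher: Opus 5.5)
Your proof is correct and takes the route the paper intends: spanning comes from Lemma \ref{lamadb} applied along a reduced expression of the $W_\lambda$-factor $u$ in $w=ug$. Linear independence comes from expanding $x_{\lambda^{(0)}}y_{\lambda^{(1)}}T_g=\sum_{u\in W_\lambda}a_uT_{ug}$ in the basis $\{T_w\mid w\in W\}$, where distinct $g$ give disjoint cosets $W_\lambda g$. The paper's proof only cites Lemma \ref{lamadb}, so your write-up supplies the standard details it leaves implicit, in particular the independence step.
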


\begin{lem}\label{votthma}
The tensor superspace $\mathbb{V}^{\otimes d}$ is generated by $\{M_{f}\:|\:0\leq f_{1}\leq f_{2}\leq \cdots \leq f_{d}\leq m+n+1\}$ as an $\mathbb{H}$-module. More precisely, $\mathbb{V}^{\otimes d}$ has an $\F$-basis
\begin{align*}
\{M_{f^{\lambda}}\cdot T_{g}\:|\:\lambda\in \Lambda_{m| n;d}, ~g\in \mathcal{D}_{\lambda}\},
\end{align*}
where $$f^{\lambda} =\{\underbrace{0,\ldots,0}_{\lambda_{0}},\underbrace{1,\ldots,1}_{\lambda_{1}},\ldots,\underbrace{m,\ldots,m}_{\lambda_{m}},\underbrace{m+1,\ldots,m+1}_{\lambda_{m+1}},\ldots,\underbrace{m+n+1,\ldots,m+n+1}_{\lambda_{m+n+1}}\}$$ if $\lambda=(\lambda_{0}, \lambda_{1},\ldots,\lambda_{m}|\lambda_{m+1},\ldots,\lambda_{m+n+1})$.
\end{lem}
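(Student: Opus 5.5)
We argue via orbits of the extended affine Weyl group action on index sequences, following the non-super argument of \cite{FLLLWW20}. Identify $W=W_f\ltimes\mathbb{Z}^d$ acting on the right on $\mathbb{Z}^d$: $s_i$ ($1\le i\le d-1$) swaps entries, $s_0$ negates $f_1$, and $s_d$ sends $f_d\mapsto N-f_d$. Every $W$-orbit on $\mathbb{Z}^d$ contains a unique dominant element $f$ with $0\le f_1\le\cdots\le f_d\le m+n+1$. Indeed, the fundamental domain for $x\mapsto -x$ and $x\mapsto N-x$ on $\mathbb{Z}$ is $[0..m+n+1]$, since $N=2(m+n+1)+2$ corresponds to the doubled range; the reflections $s_0,s_d$ fix $0$ and $N/2=m+n+1+\dots$ suitably. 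Such dominant $f$ are exactly the $f^\lambda$, $\lambda\in\Lambda_{m|n;d}$, and the stabilizer of $f^\lambda$ is the parabolic subgroup $W_\lambda$. Hence $f\mapsto (\lambda,g)$ with $f=f^\lambda\cdot g$, $g\in\mathcal{D}_\lambda$, is a bijection $\mathbb{Z}^d\leftrightarrow\{(\lambda,g)\}$.

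Next we prove a triangularity statement by induction on $\ell(g)$:
\[
M_{f^\lambda}\cdot T_g=\pm c\, M_{f^\lambda\cdot g}+\sum_{f'\prec f^\lambda\cdot g}a_{f'}M_{f'},
\]
with $c$ a unit (a monomial in $q,Q_0,Q_1$). Here $\prec$ is a suitable order on each $W$-orbit, for instance the Bruhat order transported through minimal coset representatives, and the sum involves only $f'=f^\lambda\cdot g'$ with $g'<g$. For the inductive step write $g=g's_i$ with $\ell(g)=\ell(g')+1$ and $g\in\mathcal{D}_\lambda$. Then $f^\lambda g'$ and $f^\lambda g$ differ by $s_i$, and minimality forces the relevant comparison (e.g.\ $\overline{f}_i<\overline{f}_{i+1}$ in the appropriate sense, or $\overline f_1>0$ for $s_0$) to give the "leading" branch. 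This branch is read off from \eqref{actions} and the $T_0$-formulas, with $T_d$ handled through $X_d$ and $T_{d-1}^{-1}\cdots T_0^{-1}\cdots$. The leading coefficient is a sign $(-1)^{p(\cdot)p(\cdot)}$ or $(-1)^{p(\overline f_1)}$, possibly times $Q_0^{-1}Q_1$. The lower terms produced by the $z$-correction sums in \eqref{actions} and in the $T_0$-formulas lie in the same orbit and are strictly lower. Since all of them stay within $\{f^\lambda g'':g''\le g\}$, the induction closes.

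Given the triangularity, the bijection of the first step shows that the transition matrix from $\{M_f\}$ to $\{M_{f^\lambda}T_g\}$ is block unitriangular up to units on each orbit. Both sets therefore are bases, and generation by the dominant $M_f$ follows.

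The main obstacle is the inductive step: one must verify that the correction terms really are lower in the chosen order. This is especially delicate for $T_d$, defined through $X_d$ and inverses, and for the $T_0$-formulas, whose geometric sums $\sum z_1^{j-2l}$ produce vectors $v_{\overline f_1}z_1^{k}$ with $|k|<|j|$. I would first try the order given by $\sum_s |\text{distance of } f_s \text{ to } [0..m+n+1]|$ combined with the number of inversions, checking that each correction strictly decreases it. If needed, I would instead reduce to the known non-super statement of \cite{FLLLWW20}: the super signs only change coefficients by $\pm1$ and never the support, so the leading-term analysis there transfers verbatim.
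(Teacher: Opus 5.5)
Your proposal follows the paper's proof: the orbit decomposition $\mathbb{Z}^d=\bigsqcup_{\lambda}f^\lambda W$ with $\mathrm{Stab}_W(f^\lambda)=W_\lambda$ gives the basis $\{M_{f^\lambda g}\}$, and the length-induction triangularity $M_{f^\lambda}\cdot T_g=c_{\lambda,g}M_{f^\lambda g}+\sum_{h\in\mathcal{D}_\lambda,\,h<g}c_{\lambda,h}M_{f^\lambda h}$ with $c_{\lambda,g}\neq 0$ finishes the argument; the paper likewise only asserts the lower-term bookkeeping you flag as the main obstacle. One slip: $N=2r+2=2(m+n+1)$, not $2(m+n+1)+2$, so $N/2=m+n+1$ is precisely the fixed point of $s_d$; also, in your fallback, the super action changes more than signs (e.g.\ the odd diagonal eigenvalue is $-q$ in place of $q^{-1}$), though only supports and nonvanishing leading coefficients matter there.
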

\begin{proof}
Recall the natural right action of $W$ on $\mathbb{Z}^{d}$. From this we have
$$ \mathbb Z^d=\bigsqcup_{\lambda\in\Lambda_{m|n;d}} f^\lambda W. $$
Moreover, the stabilizer of \(f^\lambda\) in $W$ is exactly \(W_\lambda\). Thus, the set $$ \{M_{f^\lambda g}\mid \lambda\in\Lambda_{m|n;d},\ g\in D_\lambda\} $$ forms an $\F$-basis of $\mathbb{V}^{\otimes d}$. Next, for each $g\in \mathcal{D}_{\lambda}$, by induction on its length we can obtain
$$ M_{f^{\lambda}}\cdot T_g = c_{\lambda,g}M_{f^\lambda g} +\sum_{\substack{h\in D_\lambda\\h<g}} c_{\lambda,h}M_{f^\lambda h}, \quad \text{where }c_{\lambda,g}\neq0.$$
From this, we can easily obtain the desired result.
\end{proof}

Summarizing the above, we obtain the following result.
\begin{lem}\label{mathtmnd}
There exists a unique grading-preserving $\mathbb{H}$-module isomorphism $\kappa :\mathbb{T}_{m| n;d}\rightarrow \mathbb{V}^{\otimes d}$ which sends $x_{\lambda^{(0)}}y_{\lambda^{(1)}}h$ to $M_{f^{\lambda}}\cdot h$ for any $\lambda\in \Lambda_{m| n;d}$ and $h\in \mathbb{H}$.
\end{lem}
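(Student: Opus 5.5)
To prove Lemma \ref{mathtmnd}, I would define $\kappa$ summand by summand. For each $\lambda\in\Lambda_{m|n;d}$, let $\kappa_\lambda: x_{\lambda^{(0)}}y_{\lambda^{(1)}}\mathbb{H}\to\mathbb{V}^{\otimes d}$ be $x_{\lambda^{(0)}}y_{\lambda^{(1)}}h\mapsto M_{f^\lambda}\cdot h$, and set $\kappa=\oplus_\lambda\kappa_\lambda$. Uniqueness is immediate, since each summand is generated by $x_{\lambda^{(0)}}y_{\lambda^{(1)}}$ as a right $\mathbb{H}$-module.

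The content is well-definedness. Because $x_{\lambda^{(0)}}y_{\lambda^{(1)}}\mathbb{H}$ is a right ideal rather than a free cyclic module, I need $M_{f^\lambda}\cdot h=0$ whenever $x_{\lambda^{(0)}}y_{\lambda^{(1)}}h=0$. I would do this in three steps.

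\textbf{Step 1: $M_{f^\lambda}$ has the same eigen-behaviour as $x_{\lambda^{(0)}}y_{\lambda^{(1)}}$.} For each generator $s_i\in W_\lambda$, I check directly from \eqref{actions}, the $T_0$-formulas and the definition of $T_d$ that $M_{f^\lambda}\cdot T_i$ is the scalar multiple of $M_{f^\lambda}$ given in Lemma \ref{lamadb}.
\begin{itemize}
\item For $1\le i\le d-1$ inside a block, $f^\lambda_i=f^\lambda_{i+1}$ and $\mathbf c=0$. The formula gives $q^{-1}$ for even entries and $-q$ for odd entries, matching Lemma \ref{lamadb} because $\widetilde\lambda_m$ separates the entries $\le m$ from those $\ge m+1$.
\item For $i=0$ with $\lambda_0>0$, we have $f^\lambda_1=0$, and the $T_0$-formula gives $Q_0^{-1}$.
\item For $i=d$ with $\lambda_{m+n+1}>0$, we have $f^\lambda_d=m+n+1$. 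Here I must compute $M_{f^\lambda}\cdot Q_0^{-1}X_dT_{d-1}^{-1}\cdots T_0^{-1}\cdots T_{d-1}^{-1}$. It is cleaner to use the $d=1$ computation $v_{m+n+1}\cdot T_0=-v_{-m-n-1}$ from the Remark (the choice $A=-1$), which should yield the eigenvalue $-Q_0^{-1}$.
\end{itemize}

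\textbf{Step 2: the annihilator of $M_{f^\lambda}$ contains that of $x_{\lambda^{(0)}}y_{\lambda^{(1)}}$.} By Step 1, $M_{f^\lambda}$ spans a one-dimensional module for the finite parabolic Hecke subalgebra $\mathbb{H}_\lambda$ of $W_\lambda$, with the same character $\chi$ as $x_{\lambda^{(0)}}y_{\lambda^{(1)}}$. The element $x_{\lambda^{(0)}}y_{\lambda^{(1)}}$ is, up to a nonzero scalar, the $\chi$-idempotent-type element of $\mathbb{H}_\lambda$. I would then use Lemma \ref{mamnbb}: the elements $x_{\lambda^{(0)}}y_{\lambda^{(1)}}T_g$ with $g\in\mathcal D_\lambda$ form a basis of the summand. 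Hence the right ideal is isomorphic to the induced module $\chi\otimes_{\mathbb{H}_\lambda}\mathbb{H}$, since $\mathbb{H}$ is free over $\mathbb{H}_\lambda$ with basis $T_g$, $g\in\mathcal D_\lambda$. By the universal property of induction, Step 1 gives a well-defined homomorphism $\kappa_\lambda$ with $x_{\lambda^{(0)}}y_{\lambda^{(1)}}T_g\mapsto M_{f^\lambda}\cdot T_g$.

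\textbf{Step 3: bijectivity and grading.} $\kappa$ sends the basis of Lemma \ref{mamnbb} to the basis of Lemma \ref{votthma}, so it is bijective. For the grading, the $\mathbb H$-action preserves the parity of $M_f$: $T_i$ and $X_a$ only permute, negate or shift indices by multiples of $N$, and these preserve parities because $p(-a)=p(a)$ and $p$ is $N$-periodic. The parity of $M_{f^\lambda}$ is $\sum_{i\ge m+1}\lambda_i=p(\lambda)$, so $\kappa$ is grading-preserving.

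The main obstacle is the $T_d$ eigenvalue in Step 1, which involves the inverse of a long product. I would first attempt it by reducing to $d=1$, where $T_1=Q_0^{-1}X_1T_0^{-1}$, and then extending via the commutation of $T_{d-1}^{-1}\cdots T_1^{-1}$ with the other positions.
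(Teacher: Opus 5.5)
Your proposal follows the paper's proof. You match the $T_{s_i}$-eigenvalues of $M_{f^\lambda}$ with those of $x_{\lambda^{(0)}}y_{\lambda^{(1)}}$ from Lemma \ref{lamadb}; the only real computation is $s_d$, where the sign choice $A=-1$ of the Remark is exactly what produces $-Q_0^{-1}$. You then deduce well-definedness, and get bijectivity by sending the basis of Lemma \ref{mamnbb} to that of Lemma \ref{votthma}.

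Your induced-module argument in Step 2 and the parity check in Step 3 make explicit what the paper leaves implicit. Your vaguer reduction for $T_d$ also goes through, via $T_d=\sigma\,(Q_0^{-1}X_1T_0^{-1})\,\sigma^{-1}$ with $\sigma=T_{d-1}\cdots T_1$ (from $X_d=T_{d-1}\cdots T_1X_1T_1\cdots T_{d-1}$). This works because $M_{f^\lambda}\cdot\sigma$ is a scalar multiple of a pure tensor with first factor $v_{m+n+1}$, and $X_1,T_0$ act only on that factor, so it is shorter than the paper's four-step computation.
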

\begin{proof}
For any $s_{i}\in W_{\lambda}$, by Lemma \ref{lamadb} we have $x_{\lambda^{(0)}}y_{\lambda^{(1)}}T_{s_{i}}= Ax_{\lambda^{(0)}}y_{\lambda^{(1)}}$ for $A=Q_{0}^{-1}, q^{-1}, -q$ or $-Q_{0}^{-1}$. We need to show that $M_{f^{\lambda}}\cdot T_{s_{i}} =A M_{f^{\lambda}}$. The case when $i\neq d$ is easy. When $i=d$, we need to show
\begin{align*}
M_{f^{\lambda}}\cdot X_{d}=-M_{f^{\lambda}}\cdot T_{d-1}\cdots T_{1}T_{0}T_{1}\cdots T_{d-1}.
\end{align*}
We have
\begin{align*}
M_{f^{\lambda}}\cdot (T_{d-1}\cdots T_{1})=(-q)^{\lambda_{m+n+1}-1}(-1)^{\sum_{i=m+1}^{m+n}\lambda_{i}}v_{m+n+1}\otimes v_{0}^{\otimes \lambda_{0}}\otimes v_{1}^{\otimes \lambda_{1}}\otimes \cdots \otimes v_{m+n+1}^{\otimes (\lambda_{m+n+1}-1)},
\end{align*}
\begin{align*}
v_{m+n+1}\otimes v_{0}^{\otimes \lambda_{0}}\otimes v_{1}^{\otimes \lambda_{1}}\otimes \cdots \otimes v_{m+n+1}^{\otimes \lambda_{m+n+1}-1}\cdot T_{0}=-(v_{m+n+1}\otimes v_{0}^{\otimes \lambda_{0}}\otimes v_{1}^{\otimes \lambda_{1}}\otimes \cdots \otimes v_{m+n+1}^{\otimes (\lambda_{m+n+1}-1)})z_{1}^{-1},
\end{align*}
\begin{align*}
&(v_{m+n+1}\otimes v_{0}^{\otimes \lambda_{0}}\otimes v_{1}^{\otimes \lambda_{1}}\otimes \cdots \otimes v_{m+n+1}^{\otimes (\lambda_{m+n+1}-1)})z_{1}^{-1}\cdot (T_{1}\cdots T_{d-\lambda_{m+n+1}})\\
=&(-1)^{\sum_{i=m+1}^{m+n}\lambda_{i}}(v_{0}^{\otimes \lambda_{0}}\otimes v_{1}^{\otimes \lambda_{1}}\otimes \cdots \otimes v_{m+n+1}^{\otimes \lambda_{m+n+1}})z_{d-\lambda_{m+n+1}+1}^{-1},
\end{align*}
\begin{align*}
&(v_{0}^{\otimes \lambda_{0}}\otimes v_{1}^{\otimes \lambda_{1}}\otimes \cdots \otimes v_{m+n+1}^{\otimes \lambda_{m+n+1}})z_{d-\lambda_{m+n+1}+1}^{-1}\cdot  (T_{d-\lambda_{m+n+1}+1}\cdots T_{d-1})\\
=&(-q^{-1})^{\lambda_{m+n+1}-1}(v_{0}^{\otimes \lambda_{0}}\otimes v_{1}^{\otimes \lambda_{1}}\otimes \cdots \otimes v_{m+n+1}^{\otimes \lambda_{m+n+1}})z_{d}^{-1}.
\end{align*}
Thus, we obtain the desired equality. Therefore, the morphism $\kappa$, sending $x_{\lambda^{(0)}}y_{\lambda^{(1)}}h$ to $M_{f^{\lambda}}\cdot h$, is a well-defined right $\mathbb{H}$-module homomorphism.
Combining Lemma \ref{mamnbb} with Lemma \ref{votthma}, we see that $\kappa$ is an isomorphism.
\end{proof}

\subsection{Schur--Weyl duality}
In this subsection we adhere to the assumption of \S\ref{comm} such that Theorem \ref{hatcuth} holds. By Lemma \ref{mathtmnd}, we have an isomorphism of superalgebras $\mathbb{S}_{m| n;d}^{\jmath} \cong \mathrm{End}_{\mathbb{H}}(\mathbb{V}^{\otimes d})$. By this identification and Theorem \ref{hatcuth}, we obtain an $\F$-algebra homomorphism
\begin{align*}
\Psi :\widehat{U}^{\jmath}(\mathfrak{sl})\rightarrow \mathbb{S}_{m| n;d}^{\jmath}.
\end{align*}
Next we want to show that $\Psi$ is surjective.

We define two families of maps $\widetilde{e}_{i}, \widetilde{f}_{i} :\Lambda_{m| n;d}\rightarrow \Lambda_{m| n;d}\sqcup \{0\}$, where $0$ is a formal symbol, by
\begin{align*}
\widetilde{e}_{i}(\lambda)=\left\{
\begin{array}{ll}
(\lambda_{0}, \ldots,\lambda_{i-1},\lambda_{i}+1,\lambda_{i+1}-1,\lambda_{i+2},\ldots,\lambda_{m+n+1})&\hspace{0.35cm}\text{if $\lambda_{i+1}> 0$},\\[0.3em]
0&\hspace{0.35cm}\text{if $\lambda_{i+1}= 0$};
\end{array}
\right.
\end{align*}
\begin{align*}
\widetilde{f}_{i}(\lambda)=\left\{
\begin{array}{ll}
(\lambda_{0}, \ldots,\lambda_{i-1},\lambda_{i}-1,\lambda_{i+1}+1,\lambda_{i+2},\ldots,\lambda_{m+n+1})&\hspace{0.35cm}\text{if $\lambda_{i}> 0$},\\[0.3em]
0&\hspace{0.35cm}\text{if $\lambda_{i}= 0$}.
\end{array}
\right.
\end{align*}
It is understood that $M_{f^{0}}=\{0\}$ and $\phi_{0, \mu}^{g}=0=\phi_{\lambda, 0}^{g}$. Then we have the following result.

\begin{lem}\label{bibmn}
We have
\begin{align*}
\Psi(B_{i})=\sum\limits_{\lambda\in \Lambda_{m| n;d}}q^{d_{i+1}\lambda_{i+1}}\phi_{\widetilde{f}_{i}(\lambda), \lambda}^{e},\qquad \text{for $1\leq i\leq m+n$},
\end{align*}
\begin{align*}
\Psi(B_{i})=\sum\limits_{\lambda\in \Lambda_{m| n;d}}\varsigma_{i}q^{d_{\tau i}\lambda_{\tau i}}\phi_{\widetilde{e}_{\tau i}(\lambda), \lambda}^{e},\qquad \text{for $m+n+2\leq i\leq 2m+2n+1$},
\end{align*}
\begin{align*}
\Psi(B_{0})=\sum\limits_{\lambda\in \Lambda_{m| n;d}}q^{\lambda_{1}}\phi_{\widetilde{f}_{0}(\lambda), \lambda}^{e},
\end{align*}
\begin{align*}
\Psi(B_{m+n+1})=\sum\limits_{\lambda\in \Lambda_{m| n;d}}q^{-\lambda_{m+n}-\lambda_{m+n+1}+1}\phi_{\widetilde{e}_{m+n}(\lambda), \lambda}^{e}.
\end{align*}
\end{lem}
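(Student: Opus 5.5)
The plan is to compare both sides on the generators $x_{\mu^{(0)}}y_{\mu^{(1)}}$ of $\mathbb{T}_{m|n;d}$, transported through the isomorphism $\kappa$ of Lemma \ref{mathtmnd}. Both sides are right $\mathbb{H}$-module endomorphisms of $\mathbb{V}^{\otimes d}$: the left side by Theorem \ref{hatcuth}, the right side by construction. Each summand $x_{\mu^{(0)}}y_{\mu^{(1)}}\mathbb{H}$ is cyclic. So it suffices to check, for every $\mu\in\Lambda_{m|n;d}$, that
\[
B_i M_{f^{\mu}}=\kappa\bigl(\text{right-hand side applied to }x_{\mu^{(0)}}y_{\mu^{(1)}}\bigr).
\]
Since $\phi^{e}_{\lambda,\mu}$ kills all summands except the $\mu$-th, only the term $\lambda=\mu$ survives. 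The right-hand side then becomes a scalar times $\kappa(T_{W_{\nu}W_{\mu}})$, where $\nu=\widetilde{f}_i(\mu)$ or $\widetilde{e}_{\tau i}(\mu)$. Here
\[
T_{W_\nu W_\mu}=x_{\nu^{(0)}}y_{\nu^{(1)}}\,T_{\mathcal{D}_\delta\cap W_\mu},\qquad W_\delta=W_\nu\cap W_\mu .
\]
Applying $\kappa$ gives $M_{f^{\nu}}\cdot T_{\mathcal{D}_\delta\cap W_\mu}$.

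Next, I compute the left side $B_iM_{f^\mu}$ from formula \eqref{bimf} of Lemma \ref{auivd}, splitting into cases.
\begin{itemize}
\item For $1\le i\le m+n$, all entries of $f^\mu$ lie in $[0..m+n+1]$, so the second sum (which needs $f_a\equiv -i$) is empty. The first sum runs over the $\mu_i$ positions $a$ holding $v_i$. Each term replaces $v_i$ by $v_{i+1}$ with a factor $\prod_{r>a}\kappa_i(f_r)^{-1}$ and a sign $\epsilon_{i,a}(f)$.
\item For $m+n+2\le i\le 2m+2n+1$, we have $-i\equiv \tau i+1\pmod N$. Only the second sum contributes: $v_{\tau i+1}$ is lowered to $v_{\tau i}$, with coefficient $\varsigma_i$ times a product of $\kappa$'s.
\item For $i=0$ and $i=m+n+1$, Lemma \ref{uimahtvactoni} shows that $B_i$ produces two terms. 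For $i=0$ the term $v_{-1}$ lies outside $[0..m+n+1]$. For $i=m+n+1$ the term $v_{m+n+2}=v_{-m-n}z_1$-type vector likewise lies outside, while $v_{m+n}$ stays inside.
\end{itemize}

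The core step is the identity
\[
M_{f^{\nu}}\cdot T_{\mathcal{D}_\delta\cap W_\mu}=\sum_{a}(\text{coefficient})\,M_{f^{\nu}\cdot w_a}.
\]
Here $\mathcal{D}_\delta\cap W_\mu$ consists of the shortest cycles moving the changed entry into each possible slot of the $\mu_i$-block (or the $\mu_{\tau i+1}$-block). I prove it by induction along the chain $s_k s_{k+1}\cdots$ using \eqref{actions}. Because entries of $f^\nu$ are increasing, only the cases $\overline{f}_k<\overline{f}_{k+1}$ and $\overline{f}_k=\overline{f}_{k+1}$ occur. The case $\overline{f}_k<\overline{f}_{k+1}$ gives a pure permutation with Koszul sign $(-1)^{p p'}$. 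These Koszul signs must match $\epsilon_{i,a}(f)$, and the weights $q_w^{\mp1}$, $(-1)^{\ell(w^{(1)})}$ in $T_{\mathcal{D}_\delta\cap W_\mu}$ must match the powers $q^{\pm d}$ coming from $\kappa_i$. This is the parity-sensitive bookkeeping: odd blocks ($d=-1$) give $-q$ eigenvalues, matching Lemma \ref{lamadb}. After normalising, the global leftover scalar comes out as $q^{d_{i+1}\mu_{i+1}}$. This scalar comes from the $\kappa_i(f_r)^{-1}=q^{d_{i+1}}$ factors at positions $r$ holding $v_{i+1}$, which all lie to the right of $a$.

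For $i=0$ and $i=m+n+1$, the extra term must be rewritten through the $T_0$ and $T_d$ actions. The vector $v_{-1}$ in the first slot equals $-v_1\cdot T_0$ up to the $Q$'s, which the specialisation $\varsigma_0=qQ_1^{-1}$ cancels. Similarly, $v_{m+n+2}$ at the last slot arises from $v_{m+n}$ via $T_d$ and $X_d$, as in the proof of Lemma \ref{mathtmnd}, using $\varsigma_{m+n+1}=q^{-1}Q_0^{-1}Q_1$. After this rewriting the two terms combine into the single $\phi^e$ claimed, and the exponent shifts to $q^{\lambda_1}$, resp. $q^{-\lambda_{m+n}-\lambda_{m+n+1}+1}$. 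I expect these two boundary cases to be the main obstacle. The $T_0$/$T_d$ formulas involve the $Q_0,Q_1$ corrections, so the cancellation must be checked carefully, including the sign convention $A=-1$ chosen in the Remark.
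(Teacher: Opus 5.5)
Your route is the paper's route. You evaluate both sides on the generators $M_{f^{\mu}}$, read off the left side from \eqref{bimf}, and compute $M_{f^{\nu}}\cdot T_{\mathcal{D}_\delta\cap W_\mu}$ along chains of $T_k$'s, where only the case $\overline f_k<\overline f_{k+1}$ of \eqref{actions} occurs. The boundary cases $i=0$ and $i=m+n+1$ are handled through $T_0$, $T_d$ and the specialised $\varsigma$'s.

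\textbf{The case $i=2m+2n+1=N-1$ fails as stated.} When you actually evaluate the "product of $\kappa$'s" for the second family, the claimed scalar is wrong at this one value of $i$, so your argument cannot close there. Since $i+1\equiv 0 \pmod N$, $\kappa_{N-1}$ is not trivial on the entries of $f^\mu$: $\kappa_{N-1}(0)=q^{-d_N}=q^{-1}$. Each position $r<a$ holding $v_0$ therefore contributes $\kappa_{N-1}(0)^{-1}\kappa_0(0)=q^{2}$. This is just $\mathrm{k}_0v_0=q^2v_0$, consistent with $b_0(\lambda)=2\lambda_0-\lambda_1$ in Lemma \ref{phismn}. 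The result is $\Psi(B_{N-1})=\sum_\lambda\varsigma_{N-1}q^{2\lambda_0}\phi^e_{\widetilde e_0(\lambda),\lambda}$, not $\varsigma_{N-1}q^{\lambda_0}$.

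A concrete check: take $d=2$ and $f^\lambda=(0,1)$. Only the summand $\varsigma_{N-1}K_0K_{N-1}^{-1}\otimes E_0K_{N-1}^{-1}$ of $\Delta(B_{N-1})$ acts nontrivially, giving $\varsigma_{N-1}q^{2}\,v_0\otimes v_0$. Meanwhile $\phi^e_{\widetilde e_0(\lambda),\lambda}(M_{f^\lambda})=v_0\otimes v_0$, because the coset set is $\{e\}$.

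The paper's proof makes the same slip. Its uniform exponent $d_{N-i-1}\lambda_{N-i-1}-d_{N-i}(t-1)$ tacitly assumes $\kappa_i\equiv 1$ on $[0..m+n+1]$. The error is harmless for Proposition \ref{surjve}, which only needs nonzero scalars, but you should state and prove the corrected formula.

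\textbf{Two points on the boundary cases.}

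For $i=0$, your description of the mechanism is off. In fact $v_1\cdot T_0=(-1)^{p(1)}v_{-1}=v_{-1}$ exactly, with no sign and no $Q$'s. The $Q_1^{-1}$ that $\varsigma_0=qQ_1^{-1}$ compensates is the weight $q_{s_0}^{-1}$ inside $T_{\mathcal D_\delta\cap W_\mu}$. Here $\mathcal D_\delta\cap W_\mu$ is not just type $A$ cycles: it consists of the $2\mu_0$ elements $s_{\mu_0-1}\cdots s_t$ and $s_{\mu_0-1}\cdots s_1s_0s_1\cdots s_{t-1}$, for $1\le t\le\mu_0$.

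For $i=m+n+1$, your proposal only names the difficulty, and this is where the real work lies. The paper expands the $s_d$-containing coset representatives using
\[
T_d=Q_0^{-1}X_dT_{d-1}^{-1}\cdots T_1^{-1}T_0^{-1}T_1^{-1}\cdots T_{d-1}^{-1}.
\]
This yields the $M_{f^\lambda+e}$ terms with coefficient $q^{\lambda_r+2\lambda_{r+1}-t-1}$. It also yields correction terms in $M_{f^\lambda-e}$, with $r=m+n$, and these telescope to $q^{2\lambda_{r+1}+t-3}Q_0^{-1}Q_1$. That is exactly where $\varsigma_{m+n+1}=q^{-1}Q_0^{-1}Q_1$ enters, and the computation has to be carried out for the case to be complete.
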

\begin{proof}
Assume that $1\leq i\leq m+n$. By \eqref{bimf}, we have
\begin{align*}
\Psi(B_{i})M_{f^{\lambda}}=&\sum\limits_{1\leq a\leq d\atop f^{\lambda}_{a}\equiv i}\epsilon_{i, a}(f^{\lambda})\Big(\prod_{r> a}\kappa_{i}(f^{\lambda}_{r})^{-1}\Big)M_{f^{\lambda}+e_{a}}\\
=&\sum\limits_{t=1}^{\lambda_{i}}(-1)^{|\alpha_i|\sum\limits_{s< \widetilde{\lambda}_{i-1}+t}
p(f^{\lambda}_{s})}q^{-d_{i}(\lambda_{i}-t)+d_{i+1}\lambda_{i+1}}\\
&\times M_{(0^{\lambda_{0}},\ldots,(i-1)^{\lambda_{i-1}},i^{t-1},i+1,i^{\lambda_{i}-t},(i+1)^{\lambda_{i+1}},\ldots,(m+n+1)^{\lambda_{m+n+1}})}.
\end{align*}

When $1\leq i< m$, we have $d_{i}=d_{i+1}=1$ and $|\alpha_i|=\overline{0}$. Thus, in this case, $\Psi(B_{i})M_{f^{\lambda}}=\sum\limits_{t=1}^{\lambda_{i}}q^{-(\lambda_{i}-t)+\lambda_{i+1}}M_{f^{\lambda}+e_{\widetilde{\lambda}_{i-1}+t}}$. On the other hand,
\begin{align*}
\phi_{\widetilde{f}_{i}(\lambda), \lambda}^{e}(M_{f^{\lambda}})=\sum\limits_{w=w^{(0)}w^{(1)}\in \mathcal{D}_{\widetilde{f}_{i}(\lambda)}\cap W_{\lambda}}(-1)^{\ell(w^{(1)})}q_{w^{(0)}}^{-1}q_{w^{(1)}}M_{\widetilde{f}_{i}(\lambda)}T_{w}.
\end{align*}
Note that $\mathcal{D}_{\widetilde{f}_{i}(\lambda)}\cap W_{\lambda}=\big\{s_{\widetilde{\lambda}_{i}-1}s_{\widetilde{\lambda}_{i}-2}\cdots s_{\widetilde{\lambda}_{i-1}+t}\big\}^{\lambda_{i}}_{t=1}$. Hence, we have
\begin{align*}
\phi_{\widetilde{f}_{i}(\lambda), \lambda}^{e}(M_{f^{\lambda}})=\sum\limits_{t=1}^{\lambda_{i}}q^{-(\lambda_{i}-t)}M_{\widetilde{f}_{i}(\lambda)}T_{\widetilde{\lambda}_{i}-1}T_{\widetilde{\lambda}_{i}-2}\cdots T_{\widetilde{\lambda}_{i-1}+t}=\sum\limits_{t=1}^{\lambda_{i}}q^{-(\lambda_{i}-t)}M_{f^{\lambda}+e_{\widetilde{\lambda}_{i-1}+t}}.
\end{align*}
Thus, we obtain, when $1\leq i< m$,
\begin{align*}
\Psi(B_{i})=\sum\limits_{\lambda\in \Lambda_{m| n;d}}q^{\lambda_{i+1}}\phi_{\widetilde{f}_{i}(\lambda), \lambda}^{e}.
\end{align*}

When $i=m$, we have $d_{m}=1$, $d_{m+1}=-1$, $|\alpha_m|=\overline{1}$ and $p(f^{\lambda}_{s})=\overline{0}$ for any $s< \widetilde{\lambda}_{m}$. Thus, we have
$\Psi(B_{m})M_{f^{\lambda}}=\sum\limits_{t=1}^{\lambda_{m}}q^{-(\lambda_{m}-t)-\lambda_{m+1}}M_{f^{\lambda}+e_{\widetilde{\lambda}_{i-1}+t}}$. Moreover, we have
\begin{align*}
\phi_{\widetilde{f}_{m}(\lambda), \lambda}^{e}(M_{f^{\lambda}})=\sum\limits_{t=1}^{\lambda_{m}}q^{-(\lambda_{m}-t)}M_{\widetilde{f}_{m}(\lambda)}T_{\widetilde{\lambda}_{m}-1}T_{\widetilde{\lambda}_{m}-2}\cdots T_{\widetilde{\lambda}_{m-1}+t}=\sum\limits_{t=1}^{\lambda_{m}}q^{-(\lambda_{m}-t)}M_{f^{\lambda}+e_{\widetilde{\lambda}_{m-1}+t}}.
\end{align*}
Hence we obtain
\begin{align*}
\Psi(B_{m})=\sum\limits_{\lambda\in \Lambda_{m| n;d}}q^{-\lambda_{m+1}}\phi_{\widetilde{f}_{m}(\lambda), \lambda}^{e}.
\end{align*}

When $m+1\leq i\leq m+n$, we have $d_{i}=d_{i+1}=-1$ and $|\alpha_i|=\overline{0}$. Thus, in this case, $\Psi(B_{i})M_{f^{\lambda}}=\sum\limits_{t=1}^{\lambda_{i}}q^{(\lambda_{i}-t)-\lambda_{i+1}}M_{f^{\lambda}+e_{\widetilde{\lambda}_{i-1}+t}}$. Meanwhile, we have
\begin{align*}
\phi_{\widetilde{f}_{i}(\lambda), \lambda}^{e}(M_{f^{\lambda}})=\sum\limits_{t=1}^{\lambda_{i}}(-1)^{\lambda_{i}-t}q^{\lambda_{i}-t}M_{\widetilde{f}_{i}(\lambda)}T_{\widetilde{\lambda}_{i}-1}T_{\widetilde{\lambda}_{i}-2}\cdots T_{\widetilde{\lambda}_{i-1}+t}=\sum\limits_{t=1}^{\lambda_{i}}q^{\lambda_{i}-t}M_{f^{\lambda}+e_{\widetilde{\lambda}_{i-1}+t}}.
\end{align*}
Hence, when $m+1\leq i\leq m+n$ we have
\begin{align*}
\Psi(B_{i})=\sum\limits_{\lambda\in \Lambda_{m| n;d}}q^{-\lambda_{i+1}}\phi_{\widetilde{f}_{i}(\lambda), \lambda}^{e}.
\end{align*}

Assume that $m+n+2\leq i\leq 2m+2n+1$. By \eqref{bimf}, we have
\begin{align*}
\Psi(B_{i})M_{f^{\lambda}}=&\varsigma_{i}\sum\limits_{1\leq a\leq d\atop f^{\lambda}_{a}\equiv -i}\epsilon_{i, a}(f^{\lambda})\Big(\prod_{r< a}\kappa_{i}(f^{\lambda}_{r})^{-1}\kappa_{\tau i}(f^{\lambda}_{r})\Big)\kappa_{i}(f^{\lambda}_{a})^{-1}\Big(\prod_{r> a}\kappa_{i}(f^{\lambda}_{r})^{-1}\Big)M_{f^{\lambda}-e_{a}}\\
=&\varsigma_{i}\sum\limits_{t=1}^{\lambda_{N-i}}(-1)^{|\alpha_i|\sum\limits_{s< \widetilde{\lambda}_{N-i-1}+t}
p(f^{\lambda}_{s})}q^{d_{N-i-1}\lambda_{N-i-1}-d_{N-i}(t-1)}\\
&\times M_{(0^{\lambda_{0}},\ldots,(N-i-1)^{\lambda_{N-i-1}},(N-i)^{t-1},N-i-1,(N-i)^{\lambda_{N-i}-t},(N-i+1)^{\lambda_{N-i+1}},\ldots,(m+n+1)^{\lambda_{m+n+1}})}.
\end{align*}

When $m+n+2\leq i\leq m+2n$, we have $d_{N-i-1}=d_{N-i}=-1$ and $|\alpha_i|=\overline{0}$. Thus, in this case, $\Psi(B_{i})M_{f^{\lambda}}=\varsigma_{i}\sum\limits_{t=1}^{\lambda_{N-i}}q^{-\lambda_{N-i-1}+(t-1)}M_{f^{\lambda}-e_{\widetilde{\lambda}_{N-i-1}+t}}$. On the other hand,
\begin{align*}
\phi_{\widetilde{e}_{N-i-1}(\lambda), \lambda}^{e}(M_{f^{\lambda}})=\sum\limits_{w=w^{(0)}w^{(1)}\in \mathcal{D}_{\widetilde{e}_{N-i-1}(\lambda)}\cap W_{\lambda}}(-1)^{\ell(w^{(1)})}q_{w^{(0)}}^{-1}q_{w^{(1)}}M_{\widetilde{e}_{N-i-1}(\lambda)}T_{w}.
\end{align*}
Note that $\mathcal{D}_{\widetilde{e}_{N-i-1}(\lambda)}\cap W_{\lambda}=\big\{s_{\widetilde{\lambda}_{N-i-1}+1}s_{\widetilde{\lambda}_{N-i-1}+2}\cdots s_{\widetilde{\lambda}_{N-i-1}+t-1}\big\}^{\lambda_{N-i}}_{t=1}$. Hence, we have
\begin{align*}
\phi_{\widetilde{e}_{N-i-1}(\lambda), \lambda}^{e}(M_{f^{\lambda}})=&\sum\limits_{t=1}^{\lambda_{N-i}}(-1)^{t-1}q^{t-1}M_{\widetilde{e}_{N-i-1}(\lambda)}T_{\widetilde{\lambda}_{N-i-1}+1}T_{\widetilde{\lambda}_{N-i-1}+2}\cdots T_{\widetilde{\lambda}_{N-i-1}+t-1}\\
=&\sum\limits_{t=1}^{\lambda_{N-i}}q^{t-1}M_{f^{\lambda}-e_{\widetilde{\lambda}_{N-i-1}+t}}.
\end{align*}
Thus, we obtain, when $m+n+2\leq i\leq m+2n$,
\begin{align*}
\Psi(B_{i})=\sum\limits_{\lambda\in \Lambda_{m| n;d}}\varsigma_{i}q^{-\lambda_{N-i-1}}\phi_{\widetilde{e}_{N-i-1}(\lambda), \lambda}^{e}.
\end{align*}

When $i=m+2n+1$, that is, $N-i-1=m$, we have $d_{N-i-1}=1$, $d_{N-i}=-1$ and $|\alpha_i|=\overline{1}$. Thus, we have $\Psi(B_{i})M_{f^{\lambda}}=\varsigma_{i}\sum\limits_{t=1}^{\lambda_{N-i}}(-1)^{\sum\limits_{s< \widetilde{\lambda}_{m}+t}p(f^{\lambda}_{s})}q^{\lambda_{N-i-1}+(t-1)}M_{f^{\lambda}-e_{\widetilde{\lambda}_{m}+t}}=
\varsigma_{i}\sum\limits_{t=1}^{\lambda_{N-i}}(-1)^{t-1}q^{\lambda_{m}+(t-1)}M_{f^{\lambda}-e_{\widetilde{\lambda}_{m}+t}}$. Meanwhile, we have
\begin{align*}
\phi_{\widetilde{e}_{m}(\lambda), \lambda}^{e}(M_{f^{\lambda}})=&\sum\limits_{t=1}^{\lambda_{N-i}}(-1)^{t-1}q^{t-1}M_{\widetilde{e}_{m}(\lambda)}T_{\widetilde{\lambda}_{m}+1}T_{\widetilde{\lambda}_{m}+2}\cdots T_{\widetilde{\lambda}_{m}+t-1}\\
=&\sum\limits_{t=1}^{\lambda_{N-i}}(-1)^{t-1}q^{t-1}M_{f^{\lambda}-e_{\widetilde{\lambda}_{m}+t}}.
\end{align*}
Thus, when $i=m+2n+1$ we have
\begin{align*}
\Psi(B_{i})=\sum\limits_{\lambda\in \Lambda_{m| n;d}}\varsigma_{i}q^{\lambda_{m}}\phi_{\widetilde{e}_{m}(\lambda), \lambda}^{e}.
\end{align*}

When $m+2n+2\leq i\leq 2m+2n+1$, we have $d_{N-i-1}=d_{N-i}=1$ and $|\alpha_i|=\overline{0}$. Thus, in this case,
$\Psi(B_{i})M_{f^{\lambda}}=\varsigma_{i}\sum\limits_{t=1}^{\lambda_{N-i}}q^{\lambda_{N-i-1}-(t-1)}M_{f^{\lambda}-e_{\widetilde{\lambda}_{N-i-1}+t}}$. Moreover, we have
\begin{align*}
\phi_{\widetilde{e}_{N-i-1}(\lambda), \lambda}^{e}(M_{f^{\lambda}})=&\sum\limits_{t=1}^{\lambda_{N-i}}q^{-(t-1)}M_{\widetilde{e}_{N-i-1}(\lambda)}T_{\widetilde{\lambda}_{N-i-1}+1}T_{\widetilde{\lambda}_{N-i-1}+2}\cdots T_{\widetilde{\lambda}_{N-i-1}+t-1}\\
=&\sum\limits_{t=1}^{\lambda_{N-i}}q^{-(t-1)}M_{f^{\lambda}-e_{\widetilde{\lambda}_{N-i-1}+t}}.
\end{align*}
Hence, when $m+2n+2\leq i\leq 2m+2n+1$, we have
\begin{align*}
\Psi(B_{i})=\sum\limits_{\lambda\in \Lambda_{m| n;d}}\varsigma_{i}q^{\lambda_{N-i-1}}\phi_{\widetilde{e}_{N-i-1}(\lambda), \lambda}^{e}.
\end{align*}

When $i=0$, we have $d_{0}=d_{1}=1$ and $|\alpha_0|=\overline{0}$. By \eqref{bimf}, we have
\begin{align*}
\Psi(B_{0})M_{f^{\lambda}}=&\sum\limits_{1\leq a\leq d\atop f^{\lambda}_{a}\equiv 0}\epsilon_{0, a}(f^{\lambda})\Big(\prod_{r> a}\kappa_{0}(f^{\lambda}_{r})^{-1}\Big)M_{f^{\lambda}+e_{a}}\\
&\notag+\varsigma_{0}\sum\limits_{1\leq a\leq d\atop f^{\lambda}_{a}\equiv 0}\epsilon_{0, a}(f^{\lambda})\Big(\prod_{r< a}\kappa_{0}(f^{\lambda}_{r})^{-1}\kappa_{N-1}(f^{\lambda}_{r})\Big)\kappa_{0}(f^{\lambda}_{a})^{-1}\Big(\prod_{r> a}\kappa_{0}(f^{\lambda}_{r})^{-1}\Big)M_{f^{\lambda}-e_{a}}\\
=&\sum\limits_{t=1}^{\lambda_{0}}q^{-(\lambda_{0}-t)+\lambda_{1}}M_{f^{\lambda}+e_{t}}+\varsigma_{0}\sum\limits_{t=1}^{\lambda_{0}}q^{-\lambda_{0}+\lambda_{1}-(t-1)}M_{f^{\lambda}-e_{t}}.
\end{align*}
On the other side, we have
\begin{align*}
\phi_{\widetilde{f}_{0}(\lambda), \lambda}^{e}(M_{f^{\lambda}})=\sum\limits_{w=w^{(0)}w^{(1)}\in \mathcal{D}_{\widetilde{f}_{0}(\lambda)}\cap W_{\lambda}}(-1)^{\ell(w^{(1)})}q_{w^{(0)}}^{-1}q_{w^{(1)}}M_{\widetilde{f}_{0}(\lambda)}T_{w}.
\end{align*}
Note that $\mathcal{D}_{\widetilde{f}_{0}(\lambda)}\cap W_{\lambda}=\big\{s_{\lambda_{0}-1}s_{\lambda_{0}-2}\cdots s_{t}\big\}^{\lambda_{0}}_{t=1}\sqcup \big\{s_{\lambda_{0}-1}s_{\lambda_{0}-2}\cdots s_{1}s_{0}s_{1}\cdots s_{t-1}\big\}^{\lambda_{0}}_{t=1}$. Hence, we have
\begin{align*}
\phi_{\widetilde{f}_{0}(\lambda), \lambda}^{e}(M_{f^{\lambda}})=&\sum\limits_{t=1}^{\lambda_{0}}q^{-(\lambda_{0}-t)}M_{\widetilde{f}_{0}(\lambda)}T_{\lambda_{0}-1}T_{\lambda_{0}-2}\cdots T_{t}\\
&+\sum\limits_{t=1}^{\lambda_{0}}q^{-(\lambda_{0}-1+t-1)}Q_{1}^{-1}M_{\widetilde{f}_{0}(\lambda)}T_{\lambda_{0}-1}T_{\lambda_{0}-2}\cdots T_{1}T_{0}T_{1}\cdots T_{t-1}\\
=&\sum\limits_{t=1}^{\lambda_{0}}q^{-(\lambda_{0}-t)}M_{f^{\lambda}+e_{t}}+\sum\limits_{t=1}^{\lambda_{0}}q^{-\lambda_{0}-t+2}Q_{1}^{-1}M_{f^{\lambda}-e_{t}}.
\end{align*}
Recall $\varsigma_0=qQ_1^{-1}$. Thus, we obtain
\begin{align*}
\Psi(B_{0})=\sum\limits_{\lambda\in \Lambda_{m| n;d}}q^{\lambda_{1}}\phi_{\widetilde{f}_{0}(\lambda), \lambda}^{e}.
\end{align*}

When $i=r+1=m+n+1$, we have $d_{r+1}=d_{r+2}=-1$ and $|\alpha_{r+1}|=\overline{0}$. By \eqref{bimf}, we have
\begin{align*}
\Psi(B_{r+1})M_{f^{\lambda}}=&\sum\limits_{1\leq a\leq d\atop f^{\lambda}_{a}\equiv r+1}\epsilon_{r+1, a}(f^{\lambda})\Big(\prod_{s> a}\kappa_{r+1}(f^{\lambda}_{s})^{-1}\Big)M_{f^{\lambda}+e_{a}}\\
&\notag+\varsigma_{r+1}\sum\limits_{1\leq a\leq d\atop f^{\lambda}_{a}\equiv r+1}\epsilon_{r+1, a}(f^{\lambda})\Big(\prod_{s< a}\kappa_{r+1}(f^{\lambda}_{s})^{-1}\kappa_{r}(f^{\lambda}_{s})\Big)\kappa_{r+1}(f^{\lambda}_{a})^{-1}\Big(\prod_{s> a}\kappa_{r+1}(f^{\lambda}_{s})^{-1}\Big)M_{f^{\lambda}-e_{a}}\\
=&\sum\limits_{t=1}^{\lambda_{r+1}}q^{\lambda_{r+1}-t}M_{f^{\lambda}+e_{\widetilde{\lambda}_{r}+t}}+\varsigma_{r+1}\sum\limits_{t=1}^{\lambda_{r+1}}
q^{-\lambda_{r}+\lambda_{r+1}+(t-1)}M_{f^{\lambda}-e_{\widetilde{\lambda}_{r}+t}}.
\end{align*}
On the other hand, we have
\begin{align*}
\phi_{\widetilde{e}_{r}(\lambda), \lambda}^{e}(M_{f^{\lambda}})=\sum\limits_{w=w^{(0)}w^{(1)}\in \mathcal{D}_{\widetilde{e}_{r}(\lambda)}\cap W_{\lambda}}(-1)^{\ell(w^{(1)})}q_{w^{(0)}}^{-1}q_{w^{(1)}}M_{\widetilde{e}_{r}(\lambda)}T_{w}.
\end{align*}
Note that $\mathcal{D}_{\widetilde{e}_{r}(\lambda)}\cap W_{\lambda}=\big\{s_{\widetilde{\lambda}_{r}+1}s_{\widetilde{\lambda}_{r}+2}\cdots s_{\widetilde{\lambda}_{r}+t-1}\big\}^{\lambda_{r+1}}_{t=1}\sqcup \big\{s_{\widetilde{\lambda}_{r}+1}\cdots s_{d-1}s_{d}s_{d-1}\cdots s_{\widetilde{\lambda}_{r}+t}\big\}^{\lambda_{r+1}}_{t=1}$. Hence, we have
\begin{align*}
\phi_{\widetilde{e}_{r}(\lambda), \lambda}^{e}(M_{f^{\lambda}})=&\sum\limits_{t=1}^{\lambda_{r+1}}(-1)^{t-1}q^{t-1}M_{\widetilde{e}_{r}(\lambda)}T_{\widetilde{\lambda}_{r}+1}T_{\widetilde{\lambda}_{r}+2}\cdots T_{\widetilde{\lambda}_{r}+t-1}\\
&+\sum\limits_{t=1}^{\lambda_{r+1}}(-1)^{2\lambda_{r+1}-t}q^{2\lambda_{r+1}-t-1}Q_{1}M_{\widetilde{e}_{r}(\lambda)}T_{\widetilde{\lambda}_{r}+1}\cdots T_{d-1}T_{d}T_{d-1}\cdots T_{\widetilde{\lambda}_{r}+t}.
\end{align*}
We have $M_{\widetilde{e}_{r}(\lambda)}T_{\widetilde{\lambda}_{r}+1}T_{\widetilde{\lambda}_{r}+2}\cdots T_{\widetilde{\lambda}_{r}+t-1}=(-1)^{t-1}M_{f^{\lambda}-e_{\widetilde{\lambda}_{r}+t}}$. Moreover,

\begin{align*}
M_{\widetilde{e}_{r}(\lambda)}T_{\widetilde{\lambda}_{r}+1}\cdots T_{d-1}T_{d}T_{d-1}\cdots T_{\widetilde{\lambda}_{r}+t}=A+B+C+D,
\end{align*}
where
\begin{align*}
A=&Q_{1}^{-1}M_{\widetilde{e}_{r}(\lambda)}T_{\widetilde{\lambda}_{r}}\cdots T_{1}T_{0}X_{1}^{-1}T_{1}^{-1}\cdots T_{\widetilde{\lambda}_{r}+t-1}^{-1}\\
=&Q_{1}^{-1}(-q)^{\lambda_{r}}(-1)^{\lambda_{m+1}+\cdots+\lambda_{r-1}}(-1)(-1)^{\lambda_{m+1}+\cdots+\lambda_{r-1}+\lambda_{r}+(t-1)}M_{f^{\lambda}+e_{\widetilde{\lambda}_{r}+t}}\\
=&Q_{1}^{-1}q^{\lambda_{r}}(-1)^{t}M_{f^{\lambda}+e_{\widetilde{\lambda}_{r}+t}},
\end{align*}
\begin{align*}
B=&Q_{1}^{-1}(q^{-1}-q)\sum\limits_{l=1}^{\lambda_{r+1}-1}M_{\widetilde{e}_{r}(\lambda)}T_{\widetilde{\lambda}_{r}+1}\cdots T_{\widetilde{\lambda}_{r}+l-1}T_{\widetilde{\lambda}_{r}+l}T_{\widetilde{\lambda}_{r}+l-1}\cdots T_{1}T_{0}X_{1}^{-1}T_{1}^{-1}\cdots T_{\widetilde{\lambda}_{r}+t-1}^{-1}\\
=&Q_{1}^{-1}(q^{-1}-q)\sum\limits_{l=1}^{\lambda_{r+1}-1}(-1)^{l}M_{f^{\lambda}-e_{\widetilde{\lambda}_{r}+l+1}}T_{\widetilde{\lambda}_{r}+l-1}\cdots T_{1}T_{0}X_{1}^{-1}T_{1}^{-1}\cdots T_{\widetilde{\lambda}_{r}+t-1}^{-1}\\
=&Q_{1}^{-1}(q^{-1}-q)\sum\limits_{l=1}^{\lambda_{r+1}-1}(-1)^{l}(-q)^{l-1}(-1)^{\lambda_{m+1}+\cdots+\lambda_{r}}\\
&\times M_{(r+1,0^{\lambda_{0}},\ldots,r^{\lambda_{r}},(r+1)^{l-1},r,(r+1)^{\lambda_{r+1}-l-1})}T_{0}X_{1}^{-1}T_{1}^{-1}\cdots T_{\widetilde{\lambda}_{r}+t-1}^{-1}\\
=&Q_{1}^{-1}(q^{-1}-q)\sum\limits_{l=1}^{\lambda_{r+1}-1}(-1)^{l}(-q)^{l-1}(-1)^{\lambda_{m+1}+\cdots+\lambda_{r}}(-1)\\
&\times M_{(r+1,0^{\lambda_{0}},\ldots,r^{\lambda_{r}},(r+1)^{l-1},r,(r+1)^{\lambda_{r+1}-l-1})}T_{1}^{-1}\cdots T_{\widetilde{\lambda}_{r}+t-1}^{-1}\\
=&Q_{1}^{-1}(q^{-1}-q)\bigg\{\sum\limits_{l=1}^{t-1}(-1)^{l}(-q)^{l-1}(-1)(-q)^{-(t-2)}(-1)M_{(0^{\lambda_{0}},\ldots,r^{\lambda_{r}},(r+1)^{l-1},r,(r+1)^{\lambda_{r+1}-l})}\\
&+\sum\limits_{l=t}^{\lambda_{r+1}-1}(-1)^{l}(-q)^{l-1}(-1)(-q)^{-(t-1)}M_{(0^{\lambda_{0}},\ldots,r^{\lambda_{r}},(r+1)^{l},r,(r+1)^{\lambda_{r+1}-l-1})}\bigg\},
\end{align*}
\begin{align*}
C=(Q_{1}^{-1}-Q_{0}^{-1})M_{\widetilde{e}_{r}(\lambda)}T_{\widetilde{\lambda}_{r}+1}T_{\widetilde{\lambda}_{r}+2}\cdots T_{\widetilde{\lambda}_{r}+t-1}=(Q_{1}^{-1}-Q_{0}^{-1})(-1)^{t-1}M_{f^{\lambda}-e_{\widetilde{\lambda}_{r}+t}},
\end{align*}
\begin{align*}
D=&(Q_{1}^{-1}-Q_{0}^{-1})(q^{-1}-q)\sum\limits_{l=t}^{\lambda_{r+1}-1}M_{\widetilde{e}_{r}(\lambda)}T_{\widetilde{\lambda}_{r}+1}\cdots T_{\widetilde{\lambda}_{r}+l-1}T_{\widetilde{\lambda}_{r}+l}T_{\widetilde{\lambda}_{r}+l-1}\cdots T_{\widetilde{\lambda}_{r}+t}\\
=&(Q_{1}^{-1}-Q_{0}^{-1})(q^{-1}-q)\sum\limits_{l=t}^{\lambda_{r+1}-1}(-1)^{l}M_{f^{\lambda}-e_{\widetilde{\lambda}_{r}+l+1}}T_{\widetilde{\lambda}_{r}+l-1}\cdots T_{\widetilde{\lambda}_{r}+t}\\
=&(Q_{1}^{-1}-Q_{0}^{-1})(q^{-1}-q)\sum\limits_{l=t}^{\lambda_{r+1}-1}(-1)^{l}(-q)^{l-t}M_{(0^{\lambda_{0}},\ldots,r^{\lambda_{r}},(r+1)^{l},r,(r+1)^{\lambda_{r+1}-l-1})}.
\end{align*}
Thus, we have
\begin{align*}
\phi_{\widetilde{e}_{r}(\lambda), \lambda}^{e}(M_{f^{\lambda}})=&\sum\limits_{t=1}^{\lambda_{r+1}}q^{t-1}M_{f^{\lambda}-e_{\widetilde{\lambda}_{r}+t}}
+\sum\limits_{t=1}^{\lambda_{r+1}}q^{\lambda_{r}+2\lambda_{r+1}-t-1}M_{f^{\lambda}+e_{\widetilde{\lambda}_{r}+t}}\\
+\sum\limits_{t=1}^{\lambda_{r+1}}&q^{2\lambda_{r+1}-t-1}(q-q^{-1})\bigg\{\sum\limits_{l=1}^{t-1}q^{l-t+1}M_{(0^{\lambda_{0}},\ldots,r^{\lambda_{r}},(r+1)^{l-1},r,(r+1)^{\lambda_{r+1}-l})}\\
+\sum\limits_{l=t}^{\lambda_{r+1}-1}q^{l-t}&M_{(0^{\lambda_{0}},\ldots,r^{\lambda_{r}},(r+1)^{l},r,(r+1)^{\lambda_{r+1}-l-1})}\bigg\}
+\sum\limits_{t=1}^{\lambda_{r+1}}q^{2\lambda_{r+1}-t-1}(Q_{0}^{-1}Q_{1}-1)M_{f^{\lambda}-e_{\widetilde{\lambda}_{r}+t}}\\
+\sum\limits_{t=1}^{\lambda_{r+1}}q^{2\lambda_{r+1}-t-1}&(1-Q_{0}^{-1}Q_{1})(q^{-1}-q)\sum\limits_{l=t}^{\lambda_{r+1}-1}q^{l-t}M_{(0^{\lambda_{0}},\ldots,r^{\lambda_{r}},(r+1)^{l},r,(r+1)^{\lambda_{r+1}-l-1})}.
\end{align*}
In the above expression, the coefficient of $M_{f^{\lambda}+e_{\widetilde{\lambda}_{r}+t}}$ is $q^{\lambda_{r}+2\lambda_{r+1}-t-1}$, while the coefficient of $M_{f^{\lambda}-e_{\widetilde{\lambda}_{r}+t}}$ is
\begin{align*}
q^{t-1}+&\sum\limits_{m=t+1}^{\lambda_{r+1}}q^{2\lambda_{r+1}+t-2m}(q-q^{-1})+\sum\limits_{m=1}^{t-1}q^{2\lambda_{r+1}+t-2m-2}(q-q^{-1})\\
&+q^{2\lambda_{r+1}-t-1}(Q_{0}^{-1}Q_{1}-1)+\sum\limits_{m=1}^{t-1}q^{2\lambda_{r+1}+t-2m-2}(1-Q_{0}^{-1}Q_{1})(q^{-1}-q)\\
=&q^{2\lambda_{r+1}+t-3}Q_{0}^{-1}Q_{1}.
\end{align*}
Recall $\varsigma_{r+1} =q^{-1}Q_0^{-1}Q_1$. Hence we have
\begin{align*}
\Psi(B_{r+1})=\sum\limits_{\lambda\in \Lambda_{m| n;d}}q^{-\lambda_{r}-\lambda_{r+1}+1}\phi_{\widetilde{e}_{r}(\lambda), \lambda}^{e}.
\end{align*}
\end{proof}

\begin{lem}\label{phismn}
For each $\lambda\in \Lambda_{m| n;d}$, the element $\phi_{\lambda, \lambda}^{e}\in \mathbb{S}_{m| n;d}^{\jmath}$ lies in the subalgebra of $\mathbb{S}_{m| n;d}^{\jmath}$ generated by $\Psi(\mathrm{k}_{j}^{\pm 1})$ for $0\leq j\leq m+n$.
\end{lem}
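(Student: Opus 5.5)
The plan is to show that the elements $\Psi(\mathrm{k}_j^{\pm1})$ act diagonally on the generators $M_{f^\mu}$ with characters that separate all $\mu\in\Lambda_{m|n;d}$. Then the idempotent $\phi^e_{\lambda,\lambda}$ is a polynomial in them, by Lagrange interpolation.

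\emph{Step 1: identify $\Psi(\mathrm{k}_j)$.} By \eqref{hamf}, $\mathrm{h}_a$ acts on $M_f$ by the scalar $q^{d_a(\gamma_a(f)+\gamma_{-a}(f))}$, and this depends only on the $W$-orbit of $f$. Indeed, the action of $s_0$ and of $s_d$, namely $f_d\mapsto N-f_d$, preserves the multiset of residues up to $a\leftrightarrow -a$. So on $\mathbb{V}^{\otimes d}$ the operator $\mathrm{h}_a$ preserves each summand $M_{f^\mu}\cdot\mathbb{H}\cong x_{\mu^{(0)}}y_{\mu^{(1)}}\mathbb{H}$ (Lemma~\ref{votthma}, Lemma~\ref{mathtmnd}). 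Under the identification of $\mathbb{S}^{\jmath}_{m|n;d}$ with $\mathrm{End}_{\mathbb{H}}(\mathbb{V}^{\otimes d})$ it therefore becomes
\[
\Psi(\mathrm{h}_a)=\sum_{\mu}q^{c_a\mu_a}\phi^e_{\mu,\mu},
\]
with $c_a=2d_a$ for $a=0,m+n+1$ and $c_a=d_a$ otherwise. Since $\mathrm{k}_j=\mathrm{h}_j\mathrm{h}_{j+1}^{-1}$, this gives
\[
\Psi(\mathrm{k}_j)=\sum_\mu q^{c_j\mu_j-c_{j+1}\mu_{j+1}}\phi^e_{\mu,\mu}.
\]
Here I use that the $\phi^e_{\mu,\mu}$ are orthogonal idempotents summing to $1$.

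\emph{Step 2: separation of weights.} Suppose $\mu\neq\nu$ in $\Lambda_{m|n;d}$ have the same eigenvalues for all $\mathrm{k}_j$, $0\le j\le m+n$. Since $q$ is transcendental, this gives
\[
c_j(\mu_j-\nu_j)=c_{j+1}(\mu_{j+1}-\nu_{j+1})\quad\text{for all } j.
\]
Each $c_j$ is nonzero, so $\mu-\nu$ is determined by $\mu_0-\nu_0$ through a chain of nonzero multiples. The constraint $\sum_j(\mu_j-\nu_j)=0$, with $|c_j|\in\{1,2\}$ and the sign pattern of $d_j$, should then force $\mu_0-\nu_0=0$. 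This is the main obstacle: since $d_j$ changes sign between the even and odd blocks, the linear combination $\sum_j \pm c_0/c_j$ might vanish. The ratios are $\pm1$ except at the ends, where the factor is $\tfrac12$, and the total is roughly $\pm1\cdot\tfrac12(m)+\dots$. Here the hypothesis $m\neq n$ is exactly what I expect to guarantee that it is nonzero. So I would compute $\sum_j c_0/c_j$ explicitly, get something like a multiple of $(m-n)$, and conclude.

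\emph{Step 3: interpolation.} Let $\chi_\mu(j)=q^{c_j\mu_j-c_{j+1}\mu_{j+1}}$. With characters separating weights, for each $\nu\neq\lambda$ choose $j_\nu$ with $\chi_\lambda(j_\nu)\ne\chi_\nu(j_\nu)$. Then
\[
\prod_{\nu\neq\lambda}\frac{\Psi(\mathrm{k}_{j_\nu})-\chi_\nu(j_\nu)}{\chi_\lambda(j_\nu)-\chi_\nu(j_\nu)}=\phi^e_{\lambda,\lambda}.
\]
This is a finite product, since $\Lambda_{m|n;d}$ is finite. It lies in the subalgebra generated by the $\Psi(\mathrm{k}_j^{\pm1})$, which proves the statement.
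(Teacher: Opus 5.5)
Your proposal is correct and is essentially the paper's proof. The paper also computes the exponents $c_j\mu_j-c_{j+1}\mu_{j+1}$ (its $b_j(\mu)$) by which $\Psi(\mathrm{k}_j)$ acts on $M_{f^\mu}$, asserts that they separate weights because $m\neq n$, and forms the same Lagrange product, identified with $\phi^e_{\lambda,\lambda}$ via $\mathbb{H}$-linearity. The step you left hedged closes in one line: with $x_j:=\mu_j-\nu_j$, the quantity $c_jx_j$ equals a constant $C$, so $0=\sum_j x_j=C\sum_j c_j^{-1}=C(\tfrac12+m-n-\tfrac12)=C(m-n)$, which forces $C=0$.
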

\begin{proof}
For $0\leq j\leq m+n$, we have $\Psi(\mathrm{k}_{j})M_{f^{\lambda}}= \Psi(\mathrm{h}_{j}\mathrm{h}_{j+1}^{-1})M_{f^{\lambda}}$. By \eqref{hamf}, we have
$\Psi(\mathrm{k}_{j})M_{f^{\lambda}}=q^{b_{j}(\lambda)}M_{f^{\lambda}}$, where
\begin{align*}
b_{j}(\lambda)=\left\{
\begin{array}{ll}
2\lambda_{0}-\lambda_{1}&\hspace{0.35cm}\text{if $j=0$},\\[0.3em]
\lambda_{j}-\lambda_{j+1}&\hspace{0.35cm}\text{if $1\leq j< m$},\\[0.3em]
\lambda_{m}+\lambda_{m+1}&\hspace{0.35cm}\text{if $j=m$},\\[0.3em]
-\lambda_{j}+\lambda_{j+1}&\hspace{0.35cm}\text{if $m+1\leq j< m+n$},\\[0.3em]
-\lambda_{m+n}+2\lambda_{m+n+1}&\hspace{0.35cm}\text{if $j=m+n$}.
\end{array}
\right.
\end{align*}
Recall that $m\neq n$. Then it is easy to verify that $(b_{0}(\lambda),\ldots,b_{m+n}(\lambda))=(b_{0}(\mu),\ldots,b_{m+n}(\mu))$ if and only if $\lambda=\mu$.

Now fix $\lambda$. For each $\mu\neq \lambda$, we may choose $a(\mu)$ such that $b_{a(\mu)}(\lambda)\neq b_{a(\mu)}(\mu)$. We define the following element of the subalgebra:
\begin{align*}
P_{\lambda}=\prod_{\mu\in \Lambda_{m| n;d}\atop \mu\neq\lambda}\frac{\Psi(\mathrm{k}_{a(\mu)})-q^{b_{a(\mu)}(\mu)}}{q^{b_{a(\mu)}(\lambda)}-q^{b_{a(\mu)}(\mu)}}.
\end{align*}
Then $P_{\lambda}$ is well-defined and satisfies
\begin{align*}
P_{\lambda}M_{f^{\nu}}=\left\{
\begin{array}{ll}
M_{f^{\lambda}}&\hspace{0.35cm}\text{if $\nu=\lambda$},\\[0.3em]
\{0\}&\hspace{0.35cm}\text{if $\nu\neq \lambda$}.
\end{array}
\right.
\end{align*}
Since $P_{\lambda}\in \mathbb{S}_{m| n;d}^{\jmath}$, it commutes with the action of each $h\in \mathbb{H}$ on $\mathbb{V}^{\otimes d}$. That is,
\begin{align*}
P_{\lambda}(M_{f^{\nu}}\cdot h)=(P_{\lambda}M_{f^{\nu}})\cdot h=\delta_{\lambda, \nu}M_{f^{\lambda}}\cdot h.
\end{align*}
But this is precisely the action of $\phi_{\lambda, \lambda}^{e}$. Hence, we have $\phi_{\lambda, \lambda}^{e}=P_{\lambda}$.
\end{proof}

\begin{prop}\label{surjve}
Assume that $m+n\geq d$. The affine $\jmath$-Schur superalgebra $\mathbb{S}_{m| n;d}^{\jmath}$ is generated by $\Psi(B_{i})$ for $i\in \hat{I}$ and $\Psi(\mathrm{k}_{j}^{\pm 1})$ for $0\leq j\leq m+n$.
\end{prop}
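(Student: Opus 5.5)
Following the strategy of \cite{FLLLWW20} and \cite{FLLLW23}, let $\mathcal{S}$ denote the subalgebra of $\mathbb{S}_{m| n;d}^{\jmath}$ generated by $\Psi(B_{i})$, $i\in \hat{I}$, and $\Psi(\mathrm{k}_{j}^{\pm 1})$, $0\leq j\leq m+n$. The goal is to show that every basis element $\phi_{\lambda,\mu}^{g}$, $g\in\mathcal{D}_{\lambda\mu}^{\circ}$, lies in $\mathcal{S}$.

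\textbf{Step 1: idempotents and divided powers.} By Lemma \ref{phismn}, every idempotent $\phi_{\lambda,\lambda}^{e}$ lies in $\mathcal{S}$. Sandwiching the formulas of Lemma \ref{bibmn} between these idempotents, the single-step maps $\phi_{\widetilde{f}_{i}(\lambda),\lambda}^{e}$ ($0\leq i\leq m+n$) lie in $\mathcal{S}$, since $\phi^e_{\widetilde f_i(\lambda),\widetilde f_i(\lambda)}\Psi(B_i)\phi^e_{\lambda,\lambda}$ is a nonzero scalar multiple of $\phi_{\widetilde{f}_{i}(\lambda),\lambda}^{e}$. The same argument with $B_{\tau i}$ for $m+n+1\leq i\leq 2m+2n+1$ gives the maps $\phi_{\widetilde{e}_{i}(\lambda),\lambda}^{e}$ ($0\leq i\leq m+n$), using that $\varsigma_i\neq 0$. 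Composing these, I obtain the analogues of divided powers. For an even $\alpha_i$, the composite $\phi^e_{\widetilde f_i^{k}(\lambda),\widetilde f_i^{k-1}(\lambda)}\cdots\phi^e_{\widetilde f_i(\lambda),\lambda}$ equals $[k]^!\,\phi^e_{\widetilde f_i^k(\lambda),\lambda}$ up to a power of $q$, so it can be divided. For the odd simple root $\alpha_m$, only single steps appear, and these suffice because the corresponding component is supersymmetric. The case $i=0,m+n+1$ needs more care, because the boundary terms in $\phi^e$ involve $T_0$ and $T_d$; for these I will use the formulas of Lemma \ref{bibmn} directly.

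\textbf{Step 2: triangularity for a general basis element.} For each $(\lambda,g,\mu)$, encode the double coset $W_\lambda g W_\mu$ by a matrix $A$ (the affine type $C$ analogue of \cite{FLLLW23}, with the super constraint $g\in\mathcal{D}^\circ_{\lambda\mu}$, so that odd rows and columns carry entries $\leq 1$ in the appropriate blocks). Then construct an ordered product of the Chevalley-type elements $\phi^e_{\widetilde f_i^{(a)}},\phi^e_{\widetilde e_i^{(a)}}$ from Step 1, of the form ``upper then lower'' according to the entries of $A$, as in \cite[\S4]{FLLLWW20}. The key claim is a multiplication formula
\[
\phi_{\lambda,\nu}^{e}\text{-type product}=c\,\phi_{\lambda,\mu}^{g}+\sum_{g'<g}c_{g'}\phi_{\lambda,\mu}^{g'},\qquad c\neq 0,
\]
with respect to the Bruhat order (equivalently, a partial order on matrices refining the degenerate order). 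This claim will be derived by evaluating both sides on $M_{f^\mu}$ via Lemma \ref{mathtmnd} and comparing leading terms $M_{f^\lambda g}$, using the triangularity $M_{f^\lambda}T_g=c_{\lambda,g}M_{f^\lambda g}+\text{lower}$ from Lemma \ref{votthma}. Induction on this order then shows that every $\phi^g_{\lambda,\mu}$ lies in $\mathcal{S}$.

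\textbf{Main obstacle.} The hardest point is the leading-term computation in Step 2 in the super setting: one must verify that the Koszul signs and the $Q_0,Q_1$ factors coming from $T_0,T_d$ never make the leading coefficient $c$ vanish. Here the hypothesis $m+n\geq d$ is essential, since it guarantees that every $g$ can be reached through weights avoiding multiplicities, so that odd parts have multiplicity at most one where needed. I would first try to reduce to the case where $A$ has a single off-diagonal block entry, handled by Lemma \ref{bibmn}, and then use a parity-by-parity induction on the number of off-diagonal entries.
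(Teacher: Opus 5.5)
Your Step 1 is fine and matches the paper's first step: Lemmas \ref{bibmn} and \ref{phismn} put every $\phi^{e}_{\widetilde{f}_{i}(\lambda),\lambda}$ and $\phi^{e}_{\widetilde{e}_{i}(\lambda),\lambda}$, $0\leq i\leq m+n$, into $\mathcal{S}$.

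The gap is Step 2. Its whole content is the ``key claim'': a suitable ordered product of single-step (or divided) maps equals $c\,\phi^{g}_{\lambda,\mu}$ plus Bruhat-lower terms, with $c\neq 0$. You do not prove this. You do not specify the matrix parametrization of $W_{\lambda}\backslash W/W_{\mu}$ compatible with $g\in\mathcal{D}^{\circ}_{\lambda\mu}$, nor the ordered product attached to a matrix. Nor do you show why the leading coefficient survives the Koszul signs and the $Q_{0},Q_{1}$ factors from $T_{0}$ and $T_{d}$; you list this yourself as the unresolved main obstacle.

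This is not a routine check. Products of the maps $\phi^{e}$ generally spread over several double cosets, so isolating a leading one needs genuine multiplication formulas. In the non-super affine cases this kind of triangularity is a substantial result (the aperiodicity and monomial-basis theory), and nothing in the paper gives it in the super setting. In particular, the triangularity $M_{f^{\lambda}}\cdot T_{g}=c_{\lambda,g}M_{f^{\lambda}g}+\cdots$ from Lemma \ref{votthma} concerns a single $T_{g}$, not products of the $\phi$'s. As it stands, the proposal does not prove the proposition.

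The idea you are missing makes all of this unnecessary. Use $m+n\geq d$ to choose the regular weight $\omega=(0,1^{d},0^{m+n-d},0)$, which has $W_{\omega}=\{e\}$. Then $x_{\omega^{(0)}}y_{\omega^{(1)}}\mathbb{H}=\mathbb{H}$. So the corner $\phi^{e}_{\omega,\omega}\mathbb{S}^{\jmath}_{m|n;d}\phi^{e}_{\omega,\omega}\cong\mathrm{End}_{\mathbb{H}}(\mathbb{H}_{\mathbb{H}})$ is a copy of $\mathbb{H}$, generated by the $\phi^{s_{i}}_{\omega,\omega}$, $0\leq i\leq d$ (Proposition \ref{another}). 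The paper then argues in three steps.

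(a) Every $\lambda$ is reached from $\omega$ by single steps along which the parabolic subgroups increase. Along such a chain the composite of the $\phi^{e}$'s is exactly $\phi^{e}_{\lambda,\omega}$ (resp.\ $\phi^{e}_{\omega,\lambda}$), with no lower terms.

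(b) For $0\leq i\leq\min\{m,d-1\}$ one has $\phi^{e}_{\omega,\widetilde{e}_{i}(\omega)}\phi^{e}_{\widetilde{e}_{i}(\omega),\omega}=\phi^{e}_{\omega,\omega}+q_{s_{i}}^{-1}\phi^{s_{i}}_{\omega,\omega}$. For $m+1\leq i\leq d-1$ the same holds with $-q_{s_{i}}$ in place of $q_{s_{i}}^{-1}$, and $s_{d}$ is handled by the analogous identity through $\widetilde{f}_{r}\cdots\widetilde{f}_{d}(\omega)$. This puts the generators of the corner into $\mathcal{S}$.

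(c) $\phi^{e}_{\lambda,\omega}\phi^{e}_{\omega,\lambda}$ is a nonzero scalar multiple of $\phi^{e}_{\lambda,\lambda}$. Hence $\mathbb{S}^{\jmath}_{m|n;d}=\sum_{\lambda,\mu}\phi^{e}_{\lambda,\omega}\bigl(\phi^{e}_{\omega,\omega}\mathbb{S}^{\jmath}_{m|n;d}\phi^{e}_{\omega,\omega}\bigr)\phi^{e}_{\omega,\mu}\subseteq\mathcal{S}$.

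No divided powers, matrix combinatorics or leading-coefficient computations are needed. The hypothesis $m+n\geq d$ enters only through the existence of $\omega$.
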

\begin{proof}
Let $S$ denote the subalgebra of $\mathbb{S}_{m| n;d}^{\jmath}$ generated by $\Psi(B_{i})$ and $\Psi(\mathrm{k}_{j}^{\pm 1})$. By Lemmas \ref{bibmn} and \ref{phismn}, for each $\lambda\in \Lambda_{m| n;d}$ and $0\leq i\leq m+n$, we see that $\phi_{\widetilde{e}_{i}(\lambda), \lambda}^{e}\in S$ and $\phi_{\widetilde{f}_{i}(\lambda), \lambda}^{e}\in S$.

Since $m+n\geq d$, we can choose
\begin{align*}
\omega=\{0,\underbrace{1,\ldots,1}_{d},\underbrace{0,\ldots,0}_{m+n-d},0\}\in \Lambda_{m| n;d}.
\end{align*}
For any $\lambda\in \Lambda_{m| n;d}$, it is easy to verify that there exists a sequence $(x_{1},\ldots, x_{l})$ of $\widetilde{e}_{i}, \widetilde{f}_{i}$'s such that $\lambda=x_{l}\cdots x_{1}(\omega)$ and $\{e\}=W_{\lambda_{0}}\subseteq W_{\lambda_{1}}\subseteq \cdots \subseteq W_{\lambda_{l}}=W_{\lambda}$, where $\lambda_{0}=\omega$ and $\lambda_{i}=x_{i}(\lambda_{i-1})$. Thus, we have
\begin{align*}
\phi_{\lambda, \omega}^{e}=\phi_{\lambda_{l}, \lambda_{l-1}}^{e}\cdots \phi_{\lambda_{1}, \lambda_{0}}^{e}\in S.
\end{align*}
Similarly, we obtain $\phi_{\omega, \lambda}^{e}\in S$.

By a direct calculation, we have
\begin{align*}
\phi_{\omega, \widetilde{e}_{i}(\omega)}^{e}\cdot \phi_{\widetilde{e}_{i}(\omega), \omega}^{e}=\phi_{\omega, \omega}^{e}+q_{s_{i}}^{-1}\phi_{\omega, \omega}^{s_{i}}\qquad \text{for $0\leq i\leq \mathrm{min}\{m, d-1\}$},
\end{align*}
\begin{align*}
\phi_{\omega, \widetilde{e}_{i}(\omega)}^{e}\cdot \phi_{\widetilde{e}_{i}(\omega), \omega}^{e}=\phi_{\omega, \omega}^{e}+(-q_{s_{i}})\phi_{\omega, \omega}^{s_{i}}\qquad \text{for $d> m+1$ and $m+1\leq i\leq d-1$},
\end{align*}
\begin{align*}
\phi_{\omega, \widetilde{f}_{r}(\omega)\cdots\widetilde{f}_{d}(\omega)}^{e}\cdot \phi_{\widetilde{f}_{r}(\omega)\cdots\widetilde{f}_{d}(\omega), \omega}^{e}=\phi_{\omega, \omega}^{e}+(-q_{s_{d}})\phi_{\omega, \omega}^{s_{d}}.
\end{align*}
Note that $\phi_{\widetilde{f}_{r}(\omega)\cdots\widetilde{f}_{d}(\omega), \omega}^{e}=\phi_{\widetilde{f}_{r}(\omega)\cdots\widetilde{f}_{d}(\omega), \widetilde{f}_{r-1}(\omega)\cdots\widetilde{f}_{d}(\omega)}^{e}\cdots \phi_{\widetilde{f}_{d}(\omega), \omega}^{e}\in S$. Similarly, we have $\phi_{\omega, \widetilde{f}_{r}(\omega)\cdots\widetilde{f}_{d}(\omega)}^{e}\in S$. Summarizing the above, we have shown that $\phi_{\omega, \omega}^{s_{i}}\in S$ for $0\leq i\leq d$. Since $\phi_{\omega, \omega}^{e}\mathbb{S}_{m| n;d}^{\jmath}\phi_{\omega, \omega}^{e}$ is generated by $\phi_{\omega, \omega}^{s_{i}}$ for $0\leq i\leq d$, we have $\phi_{\omega, \omega}^{e}\mathbb{S}_{m| n;d}^{\jmath}\phi_{\omega, \omega}^{e}\subseteq S$.

Finally, since
\begin{align*}
\mathbb{S}_{m| n;d}^{\jmath}=\bigoplus_{\lambda, \mu}\phi_{\lambda, \omega}^{e}\mathbb{S}_{m| n;d}^{\jmath}\phi_{\omega, \mu}^{e},
\end{align*}
and for any $\lambda, \mu\in \Lambda_{m| n;d}$, we have
\begin{align*}
\phi_{\lambda, \omega}^{e}\mathbb{S}_{m| n;d}^{\jmath}\phi_{\omega, \mu}^{e}=\phi_{\lambda, \omega}^{e}\phi_{\omega, \omega}^{e}\mathbb{S}_{m| n;d}^{\jmath}\phi_{\omega, \omega}^{e}\phi_{\omega, \mu}^{e}\subseteq S,
\end{align*}
we see that $S=\mathbb{S}_{m| n;d}^{\jmath}$.
\end{proof}

\begin{thm}\label{schruimo}
Suppose that $m+n\geq d\geq 2$. We have the following isomorphisms:
\begin{align*}
\Psi(\widehat{U}^{\jmath}(\mathfrak{sl}))\cong \mathrm{End}_{\mathbb{H}}(\mathbb{V}^{\otimes d}),
\end{align*}
\begin{align*}
\mathrm{End}_{\widehat{U}^{\jmath}(\mathfrak{sl})}(\mathbb{V}^{\otimes d})\cong \mathbb{H}^{op}.
\end{align*}
\end{thm}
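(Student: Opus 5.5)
The first isomorphism will follow directly from what has been established. By Lemma \ref{mathtmnd} the map $\kappa$ gives a superalgebra isomorphism $\mathbb{S}_{m| n;d}^{\jmath}\cong \mathrm{End}_{\mathbb{H}}(\mathbb{V}^{\otimes d})$. Under this identification, Theorem \ref{hatcuth} shows that the action of $\widehat{U}^{\jmath}(\mathfrak{sl})$ factors through $\Psi:\widehat{U}^{\jmath}(\mathfrak{sl})\to \mathbb{S}_{m| n;d}^{\jmath}$. Proposition \ref{surjve} shows that $\Psi$ is surjective when $m+n\geq d$, since $\Psi(B_i)$ and $\Psi(\mathrm{k}_j^{\pm1})$ generate $\mathbb{S}_{m| n;d}^{\jmath}$. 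Hence $\Psi(\widehat{U}^{\jmath}(\mathfrak{sl}))=\mathbb{S}_{m| n;d}^{\jmath}\cong \mathrm{End}_{\mathbb{H}}(\mathbb{V}^{\otimes d})$.

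For the second isomorphism, we first use surjectivity to obtain
$$\mathrm{End}_{\widehat{U}^{\jmath}(\mathfrak{sl})}(\mathbb{V}^{\otimes d})=\mathrm{End}_{\mathbb{S}_{m| n;d}^{\jmath}}(\mathbb{V}^{\otimes d}).$$
Via $\kappa$, this is the same as $\mathrm{End}_{\mathrm{End}_{\mathbb{H}}(\mathbb{T})}(\mathbb{T})$, where $\mathbb{T}=\mathbb{T}_{m| n;d}=\bigoplus_\lambda x_{\lambda^{(0)}}y_{\lambda^{(1)}}\mathbb{H}$. The key observation is that the weight $\omega=(0,1^d,0^{m+n-d},0)$ has $W_\omega=\{e\}$, so $x_{\omega^{(0)}}y_{\omega^{(1)}}=1$. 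Thus $\mathbb{H}$ itself occurs as the direct summand $\mathbb{T}_\omega$ of $\mathbb{T}$, with idempotent $e_\omega=\phi^{e}_{\omega,\omega}$. This is exactly where $m+n\ge d$ is used. Since $\mathbb{H}_{\mathbb{H}}$ is a summand of $\mathbb{T}$, $\mathbb{T}$ is a generator of the category of right $\mathbb{H}$-modules. The double centralizer property for generators (a standard Morita-type argument) then gives $\mathrm{End}_{\mathrm{End}_{\mathbb{H}}(\mathbb{T})}(\mathbb{T})=\mathbb{H}$, acting by right multiplication. Writing this as left endomorphisms gives $\mathbb{H}^{op}$.

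I will make this argument explicit rather than cite it. Let $\theta$ commute with $\mathbb{S}=\mathbb{S}_{m| n;d}^{\jmath}$. Then $\theta$ preserves each $\mathbb{T}_\lambda=e_\lambda\mathbb{T}$, since $\theta$ commutes with the idempotents $e_\lambda=\phi^e_{\lambda,\lambda}$. Set $h:=\theta(1)\in \mathbb{T}_\omega=\mathbb{H}$. Left multiplication by any $a\in\mathbb{H}$ defines an element of $e_\omega\mathbb{S}e_\omega=\mathrm{End}_{\mathbb{H}}(\mathbb{H})$, so $\theta(a)=\theta(a\cdot 1)=ah$. Hence $\theta$ agrees with right multiplication by $h$ on $\mathbb{T}_\omega$. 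For a general $\lambda$ and $u\in\mathbb{T}_\lambda$, the map $\phi_u:\mathbb{T}_\omega\to\mathbb{T}_\lambda$, $a\mapsto ua$, lies in $\mathbb{S}$. Therefore $\theta(u)=\theta(\phi_u(1))=\phi_u(h)=uh$. So $\theta$ is right multiplication by $h$.

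It remains to check that $h\mapsto (\cdot h)$ is injective on $\mathbb{V}^{\otimes d}$. This holds because $\mathbb{H}$ acts faithfully on the free summand $\mathbb{T}_\omega\cong \mathbb{H}_{\mathbb{H}}$. Concretely, $M_{f^\omega}=v_1\otimes\cdots\otimes v_d$ has distinct entries, and Lemma \ref{votthma} identifies $M_{f^\omega}\cdot\mathbb{H}$ with the regular module. Combining these facts, the natural map $\mathbb{H}^{op}\to \mathrm{End}_{\widehat{U}^{\jmath}(\mathfrak{sl})}(\mathbb{V}^{\otimes d})$ is an isomorphism.

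The super grading causes no difficulty here: $\kappa$ is grading-preserving, and the centralizers are taken as ungraded algebras of all endomorphisms. The only delicate point is confirming that $\phi_u$ genuinely lies in $\mathbb{S}$, i.e.\ is $\mathbb{H}$-linear, which is immediate. I expect no serious obstacle beyond correctly invoking Proposition \ref{surjve} and the existence of the regular summand $\omega$.
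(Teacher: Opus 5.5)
Your proposal is correct and follows essentially the same route as the paper. The first isomorphism comes from the surjectivity in Proposition~\ref{surjve} together with Lemma~\ref{mathtmnd}, and the second from the regular summand $x_{\omega^{(0)}}y_{\omega^{(1)}}\mathbb{H}=\mathbb{H}$, which exists because $W_\omega=\{e\}$. Your explicit check that any $\theta$ commuting with $\mathbb{S}_{m|n;d}^{\jmath}$ is right multiplication by $\theta(1)$ is the unpacked form of the paper's chain $\mathrm{End}_{\mathbb{S}_{m|n;d}^{\jmath}}(\mathbb{T}_{m|n;d})\cong\mathrm{End}_{\mathbb{S}_{m|n;d}^{\jmath}}(\mathbb{S}_{m|n;d}^{\jmath}\phi^{e}_{\omega,\omega})\cong(\phi^{e}_{\omega,\omega}\mathbb{S}_{m|n;d}^{\jmath}\phi^{e}_{\omega,\omega})^{op}\cong\mathbb{H}^{op}$.
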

\begin{proof}
By Prop. \ref{surjve} we have $\Psi(\widehat{U}^{\jmath}(\mathfrak{sl}))=\mathbb{S}_{m| n;d}^{\jmath}$. Hence the first isomorphism follows from Lemma \ref{mathtmnd}.

Note that $W_{\omega}=\{e\}$, and consequently, $x_{\omega^{(0)}}y_{\omega^{(1)}}\mathbb{H}=\mathbb{H}$. Thus, $\mathbb{T}_{m| n;d}$ contains the right regular $\mathbb{H}$-module $\mathbb{H}_{\mathbb{H}}$ as a direct summand. Hence, we have $\mathbb{S}_{m| n;d}^{\jmath}\phi_{\omega, \omega}^{e}\cong \mathbb{T}_{m| n;d}$, and
\begin{align*}
\mathrm{End}_{\mathbb{S}_{m| n;d}^{\jmath}}(\mathbb{T}_{m| n;d})\cong \mathrm{End}_{\mathbb{S}_{m| n;d}^{\jmath}}(\mathbb{S}_{m| n;d}^{\jmath}\phi_{\omega, \omega}^{e})\cong (\phi_{\omega, \omega}^{e}\mathbb{S}_{m| n;d}^{\jmath}\phi_{\omega, \omega}^{e})^{op}\cong (\mathrm{End}_{\mathbb{H}}(\mathbb{H}_{\mathbb{H}}))^{op}\cong \mathbb{H}^{op}.
\end{align*}
Since $\Psi(\widehat{U}^{\jmath}(\mathfrak{sl}))=\mathbb{S}_{m| n;d}^{\jmath}$, we have the isomorphism
\begin{align*}
\mathrm{End}_{\widehat{U}^{\jmath}(\mathfrak{sl})}(\mathbb{V}^{\otimes d})\cong \mathrm{End}_{\kappa\mathbb{S}_{m| n;d}^{\jmath}\kappa^{-1}}(\mathbb{V}^{\otimes d})\cong
\mathrm{End}_{\mathbb{S}_{m| n;d}^{\jmath}}(\mathbb{T}_{m| n;d}).
\end{align*}
Thus, we obtain the second isomorphism.
\end{proof}

\section{The $\imath$-quantum affine superalgebras}\label{def:quanafjsi}
In this section, we introduce a variant of $\jmath$-quantum affine superalgebras associated to odd-rank quantum affine superalgebras, and establish the second Schur--Weyl reciprocity. We only state the main results and omit their proofs, since they can be proved by arguments similar to those in \S\S\ref{def:quanaffis}-\ref{swrecio}.

\subsection{}\label{iquma}
Set $\mathfrak{n} :=N-1=2r+1$ for $r\in \mathbb{Z}_{\geq 2}$. We consider the general linear Lie superalgebra $\mathfrak{g}$, whose corresponding root system has rank $\mathfrak{n}-1$, and the associated quantum affine superalgebra $\widehat{U}_{\mathfrak{n}}$ with generators $E_i, F_i$ for $i\in \hat{I}_{\mathfrak{n}}$ and $q^{\pm \epsilon_{1}^\vee},\ldots, q^{\pm \epsilon_{\mathfrak{n}}^\vee}$, where $\hat{I}_{\mathfrak{n}}=\left\{ 0,1,\ldots, \mathfrak{n}-1 \right\}$.

Assume that $|\alpha_{0}|=\overline{0}$, and there exists a parity-preserving permutation $\tau$ of $\hat{I}_{\mathfrak{n}}$ such that $\tau(0)=0$, $\tau^{2}=\mathrm{id}$ and $\tau(i)=\mathfrak{n}-i$ for $1\leq i\leq r$. Given parameters $\kappa_{0}, \varsigma_{i}\in \F$, we set $B_{0} :=F_{0}+\varsigma_{0}E_{0}K_{0}^{-1}+\kappa_{0}K_{0}^{-1}$ and $B_{i} :=F_{i}+\varsigma_{i}E_{\tau i}K_{i}^{-1}$ for $i\in \hat{I}_{\mathfrak{n}}\setminus\{0\}$. We require that $\varsigma_{i}=\varsigma_{\tau i}$ for $1\leq i\leq r-1$ and $d_{i}=(-1)^{|\alpha_i|}d_{\tau i}$ for $1\leq i\leq r$.
\begin{figure}[htbp]
\centering
\begin{tikzpicture}[
    scale=1,
    vertex/.style={
        inner sep=0pt,
        outer sep=1pt
    },
    >=Stealth
]

\node[vertex,label=above:$0$]
    (a0) at (0,0) {\scalebox{1.8}{$\circ$}};

\node[vertex,label=above:$1$]
    (a1) at (1.3,0.7) {$\oslash$};

\node[vertex,label=below:$2r$]
    (a2r) at (1.3,-0.7) {$\oslash$};

\node[vertex,label=above:$r-1$]
    (am) at (4.8,0.7) {$\oslash$};

\node[vertex,label=below:$r+2$]
    (bm) at (4.8,-0.7) {$\oslash$};

\node[vertex,label=above:$r$]
    (ar) at (6.4,0.7) {$\oslash$};

\node[vertex,label=below:$r+1$]
    (br) at (6.4,-0.7) {$\oslash$};

\draw (a0) -- (a1);
\draw (a0) -- (a2r);

\draw (a1) -- (2.1,0.7);
\draw (a2r) -- (2.1,-0.7);

\node at (2.9,0.7) {$\cdots$};
\node at (2.9,-0.7) {$\cdots$};

\draw (3.7,0.7) -- (am);
\draw (3.7,-0.7) -- (bm);

\draw (am) -- (ar);
\draw (bm) -- (br);
\draw (ar) -- (br);

\draw[blue,<->,bend left=35]
    (1.55,0.48) to (1.55,-0.48);

\draw[blue,<->,bend right=35]
    (4.55,0.48) to (4.55,-0.48);

\draw[blue,<->,bend right=35]
    (6.15,0.48) to (6.15,-0.48);

\draw[blue,<->]
    (-0.20,0.13)
    .. controls (-1.05,0.38)
       and (-1.05,-0.38)
    .. (-0.20,-0.13);

\end{tikzpicture}
\end{figure}


Let $\widehat{U}(\mathfrak{sl}_{\mathfrak{n}})$ be the subalgebra of $\widehat{U}_{\mathfrak{n}}$ generated by $E_i$, $F_i$, $K_i^{\pm 1}$ for $i\in \hat{I}_{\mathfrak{n}}$, where $K_i=q^{d_{i}\epsilon_{i}^\vee-d_{i+1}\epsilon_{i+1}^\vee}$ for $1\leq i\leq \mathfrak{n}-1$ and $K_{0}=q^{d_{\mathfrak{n}}\epsilon_{\mathfrak{n}}^\vee-d_{1}\epsilon_{1}^\vee}$. Let $\widehat{U}^{\imath}(\mathfrak{sl}_{\mathfrak{n}})$ be the subalgebra of $\widehat{U}(\mathfrak{sl}_{\mathfrak{n}})$ generated by $B_{i}$ for $i\in \hat{I}_{\mathfrak{n}}$ and $\mathrm{k}_{j}^{\pm 1}$ for $1\leq j\leq r$, where $\mathrm{k}_{j}=K_{j}K_{\tau j}^{-1}$. Then we have the following theorem analogous to Theorem \ref{def:ipreseabslma}.

\begin{thm}
\label{def:ipreseabslmadada}
$\widehat{U}^{\imath}(\mathfrak{sl}_{\mathfrak{n}})$ is a right coideal subalgebra of $\widehat{U}(\mathfrak{sl}_{\mathfrak{n}})$. Moreover, it admits the following presentation: for $1\leq a, b\leq r$ with $\mathrm{k}_{\tau a}= \mathrm{k}_{a}^{-1}$ and $i\in \hat{I}_{\mathfrak{n}}$,

\begin{align}\label{KEF1relada}
    \mathrm{k}_{a}\mathrm{k}_{b}=\mathrm{k}_{b}\mathrm{k}_{a},\qquad
    \mathrm{k}_{a}\mathrm{k}_{a}^{-1}=1=\mathrm{k}_{a}^{-1}\mathrm{k}_{a},
\end{align}
	
\begin{align}\label{skunk2relaad}
    \mathrm{k}_{a}B_{i}=q^{-(\alpha_{a}, \alpha_i)+(\alpha_{\tau a}, \alpha_i)}B_{i}\mathrm{k}_{a},
\end{align}
\begin{align}\label{square33adad}
    \text{for }1\leq i\leq r-1, \qquad [B_{\tau i},B_i]=
\frac{\varsigma_{\tau i}\mathrm{k}_{i} - \varsigma_{i}\mathrm{k}_{i}^{-1}}{q_i-q_i^{-1}},
\end{align}

\begin{align}\label{square3333adad}
    \text{for $i\not\equiv j\pm 1$ (mod $\mathfrak{n}$) and $j\neq \tau i$}, \qquad [B_{j},B_i]=0,
\end{align}

\begin{align*}
    \text{for }|\alpha_i|=\overline{0}, i\neq j \text{ and }i\sim j, \text{ we ~have }\\
    \end{align*}
\begin{align}\label{squareSerre5adad}
    [B_i,[B_i,B_j]_{q_i}]_{q_i^{-1}}=\begin{cases}
q_i\varsigma_{i}B_{j}&\hspace{0.35cm}\text{if }i=\tau i,\\[0.3em]
-(q_i+q_i^{-1})(q_i\varsigma_{\tau i}\mathrm{k}_{i}B_i+q_i^{-2}\varsigma_{i}\mathrm{k}_{i}^{-1}B_i)&\hspace{0.35cm}\text{if }j=\tau i,\\[0.3em]
0&\hspace{0.35cm}\text{otherwise},
\end{cases}
\end{align}
\begin{align*}
    \text{for }|\alpha_j|=\overline{1}, i=j-1 \text{ and }k=j+1, \text{ we have } \\
\end{align*}
\begin{align}\label{Serre6adad}
        &[[[B_{i},B_j]_{q_j},B_{k}]_{q_{k}}],B_j]\\
        =&\notag \begin{cases}
0&\hspace{0.35cm}\text{if }\{i, j, k\}\cap \{\tau i, \tau j, \tau k\}\in \{\varnothing, \{i\}, \{k\}\},\\[0.3em]
(-1)^{|\alpha_i|}\Big(q_{\tau j}^{-1}\varsigma_{\tau j}[B_{j},B_{i}]_{q_j}\mathrm{k}_{j}+\varsigma_{j}[B_{j},B_{i}]_{q_{\tau j}}\mathrm{k}_{j}^{-1}\Big)&\hspace{0.35cm}\text{if }k=\tau j,\\[0.3em]
 (-1)^{|\alpha_k|}\Big(q_j^{-1}\varsigma_{\tau j}[B_j,B_k]_{q_{\tau j}}\mathrm{k}_j+\varsigma_j[B_j,B_k]_{q_j}\mathrm{k}_{j}^{-1}\Big)&\hspace{0.35cm}\text{if }i=\tau j,
\end{cases}
\end{align}
where all indices are taken modulo $\mathfrak{n}$.
\end{thm}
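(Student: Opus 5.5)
The proof will follow the template of Theorem \ref{def:ipreseab}, with extra work only at the new node $0$, which is fixed by $\tau$ and whose generator carries the additional term $\kappa_{0}K_{0}^{-1}$.

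\emph{Coideal property.} For $i\neq 0$ the computation $\Delta(B_i)=B_i\otimes K_i^{-1}+1\otimes F_i+\varsigma_i\mathrm{k}_i^{-1}\otimes E_{\tau i}K_i^{-1}$ carries over unchanged. For $i=0$ one gets
\begin{align*}
\Delta(B_0)=B_0\otimes K_0^{-1}+1\otimes F_0+\varsigma_0\otimes E_0K_0^{-1}+\kappa_0(K_0^{-1}\otimes K_0^{-1}-K_0^{-1}\otimes K_0^{-1})+\kappa_0\otimes\!\cdots,
\end{align*}
which after cancellation equals $B_0\otimes K_0^{-1}+1\otimes(F_0+\varsigma_0E_0K_0^{-1})$. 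The point is that $\mathrm{k}_0=K_0K_{\tau0}^{-1}=1$. Also $\Delta(\mathrm{k}_j^{\pm1})=\mathrm{k}_j^{\pm1}\otimes\mathrm{k}_j^{\pm1}$. Hence $\Delta(\widehat{U}^{\imath}(\mathfrak{sl}_{\mathfrak{n}}))\subseteq\widehat{U}^{\imath}(\mathfrak{sl}_{\mathfrak{n}})\otimes\widehat{U}(\mathfrak{sl}_{\mathfrak{n}})$.

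\emph{The relations.} First I derive $d_i=d_{\mathfrak{n}+1-i}$ from the hypothesis $d_i=(-1)^{|\alpha_i|}d_{\tau i}$. Since $|\alpha_0|=\overline{0}$, this also gives $d_1=d_{\mathfrak{n}}$. Next I deduce $(\alpha_i,\alpha_j)=(\alpha_{\tau i},\alpha_{\tau j})$. Relations \eqref{KEF1relada}, \eqref{skunk2relaad} and \eqref{square3333adad} then follow directly. For \eqref{square3333adad} with $i$ or $j$ equal to $0$, the term $\kappa_0K_0^{-1}$ commutes with $B_j$ up to the factor $q^{(\alpha_0,\alpha_j)}=1$, because $j$ is not adjacent to $0$.

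Relations \eqref{square33adad}, the second and third cases of \eqref{squareSerre5adad}, and \eqref{Serre6adad} away from node $0$ are proved exactly as in \cite[Prop.~3.9--3.11]{CS26}. The new ingredients are the first case of \eqref{squareSerre5adad} (with $i=0$ and $j\in\{1,\mathfrak{n}-1\}$), and the cases of \eqref{Serre6adad} whose triple contains $0$.

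For the first case I will expand $[B_0,[B_0,B_j]_{q}]_{q^{-1}}$ into its components according to the weights of $F$ and $E$. The components coming only from $F$ and only from $E$ vanish by \eqref{racoon}. The mixed $F_0E_0$ components reduce, via \eqref{skunk}, to Cartan terms times $F_j$ or $E_{\tau j}$. These should assemble to $q\varsigma_0B_j$. The $\kappa_0$-terms should cancel, because $K_0^{-1}$ $q$-commutes with $B_j$ with exactly the exponent that the $q$-brackets kill.

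For \eqref{Serre6adad}, the triples containing $0$ meet their $\tau$-image exactly in $\{i\}$ or $\{k\}$, so the right-hand side is $0$. I will check this by observing that the $E_0$- and $\kappa_0$-contributions are annihilated by the odd Serre relation \eqref{Serre}.

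\emph{Injectivity.} Let $\mathcal{A}$ be the abstract algebra on these relations. Its relations have leading terms equal to the Serre relations of $\widehat{U}^{-}$. The lower-order $\kappa_0K_0^{-1}$ term only lowers the filtration degree, so the spanning argument yielding \eqref{bjbj1} works verbatim. The elements $\tilde{k}^{\lambda}\tilde{b}_J$ with $J\in\mathcal{I}$ therefore span $\mathcal{A}$.

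Linear independence of their images uses the $F$-degree filtration together with the triangular decomposition \eqref{atriang}, as before. There is one extra requirement: the monomials $\mathrm{k}^{\lambda}=\prod_{j=1}^{r}\mathrm{k}_j^{\lambda_j}$ must be distinct elements of $\widehat{U}^0$. This amounts to the linear independence of the coweights $d_jh_j-d_{\tau j}h_{\tau j}$ for $1\leq j\leq r$ in $P^\vee$, and I will check it directly on the $\epsilon^\vee$-coordinates.

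\emph{Expected obstacle.} The main difficulty is the explicit verification of the inhomogeneous relation $[B_0,[B_0,B_j]_{q}]_{q^{-1}}=q\varsigma_0B_j$. Getting the coefficient, and the cancellation of the $\kappa_0$-terms, right requires careful bookkeeping of the $K_0^{\pm1}$ commutation factors. If the cancellation fails, I would first recheck whether $K_0$ should be taken as $q^{d_{\mathfrak{n}}\epsilon_{\mathfrak{n}}^\vee-d_1\epsilon_1^\vee}$ with $d_1=d_{\mathfrak{n}}$, so that $K_0^{-1}$ commutes appropriately with $F_1E_{\mathfrak{n}-1}$.
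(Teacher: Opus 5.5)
Your plan is essentially the argument the paper intends: the paper omits this proof and refers back to Theorem~\ref{def:ipreseab}, whose steps are the coproduct computation, the checks of the relations, and the filtration/triangular-decomposition proof of injectivity. Your node-$0$ additions are all correct: $\mathrm{k}_0=1$ in $\Delta(B_0)$, $d_1=d_{\mathfrak n}$, reducing $[B_0,[B_0,B_j]_{q_0}]_{q_0^{-1}}=q_0\varsigma_0B_j$ to the $[E_0,F_0]$ cross terms, and the independence of the exponents of $\mathrm{k}_1,\dots,\mathrm{k}_r$.

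The reasons you give for two of the vanishing claims need adjusting, although a componentwise check confirms the stated right-hand sides.
\begin{itemize}
\item $K_0^{-1}$ does not $q$-commute with $B_j$ by a single factor. It rescales $F_j$ by $q_0^{-1}$ but $E_{\tau j}K_j^{-1}$ by $q_0$. Both factors are roots killed by the double bracket, but the terms linear in $\kappa_0$ still have to be checked component by component.
\item In the quartic relations through node $0$, the $E_0$- and $\kappa_0$-pieces do not vanish because of \eqref{Serre}, which only disposes of the purely-$E$ component. They vanish by isotropy of $\alpha_j$ ($F_j^2=0$, $q_{j+1}=q_j^{-1}$), by $q$-factor bookkeeping, and by a pairwise cancellation between taking $E_{\tau j}$ from the first or the second copy of $B_j$.
\end{itemize}
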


\subsection{}
Recall that in \S\ref{comm} we have defined an infinite dimensional superspace $\mathbb{V}$ with an $\F$-basis $\{v_{j}\:|\:j\in \mathbb{Z}\}$ and parities $|v_{j}|=p(j)$. Let $W$ be the vector superspace over $\C$ with a basis $\{v_{1}, v_{2},\ldots,v_{\mathfrak{n}}\}$ and parities $|v_{j}|=p(j)$. Associated to $W$, we have the general linear Lie superalgebra $\mathfrak{g} = \mathfrak{gl}(W)$; moreover, in $\S$\ref{iquma}, we have defined the corresponding quantum affine superalgebra and its coideal subalgebra, which are also denoted by $\widehat{U}(\mathfrak{sl}_{\mathfrak{n}})$ and $\widehat{U}^{\imath}(\mathfrak{sl}_{\mathfrak{n}})$, respectively.

Let $\mathbb{V}_{\mathfrak{n}}$ be the $\F$-subspace of $\mathbb{V}$ spanned by $\{v_{j}\:|\:j\in \mathbb{Z} \text{ and }j\not\equiv 0 \text{ (mod $N$)}\}$. Then we have the following lemma.
\begin{lem}\label{uvacinotda}
The following defines an action of $\widehat{U}(\mathfrak{sl}_{\mathfrak{n}})$ on $\mathbb{V}_{\mathfrak{n}}$:
\begin{align*}
E_{i} v_{j+1}=\left\{
\begin{array}{ll}
v_j&\hspace{0.35cm}\text{if $j\equiv i\neq 0\pmod N$},\\[0.3em]
v_{j-1}&\hspace{0.35cm}\text{if $j\equiv i= 0\pmod N$},\\[0.3em]
0&\hspace{0.35cm}\text{otherwise}.
\end{array}
\right.
\end{align*}
\begin{align*}
F_{i} v_{j}=\left\{
\begin{array}{ll}
v_{j+1}&\hspace{0.35cm}\text{if $j\equiv i\neq 0\pmod N$},\\[0.3em]
v_{j+2}&\hspace{0.35cm}\text{if $j+1\equiv i=0\pmod N$},\\[0.3em]
0&\hspace{0.35cm}\text{otherwise}.
\end{array}
\right.
\end{align*}
\begin{align*}
K_{0} v_{j}=\left\{
\begin{array}{ll}
q^{d_{\mathfrak{n}}}v_j&\hspace{0.35cm}\text{if $j\equiv -1\pmod N$},\\[0.3em]
q^{-d_{1}}v_j&\hspace{0.35cm}\text{if $j\equiv 1\pmod N$},\\[0.3em]
v_j&\hspace{0.35cm}\text{otherwise}.
\end{array}
\right.
\end{align*}
\begin{align*}
\text{For }1\leq i\leq \mathfrak{n}-1,\qquad K_{i} v_{j}=\left\{
\begin{array}{ll}
q^{d_{i}}v_j&\hspace{0.35cm}\text{if $j\equiv i\pmod N$},\\[0.3em]
q^{-d_{i+1}}v_j&\hspace{0.35cm}\text{if $j\equiv i+1\pmod N$},\\[0.3em]
v_j&\hspace{0.35cm}\text{otherwise}.
\end{array}
\right.
\end{align*}
\end{lem}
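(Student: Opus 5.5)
The plan is to reduce the statement to Lemma \ref{uvactoni} by an explicit change of labels. Set $\mathbb{V}_{\mathfrak{n}}$ as above and define the bijection $\theta:\mathbb{Z}\to\{j\in\mathbb{Z}\mid j\not\equiv 0 \pmod N\}$ that enumerates the surviving indices in increasing order. Concretely, $\theta$ sends a block of $\mathfrak{n}=N-1$ consecutive integers onto the residues $1,\dots,N-1$ of one period of length $N$. Under $u_k:=v_{\theta(k)}$, the space $\mathbb{V}_{\mathfrak{n}}$ becomes the natural infinite-dimensional module of Lemma \ref{uvactoni} for rank $\mathfrak{n}$, with $\mathfrak{n}$ in place of $N$. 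The parities match, since $p$ restricted to nonzero residues gives exactly the parity sequence of $W$.

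In the new labels the stated formulas read $E_iu_{k+1}=u_k$ and $F_iu_k=u_{k+1}$ when $k\equiv i\pmod{\mathfrak{n}}$. This is exactly the shape of Lemma \ref{uvactoni}, so the relations \eqref{KEF}--\eqref{Serre} hold by that lemma applied to $\widehat{U}_{\mathfrak{n}}$, and restricting to $\widehat{U}(\mathfrak{sl}_{\mathfrak{n}})$ gives the claim.

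To make this airtight, I check the translation generator by generator.

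For $1\le i\le \mathfrak{n}-1$, consecutive nonzero residues $j,j+1$ with $j\equiv i$ stay consecutive, so the rules $v_j\mapsto v_{j+1}$ and $v_{j+1}\mapsto v_j$ pull back to the standard rules. The $K_i$-eigenvalues $q^{d_i}$ and $q^{-d_{i+1}}$ agree with $K_i=q^{d_i\epsilon_i^\vee-d_{i+1}\epsilon_{i+1}^\vee}$ acting on $u_k$.

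For $i=0$, the rule $F_0v_j=v_{j+2}$ with $j\equiv -1$ jumps over the removed vector $v_{j+1}$, $j+1\equiv0$. In the $u$-labels this is the step from residue $\mathfrak{n}$ to residue $1$ of the next period, so $F_0u_k=u_{k+1}$ for $k\equiv 0\pmod{\mathfrak{n}}$. Similarly, $E_0v_{j+1}=v_{j-1}$ with $j\equiv 0$ is $E_0u_{k+1}=u_k$. The eigenvalues $q^{d_{\mathfrak{n}}}$ on $v_{-1}$ and $q^{-d_1}$ on $v_1$ are those of $K_0=q^{d_{\mathfrak{n}}\epsilon_{\mathfrak{n}}^\vee-d_1\epsilon_1^\vee}$.

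The only step needing care is this $i=0$ bookkeeping. First, $E_0$ must be read as acting on $v_{j+1}$ with $j\equiv 0$, i.e. on $v_1$ modulo $N$, with the convention that $v_{j-1}$ means the previous surviving vector. Second, the periodicity shift matches: one shift $z$ by $N$ in $\mathbb{V}$ corresponds to one shift by $\mathfrak{n}$ in the $u$-labels.

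Once the identification is justified, no Serre relation needs to be rechecked, since all of them are inherited from Lemma \ref{uvactoni}.
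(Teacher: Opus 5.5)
Your proposal is correct, and it takes a somewhat different route from the paper.

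\textbf{What the paper does.} The paper gives no proof of this lemma; it only says it follows by ``arguments similar to'' those in \S\S\ref{def:quanaffis}--\ref{swrecio}. The model it has in mind is the proof of Lemma \ref{uvactoni}, which is a direct verification. One computes the $K_i$-eigenvalues and checks $[E_i,F_i]=\frac{K_i-K_i^{-1}}{q_i-q_i^{-1}}$ on the two kinds of basis vectors, using $d_i(-1)^{|\alpha_i|}=d_{i+1}$. The relations \eqref{square}--\eqref{Serre} are then disposed of by noting that a product of $E$'s or $F$'s acts nonzero only along a chain of consecutive indices. Redoing this on $\mathbb{V}_{\mathfrak n}$ would mean treating $i=0$ separately, since $E_0$ and $F_0$ jump over the deleted vectors $v_{aN}$.

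\textbf{What your route does.} Your relabeling $u_k=v_{\theta(k)}$, with $\theta(k+a\mathfrak n)=k+aN$ for $1\le k\le \mathfrak n$ and $a\in\mathbb{Z}$, removes that bookkeeping. It shows $\mathbb{V}_{\mathfrak n}$ is literally the natural module of rank $\mathfrak n$: the parities match, $F_0v_{aN-1}=v_{aN+1}$ becomes $F_0u_{a\mathfrak n}=u_{a\mathfrak n+1}$, and the $K_0$-eigenvalues are those of $q^{d_{\mathfrak n}\epsilon_{\mathfrak n}^\vee-d_1\epsilon_1^\vee}$. So every relation is inherited at once. It also explains the later tensor formulas of Lemma \ref{auivduddav}: the shift $s_0=2$ and the sets $A_0(f)$, $D_0(f)$ are just Lemma \ref{auivdaddu} transported back through $\theta$. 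The ``convention'' you mention for $E_0$ is not actually needed: for $j\equiv 0$, $v_{j-1}$ is literally the previous surviving vector.

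\textbf{One point to state explicitly.} You are applying Lemma \ref{uvactoni} outside the setting in which the paper states it. It is stated for even rank $N=2r+2$ and the parity pattern of \S\ref{comm}, while you need odd rank $\mathfrak n$ and the parity pattern of $W$. This is harmless, because its proof uses only three things:
\begin{itemize}
\item the formula $K_i=q^{d_i\epsilon_i^\vee-d_{i+1}\epsilon_{i+1}^\vee}$;
\item the identity $d_i(-1)^{|\alpha_i|}=d_{i+1}$;
\item the chain argument, which kills every Serre monomial with a repeated index as soon as the rank is at least $4$ (here $\mathfrak n=2r+1\ge 5$).
\end{itemize}
You should say this rather than leave it implicit.
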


From Lemma \ref{uvacinotda}, we can obtain an explicit description of the action of $\widehat{U}^{\imath}(\mathfrak{sl}_{\mathfrak{n}})$ on $\mathbb{V}_{\mathfrak{n}}$.
\begin{lem}\label{uimahtvacinotha}
The following defines an action of $\widehat{U}^{\imath}(\mathfrak{sl}_{\mathfrak{n}})$ on $\mathbb{V}_{\mathfrak{n}}$:
\begin{align*}
\text{For } 1\le a\le r-1,\qquad
\mathrm{k}_a v_j= \begin{cases} q^{d_a}v_j, &\text{if }j\equiv a\text{ or }\mathfrak{n}-a+1\pmod N,\\[0.1em] q^{-d_{a+1}}v_j, &\text{if }j\equiv a+1\text{ or }\mathfrak{n}-a\pmod N,\\[0.1em] v_j,&\text{otherwise}. \end{cases}
\end{align*}
\begin{align*}
\mathrm{k}_{r} v_j= \begin{cases} q^{d_r}v_j, &\text{if }j\equiv r\text{ or }r+2\pmod N,\\[0.1em] q^{-2d_{r+1}}v_j, &\text{if }j\equiv r+1\pmod N,\\[0.1em] v_j,&\text{otherwise}. \end{cases}
\end{align*}
\begin{align*}
\text{For } 1\le i\le \mathfrak{n}-1\text{ and }i\ne r+1,\qquad
B_iv_j= \begin{cases} v_{j+1},&\text{if }j\equiv i\pmod N,\\[0.1em] \varsigma_i v_{j-1},&\text{if }j\equiv \mathfrak{n}-i+1\pmod N,\\[0.1em] 0,&\text{otherwise}. \end{cases}
\end{align*}
\begin{align*}
B_{r+1}v_j= \begin{cases} v_{j+1}+\varsigma_{r+1}q^{-d_{r+1}}v_{j-1}, &\text{if }j\equiv r+1\pmod N,\\[0.1em] 0,&\text{otherwise}. \end{cases}
\end{align*}
\begin{align*} B_0v_j= \begin{cases} v_{j+2}+\kappa_0q^{-d_{\mathfrak{n}}}v_j, &\text{if }j\equiv-1\pmod N,\\[0.1em] \varsigma_0q^{d_1}v_{j-2} +\kappa_0q^{d_1}v_j, &\text{if }j\equiv1\pmod N,\\[0.1em] \kappa_0v_j, &\text{otherwise}. \end{cases}
\end{align*}
\end{lem}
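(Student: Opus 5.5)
The plan is to derive Lemma \ref{uimahtvacinotha} directly from Lemma \ref{uvacinotda}. The restriction of a $\widehat{U}(\mathfrak{sl}_{\mathfrak{n}})$-module to the subalgebra $\widehat{U}^{\imath}(\mathfrak{sl}_{\mathfrak{n}})$ is automatically a module. So nothing needs to be checked about relations \eqref{KEF1relada}--\eqref{Serre6adad}; it suffices to compute each generator, written as an element of $\widehat{U}(\mathfrak{sl}_{\mathfrak{n}})$, on each basis vector $v_j$ with $j\not\equiv 0 \pmod N$. Throughout I will use that $\tau(i)=\mathfrak{n}-i$ for $1\le i\le \mathfrak{n}-1$ and $\tau(0)=0$. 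I will also use the parity symmetry $d_i=d_{\mathfrak{n}+1-i}$, which should follow from $d_i=(-1)^{|\alpha_i|}d_{\tau i}$ exactly as \eqref{didni} was derived.

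First I will treat the Cartan part. Since $\mathrm{k}_a=K_aK_{\mathfrak{n}-a}^{-1}$, Lemma \ref{uvacinotda} gives the eigenvalues as products. $K_a$ contributes $q^{d_a}$ on $j\equiv a$ and $q^{-d_{a+1}}$ on $j\equiv a+1$. $K_{\mathfrak{n}-a}^{-1}$ contributes $q^{-d_{\mathfrak{n}-a}}$ on $j\equiv \mathfrak{n}-a$ and $q^{d_{\mathfrak{n}-a+1}}$ on $j\equiv \mathfrak{n}-a+1$. Rewriting these exponents with $d_{\mathfrak{n}-a}=d_{a+1}$ and $d_{\mathfrak{n}-a+1}=d_a$ gives the formula for $1\le a\le r-1$. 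For $a=r$, the two index sets overlap at $j\equiv r+1$, producing $q^{-2d_{r+1}}$, while $j\equiv r$ and $j\equiv r+2$ each give $q^{d_r}$.

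Next I will treat $B_i$ for $i\ne 0$. The $F_i$ term sends $v_j\mapsto v_{j+1}$ when $j\equiv i$. The term $\varsigma_iE_{\tau i}K_i^{-1}$ is nonzero only when $j\equiv \tau i+1=\mathfrak{n}-i+1$, where it gives $v_{j-1}$ times the $K_i^{-1}$-eigenvalue of $v_j$. For $i\ne r+1$ the residue $\mathfrak{n}-i+1$ differs from $i$ and $i+1$, so that eigenvalue is $1$. For $i=r+1$ we have $\mathfrak{n}-i+1=r+1=i$, so both terms act on the same vector and the factor is $q^{-d_{r+1}}$. One must double-check the boundary indices $i=1$ and $i=\mathfrak{n}-1$, where $j\pm1$ may be $\equiv 0$ and hence skipped. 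By Lemma \ref{uvacinotda}, only $E_0,F_0$ jump by $2$, so $E_{\mathfrak{n}-1}$ and $F_{\mathfrak{n}-1}$ keep shift $\pm1$ and no issue arises.

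The main obstacle is $B_0=F_0+\varsigma_0E_0K_0^{-1}+\kappa_0K_0^{-1}$, because of the double-step convention at the omitted residue $0$. By Lemma \ref{uvacinotda}, $F_0$ sends $v_j\mapsto v_{j+2}$ for $j\equiv -1$, and $E_0$ sends $v_j\mapsto v_{j-2}$ for $j\equiv 1$. $K_0^{-1}$ acts by $q^{-d_{\mathfrak{n}}}$ on $j\equiv -1$, by $q^{d_1}$ on $j\equiv 1$, and trivially otherwise. Collecting terms gives, on $j\equiv -1$, $v_{j+2}+\kappa_0q^{-d_{\mathfrak{n}}}v_j$; on $j\equiv 1$, $\varsigma_0q^{d_1}v_{j-2}+\kappa_0q^{d_1}v_j$; and $\kappa_0v_j$ otherwise. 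I will verify the edge case $\mathfrak{n}=N-1$, where $-1\equiv \mathfrak{n}$, to ensure the residues $\pm1$ are distinct, which holds since $N\ge 6$.
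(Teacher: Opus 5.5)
Your proposal is correct and takes the route the paper intends. The paper omits the proof, but as with Lemma \ref{uimahtvactoni} the description comes from restricting the $\widehat{U}(\mathfrak{sl}_{\mathfrak{n}})$-action of Lemma \ref{uvacinotda} and evaluating $\mathrm{k}_a=K_aK_{\mathfrak{n}-a}^{-1}$ and the pieces of each $B_i$ on basis vectors; your case computations check out, including the overlap at $j\equiv r+1$ and the double-step action of $E_0,F_0$. One small point to make explicit: the symmetry $d_1=d_{\mathfrak{n}}$, needed for $\mathrm{k}_1$, does not follow from $d_i=(-1)^{|\alpha_i|}d_{\tau i}$ for $1\le i\le r$ alone. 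It comes from the standing assumption $|\alpha_0|=\overline{0}$, or alternatively from $\tau$ being parity-preserving, just as $d_0=d_N$ was an extra input for \eqref{didni}.
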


Using the iterated coproduct, we can define an action of $\widehat{U}(\mathfrak{sl}_{\mathfrak{n}})$ on $\mathbb{V}_{\mathfrak{n}}^{\otimes d}$ from the left as follows.
\begin{lem}\label{auivduddav}
The following defines an action of $\widehat{U}(\mathfrak{sl}_{\mathfrak{n}})$ on $\mathbb{V}_{\mathfrak{n}}^{\otimes d}$: for $i \in \hat{I}_{\mathfrak{n}}$ and $M_{f}\in \mathbb{V}_{\mathfrak{n}}^{\otimes d}$,
\begin{align*}
E_{i}M_{f}=\sum\limits_{a\in A_{i}(f)}\epsilon_{i, a}(f)\Big(\prod_{b< a}\chi_{i}(f_{b})\Big)M_{f-s_{i}e_{a}},
\end{align*}
\begin{align*}
F_{i}M_{f}=\sum\limits_{a\in D_{i}(f)}\epsilon_{i, a}(f)\Big(\prod_{b> a}\chi_{i}(f_{b})^{-1}\Big)M_{f+s_{i}e_{a}},
\end{align*}
\begin{align*}
K_{i}M_{f}=\prod_{b=1}^{d}\chi_{i}(f_{b})M_{f},
\end{align*}
where
\begin{align*}
A_{i}(f)= \begin{cases} \{a\:|\:f_a\equiv 1\pmod N\}, &\text{if }i=0,\\[0.1em] \{a\:|\:f_a\equiv i+1\pmod N\},&\text{if }i\neq 0, \end{cases}
\end{align*}
\begin{align*}
D_{i}(f)= \begin{cases} \{a\:|\:f_a\equiv -1\pmod N\}, &\text{if }i=0,\\[0.1em] \{a\:|\:f_a\equiv i\pmod N\},&\text{if }i\neq 0, \end{cases}
\end{align*}
\begin{align*}
\chi_{i}(t)= \begin{cases} q^{d_{i}\delta_{t\equiv i}-d_{i+1}\delta_{t\equiv i+1}}, &\text{if }i\neq 0,\\[0.1em] q^{d_{\mathfrak{n}}\delta_{t\equiv -1}-d_{1}\delta_{t\equiv 1}},&\text{if }i=0, \end{cases}\qquad s_{i}= \begin{cases} 2, &\text{if }i=0,\\[0.1em] 1,&\text{if }i\neq 0. \end{cases}
\end{align*}
\end{lem}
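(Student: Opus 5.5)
The plan follows the proof of Lemma \ref{auivdaddu}. We use the iterated coproduct on the single-factor module $\mathbb{V}_{\mathfrak{n}}$ of Lemma \ref{uvacinotda}. First check that $\mathbb{V}_{\mathfrak{n}}$ is stable under the generators: by Lemma \ref{uvacinotda}, $F_0$ sends $v_j$ with $j\equiv -1$ to $v_{j+2}$, and $E_0$ sends $v_{j+1}$ with $j\equiv 0$ to $v_{j-1}$. Both skip the excluded index $\equiv 0 \pmod N$. This is why the shift $s_0=2$ appears, and why $D_0(f)$ and $A_0(f)$ consist of the indices $a$ with $f_a\equiv -1$ and $f_a\equiv 1$ respectively. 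Thus $\mathbb{V}_{\mathfrak{n}}$ is a $\widehat{U}(\mathfrak{sl}_{\mathfrak{n}})$-module, the relabelling $\mathbb{V}_{\mathfrak{n}}\cong$ the natural $\mathfrak{n}$-periodic evaluation module being the one implicit in Lemma \ref{uvacinotda}. Since $\widehat{U}(\mathfrak{sl}_{\mathfrak{n}})$ is a Hopf sub-superalgebra, $\mathbb{V}_{\mathfrak{n}}^{\otimes d}$ is then automatically a module via $\Delta^{(d-1)}$. It remains only to compute the resulting formulas.

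Next, by induction on $d$ from $\Delta(E_i)=E_i\otimes 1+K_i\otimes E_i$, $\Delta(F_i)=F_i\otimes K_i^{-1}+1\otimes F_i$ and $\Delta(K_i)=K_i\otimes K_i$, write
\begin{align*}
\Delta^{(d-1)}(E_{i})=\sum_{a=1}^{d}K_{i}^{\otimes(a-1)}\otimes E_{i}\otimes 1^{\otimes(d-a)},\qquad
\Delta^{(d-1)}(F_{i})=\sum_{a=1}^{d}1^{\otimes(a-1)}\otimes F_{i}\otimes (K_{i}^{-1})^{\otimes(d-a)},
\end{align*}
together with $\Delta^{(d-1)}(K_i)=K_i^{\otimes d}$. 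Apply these to $M_f=v_{f_1}\otimes\cdots\otimes v_{f_d}$, using the super tensor product rule $(x_1\otimes\cdots\otimes x_d)(v_{f_1}\otimes\cdots\otimes v_{f_d})=(-1)^{\sum_{b<a}|x_a||v_{f_b}|}\cdots$. Only the $a$-th factor $E_i$ or $F_i$ is odd, with parity $|\alpha_i|$. The sign is therefore exactly $\epsilon_{i,a}(f)=(-1)^{|\alpha_i|\sum_{s<a}p(f_s)}$.

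For each factor $K_i^{\pm1}$, read off the eigenvalue $\chi_i(f_b)^{\pm1}$ from Lemma \ref{uvacinotda}. For $i\neq 0$ this is $q^{d_i\delta_{t\equiv i}-d_{i+1}\delta_{t\equiv i+1}}$. For $i=0$ the formula for $K_0$ gives $q^{d_{\mathfrak{n}}\delta_{t\equiv -1}-d_1\delta_{t\equiv 1}}$; note that $-1\equiv \mathfrak{n}\pmod N$ is not the relevant congruence, so we must use the labels as in the lemma. The $a$-th factor contributes $v_{f_a\pm s_i}$ exactly when $a\in D_i(f)$ or $a\in A_i(f)$. Summing over $a$ yields the stated formulas for $E_i$, $F_i$ and $K_i$.

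The only delicate point, and the one I expect to be the main obstacle, is the bookkeeping at the node $0$. There one must confirm that the congruence classes in Lemma \ref{uvacinotda} ($j\equiv -1$ for $F_0$, $j+1\equiv 1$ for $E_0$, i.e.\ $f_a\equiv 1$) match $A_0(f)$, $D_0(f)$ and $\chi_0$. One must also confirm that the single-factor relations \eqref{skunk}, \eqref{racoon} and \eqref{Serre} genuinely hold on $\mathbb{V}_{\mathfrak{n}}$ with the shifted action. For this I would verify $[E_0,F_0]=(K_0-K_0^{-1})/(q_0-q_0^{-1})$ directly on $v_{-1}$ and $v_1$, using $|\alpha_0|=\overline{0}$. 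I would then check the Serre relations as in Lemma \ref{uvactoni}, via the observation that products of $E$'s or of $F$'s act nonzero only along consecutive chains in the $\mathfrak{n}$-cycle.
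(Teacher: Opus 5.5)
Your proposal is correct and follows the paper's route: the paper omits this proof, pointing back to Lemma \ref{auivdaddu}, and that proof is exactly the iterated-coproduct computation $\Delta^{(d-1)}(E_i)=\sum_a K_i^{\otimes(a-1)}\otimes E_i\otimes 1^{\otimes(d-a)}$, $\Delta^{(d-1)}(F_i)=\sum_a 1^{\otimes(a-1)}\otimes F_i\otimes (K_i^{-1})^{\otimes(d-a)}$, applied to the single-factor module of Lemma \ref{uvacinotda}, with the Koszul sign producing $\epsilon_{i,a}(f)$. One harmless slip: since $\mathfrak{n}=N-1$, one does have $-1\equiv\mathfrak{n}\pmod N$, contrary to your remark, and this is precisely why the vectors $v_j$ with $j\equiv-1$ carry the $K_0$-eigenvalue $q^{d_{\mathfrak{n}}}$ coming from $K_0=q^{d_{\mathfrak{n}}\epsilon_{\mathfrak{n}}^\vee-d_1\epsilon_1^\vee}$.
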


By restriction we obtain an action of $\widehat{U}^{\imath}(\mathfrak{sl}_{\mathfrak{n}})$ on $\mathbb{V}_{\mathfrak{n}}^{\otimes d}$.
\begin{lem}\label{adviuud}
The following defines an action of $\widehat{U}^{\imath}(\mathfrak{sl}_{\mathfrak{n}})$ on $\mathbb{V}_{\mathfrak{n}}^{\otimes d}$: for $1\leq a\leq r$, $i \in \hat{I}_{\mathfrak{n}}$ and $M_{f}\in \mathbb{V}_{\mathfrak{n}}^{\otimes d}$,
\begin{align}\label{hafmha}
\mathrm{k}_a M_{f}=\Big(\prod_{b=1}^{d}\chi_{a}(f_{b})\chi_{\tau a}(f_{b})^{-1}\Big)M_{f}=q^{d_a[\gamma_{a}(f)+\gamma_{\mathfrak{n}-a+1}(f)]-d_{a+1}[\gamma_{a+1}(f)+\gamma_{\mathfrak{n}-a}(f)]}M_{f},
\end{align}
and
\begin{align}\label{bifmib}
B_{i}M_{f}=&\sum\limits_{a\in D_{i}(f)}\epsilon_{i, a}(f)\Big(\prod_{b> a}\chi_{i}(f_{b})^{-1}\Big)M_{f+s_{i}e_{a}}\\
&\notag+\varsigma_{i}\sum\limits_{a\in A_{\tau i}(f)}\epsilon_{i, a}(f)\Big(\prod_{b< a}\chi_{i}(f_{b})^{-1}\chi_{\tau i}(f_{b})\Big)\chi_{i}(f_{a})^{-1}\Big(\prod_{b> a}\chi_{i}(f_{b})^{-1}\Big)M_{f-s_{i}e_{a}}\\
&\notag+\delta_{i0}\kappa_{0}\Big(\prod_{b=1}^{d}\chi_{0}(f_{b})^{-1}\Big)M_{f}.
\end{align}
\end{lem}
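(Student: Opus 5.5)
The plan is to mirror the proof of Lemma \ref{auivd}. First I will write the iterated coproduct of each generator of $\widehat{U}^{\imath}(\mathfrak{sl}_{\mathfrak{n}})$. Then I will read off its action on a pure tensor $M_f$ using Lemma \ref{auivduddav}, which already describes the $\widehat{U}(\mathfrak{sl}_{\mathfrak{n}})$-action on $\mathbb{V}_{\mathfrak{n}}^{\otimes d}$. Since $\widehat{U}^{\imath}(\mathfrak{sl}_{\mathfrak{n}})$ is a subalgebra of $\widehat{U}(\mathfrak{sl}_{\mathfrak{n}})$ (Theorem \ref{def:ipreseabslmadada}), restriction automatically gives an action. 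The content of the lemma is therefore only the explicit formulas \eqref{hafmha} and \eqref{bifmib}.

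\textbf{The elements $\mathrm{k}_a$.} We have $\mathrm{k}_a=K_aK_{\tau a}^{-1}$ and $\Delta^{(d-1)}(\mathrm{k}_a)=\mathrm{k}_a^{\otimes d}$. Lemma \ref{auivduddav} then gives the first equality in \eqref{hafmha}. For the second equality I will compute $\chi_a(t)\chi_{\tau a}(t)^{-1}$ residue by residue, using $\tau a=\mathfrak{n}-a$.
\begin{itemize}
\item The factor $\chi_{\tau a}^{-1}$ contributes $q^{-d_{\mathfrak{n}-a}}$ on $t\equiv\mathfrak{n}-a$ and $q^{d_{\mathfrak{n}-a+1}}$ on $t\equiv\mathfrak{n}-a+1$.
\item The parity condition $d_i=(-1)^{|\alpha_i|}d_{\tau i}$, together with $\tau$ preserving parity, gives (as in \eqref{didni}) $d_{\mathfrak{n}-a+1}=d_a$ and $d_{\mathfrak{n}-a}=d_{a+1}$.
\item Collecting exponents yields the stated expression in the $\gamma$'s. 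When $a=r$ the two middle residues coincide, which produces the factor $q^{-2d_{r+1}}$ consistent with Lemma \ref{uimahtvacinotha}.
\end{itemize}

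\textbf{The elements $B_i$.} For $i\neq0$, write $B_i=F_i+\varsigma_iE_{\tau i}K_i^{-1}$. Then
\[
\Delta^{(d-1)}(B_i)=\sum_{a}1^{\otimes(a-1)}\otimes F_i\otimes(K_i^{-1})^{\otimes(d-a)}+\varsigma_i\sum_a(K_{\tau i}K_i^{-1})^{\otimes(a-1)}\otimes E_{\tau i}K_i^{-1}\otimes(K_i^{-1})^{\otimes(d-a)}.
\]
For $i=0$ there is one extra summand, $\kappa_0\Delta^{(d-1)}(K_0^{-1})=\kappa_0(K_0^{-1})^{\otimes d}$. The three summands are then evaluated as follows.
\begin{itemize}
\item The $F_i$-term gives the first line of \eqref{bifmib} directly from Lemma \ref{auivduddav}, with shift $s_i$.
\item For the $E_{\tau i}$-term, apply $K_i^{-1}$ at position $a$ first, which gives the factor $\chi_i(f_a)^{-1}$. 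Then apply $E_{\tau i}$, which is nonzero exactly when $a\in A_{\tau i}(f)$ and shifts $f_a$ by $-s_{\tau i}$. Since $\tau0=0$ we have $s_{\tau i}=s_i$.
\item The left factors $K_{\tau i}K_i^{-1}$ give $\prod_{b<a}\chi_i(f_b)^{-1}\chi_{\tau i}(f_b)$, and the right factors give $\prod_{b>a}\chi_i(f_b)^{-1}$.
\item The $\kappa_0$-term gives the last line of \eqref{bifmib}.
\end{itemize}

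\textbf{Signs, the main obstacle.} The only delicate point is the Koszul sign $\epsilon_{i,a}(f)$ in the $E_{\tau i}$-term. Moving $E_{\tau i}$ past the first $a-1$ tensor factors produces $(-1)^{|\alpha_{\tau i}|\sum_{b<a}p(f_b)}$. Because $\tau$ is parity-preserving, $|\alpha_{\tau i}|=|\alpha_i|$, so this sign equals $\epsilon_{i,a}(f)$. The Cartan factors are even and contribute no sign. I will also check consistency with Lemma \ref{uimahtvacinotha} in the case $d=1$, as a sanity check on the residue conventions for $i=0$, where the shifts are by $2$ and skip the residue $0$.
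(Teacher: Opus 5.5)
Your proposal is correct and takes essentially the same approach as the paper. The paper omits this proof, referring to the argument of Lemma \ref{auivd}: write the iterated coproducts of $\mathrm{k}_a$, $B_i$ (plus the extra $\kappa_0(K_0^{-1})^{\otimes d}$ term for $i=0$), then read off the action via Lemma \ref{auivduddav}. Your observations $d_{\mathfrak{n}-a+1}=d_a$, $d_{\mathfrak{n}-a}=d_{a+1}$, $s_{\tau i}=s_i$ and $|\alpha_{\tau i}|=|\alpha_i|$ are exactly what is needed to get the stated exponents, shifts and Koszul signs.
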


Recall that in Theorem \ref{the:ipreseab} we have defined a right action of $\mathbb{H}$ on $\mathbb{V}^{\otimes d}$. It is easy to verify that it naturally restricts to a right $\mathbb{H}$-module structure on $\mathbb{V}_{\mathfrak{n}}^{\otimes d}$. Then we have a variant of Theorem \ref{hatcuth}.

\begin{thm}\label{icommu}
Assume that the parameters of $\widehat{U}^{\imath}(\mathfrak{sl}_{\mathfrak{n}})$ are specialized by
$$ \kappa_0=\frac{Q_0-Q_1^{-1}}{q-q^{-1}},\qquad \varsigma_0=q^{-1}Q_0Q_1^{-1},\qquad \varsigma_{r+1} =q^{-1}Q_0^{-1}Q_1,$$
and
$$ \varsigma_i=(-1)^{|\alpha_i|} \qquad \text{for }i\in \hat{I}_{\mathfrak{n}}\setminus \{0,r+1\}.$$
Then on $\mathbb{V}_{\mathfrak{n}}^{\otimes d}$, the left action of $\widehat{U}^{\imath}(\mathfrak{sl}_{\mathfrak{n}})$, commutes with the right one of $\mathbb{H}$.
\end{thm}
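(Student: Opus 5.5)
We follow the proof of Theorem \ref{hatcuth}. First, $\mathbb{V}_{\mathfrak{n}}^{\otimes d}$ is stable under the $\mathbb{H}$-action. The operators $T_i$ for $1\le i\le d-1$ and the $X_a^{\pm1}$ only permute tensor factors or shift indices by multiples of $N$, so they preserve the condition $f_a\not\equiv 0$. The $T_0$-formulas send $v_{\overline{f}_1}$ to a combination of $v_{\pm\overline{f}_1}$ times powers of $z_1$, so they also preserve the span of the $v_j$ with $j\not\equiv 0$.

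Next, the action of $\mathbb{H}_{A}$ commutes with that of $\widehat{U}(\mathfrak{sl}_{\mathfrak{n}})$; this is the analogue of Prop. \ref{hatu}. The $T_i$-action \eqref{actions} only compares $\overline{f}_i$ and $\overline{f}_{i+1}$ in the total order on $\mathbb{I}$. On $\mathbb{V}_{\mathfrak{n}}$, the natural-module structure of Lemma \ref{uvacinotda} is that of $\widehat{U}(\mathfrak{sl}_{\mathfrak{n}})$ on the chain $\{j\not\equiv 0\}$ with the residue $0$ skipped. So the same verification as for Prop. \ref{hatu} applies, with $F_0,E_0$ moving an index by $2$ across the removed residue. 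Note that $\mathfrak{n}$ is odd, so $N$ is not the period of this chain; the commutation with the $X_a$ (shifts by $N$) is compatible because the index shift by $2$ and the residue conditions in Lemma \ref{uvacinotda} are defined modulo $N$. Hence it suffices to prove that $T_0$ commutes with $B_i$ $(i\in\hat I_{\mathfrak{n}})$ and with $\mathrm{k}_a$.

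Since $T_0$ acts only on the first tensor factor, we reduce to $d=1$ using the coproduct. We have
\[
\Delta^{(d-1)}(B_i)=B_i\otimes (K_i^{-1})^{\otimes(d-1)}+1\otimes(\cdots)+\varsigma_i\,\mathrm{k}_i^{-1}\otimes(\cdots),
\]
and for $i=0$ there is the extra term $\kappa_0 (K_0^{-1})^{\otimes d}$. The tail factors act on positions $2,\dots,d$, and $\mathrm{k}_i^{\pm1}$, $K_0^{-1}$ act diagonally by weights. These weights depend on $f_1$ only through $f_1 \bmod N$ up to sign. By the symmetry $d_a=d_{\mathfrak{n}-a+1}$, which is the analogue of \eqref{didni}, they are invariant under $v_k\mapsto v_{-k}$, so they commute with $T_0$ on the first factor. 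In particular $\mathrm{k}_a$ commutes with $T_0$ directly from \eqref{hafmha}, and $K_0$ is $T_0$-invariant on the first factor. So we must check
\[
(B_i\cdot v_kz^j)\cdot T_0=B_i\cdot(v_kz^j\cdot T_0),\qquad k\in\mathbb{I}\setminus\{0\},\ j\in\mathbb{Z},
\]
using Lemma \ref{uimahtvacinotha}.

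We split into cases as in Theorem \ref{hatcuth}.
\begin{itemize}
\item[(a)] $1\le i\le r$ and $r+2\le i\le \mathfrak{n}-1$ with $k=\pm i$ or $k=\pm(i+1)$. Here $\varsigma_i=(-1)^{|\alpha_i|}$ exactly compensates the sign $(-1)^{p(k)}$ in the $T_0$-formula, verbatim as before.
\item[(b)] $i=r+1$, acting on $v_{\pm(r+1)}$. This is the analogue of the $B_{m+n+1}$ case, and $\varsigma_{r+1}=q^{-1}Q_0^{-1}Q_1$ is forced exactly as there.
\item[(c)] $i=0$. Now $B_0$ sends $v_{-1}\mapsto v_1+\kappa_0q^{-d_{\mathfrak{n}}}v_{-1}$ and $v_1\mapsto \varsigma_0q^{d_1}v_{-1}+\kappa_0q^{d_1}v_1$ (indices taken in $\mathbb{Z}$ with the $z$-shifts), and acts by the scalar $\kappa_0$ on all other $v_k$.
\end{itemize}
For the other $v_k$ in case (c), commutation is clear since $T_0$ preserves the span of $v_{\pm k}z^{\ast}$. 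For $k=\pm1$, both sides are combinations of $v_{\pm1}z^{\ast}$. Because $v_0$ is absent, $T_0$ acts on the two-dimensional block $\{v_1,v_{-1}\}$ (times $z$-powers) by the formulas for $0<\overline f_1$ and $\overline f_1<0$. Comparing coefficients of $v_1z^{-j}$, $v_{-1}z^{-j}$ and of the geometric-sum tails gives linear conditions on $\kappa_0$ and $\varsigma_0$. We expect that $q\varsigma_0=Q_0Q_1^{-1}$ matches the leading $v_{\mp1}z^{-j}$ terms, while $(q-q^{-1})\kappa_0=Q_0-Q_1^{-1}$ is what makes the diagonal $\kappa_0$-contributions absorb the $(Q_1-Q_0^{-1})$ and $(Q_0^{-1}Q_1-1)$ tails. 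Here $d_1=d_{\mathfrak{n}}$ is used, and $|\alpha_0|=\overline0$ forces $p(1)=p(-1)$.

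\textbf{Main obstacle.} We expect the case $i=0$, $k=\pm1$ to be the main obstacle. It is new relative to Theorem \ref{hatcuth}: the $\kappa_0$-term and the shift by $2$ interact with the $z$-geometric sums. We would first handle $j=0$ to pin down the two parameter identities, then treat general $j$ by induction on $j$, using $X_1T_0=T_0X_1^{-1}+(Q_0^{-1}-Q_1)X_1+(1-Q_0^{-1}Q_1)$ and the fact that $B_0$ commutes with the $X_1$-action.
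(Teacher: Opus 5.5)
Your route is the one the paper intends. The paper omits this proof and refers to the argument for Theorem~\ref{hatcuth}: reduce to $T_0$, reduce to $d=1$ via the coproduct, then check case by case. However, your reduction step contains a false claim. It is not true that $K_0$ is $T_0$-invariant on the first factor. Since $d_1=d_{\mathfrak{n}}=1$, Lemma~\ref{uvacinotda} gives $K_0^{-1}v_1=qv_1$ and $K_0^{-1}v_{-1}=q^{-1}v_{-1}$. Because $v_1\cdot T_0=v_{-1}$, we get $(K_0^{-1}v_1)\cdot T_0=qv_{-1}\neq q^{-1}v_{-1}=K_0^{-1}(v_1\cdot T_0)$. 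Your symmetry argument via $d_a=d_{\mathfrak{n}-a+1}$ is correct for the $\mathrm{k}_a$, whose weights are even under $v_k\mapsto v_{-k}$. The weights of $K_0$, by contrast, are inverted under this map.

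Fortunately the claim is not needed, because your ``extra term'' $\kappa_0(K_0^{-1})^{\otimes d}$ is a double count. Since $\tau(0)=0$ gives $K_{\tau 0}K_0^{-1}=1$, one has
\[
\Delta(B_0)=B_0\otimes K_0^{-1}+1\otimes\big(F_0+\varsigma_0E_0K_0^{-1}\big).
\]
So in $\Delta^{(d-1)}(B_0)$ the first tensor factor is either $B_0$ itself (including its summand $\kappa_0K_0^{-1}$) or $1$. The $\kappa_0$-part is therefore entirely absorbed into the $d=1$ check for $B_0$. You should drop both the separate term and the statement about $K_0$.

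With this correction the rest of your plan works, and the case you flag as the main obstacle closes quickly.

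First, it suffices to take $j=0$. The operator $B_0$ commutes with $X_1^{\pm1}$, and by \eqref{xact0} we have $X_1^{-j}T_0\in T_0X_1^{j}+\F[X_1^{\pm1}]$.

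Second, the case $v_{-1}$ follows from the case $v_1$. Indeed $v_{-1}=v_1\cdot T_0$, and $T_0$ satisfies a quadratic relation.

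Third, for $v_1$ we compute
\[
(B_0v_1)\cdot T_0=q\varsigma_0Q_0^{-1}Q_1\,v_1+\big(q\varsigma_0(Q_0^{-1}-Q_1)+q\kappa_0\big)v_{-1},\qquad B_0(v_1\cdot T_0)=v_1+q^{-1}\kappa_0\,v_{-1}.
\]
These agree exactly when $q\varsigma_0=Q_0Q_1^{-1}$ and $(q-q^{-1})\kappa_0=Q_0-Q_1^{-1}$, which is the stated specialization. For $k\neq\pm1$, commutation is immediate because $B_0$ acts by the scalar $\kappa_0$ on $v_{\pm k}z^{\ast}$.
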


From now on, we assume that $m+n\geq d$. Recall $\Lambda_{m| n;d}$ defined in \eqref{ldabma}. We define
\begin{align*}
\Lambda_{m| n;d}^{\imath} :=\Big\{\lambda=(\lambda_{0}, \lambda_{1},\ldots,\lambda_{m}|\lambda_{m+1},\ldots,\lambda_{m+n+1})\in \Lambda_{m| n;d}\big|\lambda_{0}=0\Big\},
\end{align*}
and the following right $\mathbb{H}$-module
\begin{align*}
\mathbb{T}_{m| n;d}^{\imath} :=\bigoplus_{\lambda\in \Lambda_{m| n;d}^{\imath}}x_{\lambda^{(0)}}y_{\lambda^{(1)}}\mathbb{H},
\end{align*}
whose $\Z_2$-grading is similar to that of $\mathbb{T}_{m| n;d}$. Then we have the following variant of Lemma \ref{mathtmnd}.
\begin{lem}\label{iisom}
There exists an grading-preserving isomorphism of $\mathbb{H}$-modules $\mathbb{T}_{m| n;d}^{\imath}\cong \mathbb{V}_{\mathfrak{n}}^{\otimes d}$.
\end{lem}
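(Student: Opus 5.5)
The plan is to mimic the proof of Lemma \ref{mathtmnd}, restricted to compositions with $\lambda_0=0$. Define
\[
\kappa^{\imath}:\mathbb{T}_{m| n;d}^{\imath}\rightarrow \mathbb{V}_{\mathfrak{n}}^{\otimes d},\qquad x_{\lambda^{(0)}}y_{\lambda^{(1)}}h\mapsto M_{f^{\lambda}}\cdot h .
\]
One option is to take $\kappa^{\imath}$ to be the restriction of the isomorphism $\kappa$ of Lemma \ref{mathtmnd} to the summands indexed by $\Lambda_{m| n;d}^{\imath}$. For $\lambda\in\Lambda_{m| n;d}^{\imath}$ no entry of $f^{\lambda}$ equals $0$, so $M_{f^{\lambda}}\in\mathbb{V}_{\mathfrak{n}}^{\otimes d}$. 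Since the right $\mathbb{H}$-action preserves $\mathbb{V}_{\mathfrak{n}}^{\otimes d}$ (as noted before Theorem \ref{icommu}), the image lies in $\mathbb{V}_{\mathfrak{n}}^{\otimes d}$. Well-definedness and $\mathbb{H}$-linearity are inherited from $\kappa$. Injectivity is also inherited, because $\mathbb{T}_{m| n;d}^{\imath}$ is a direct summand of $\mathbb{T}_{m| n;d}$. Grading is preserved because $|M_{f^\lambda}|=p(\lambda)$.

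It remains to prove surjectivity onto $\mathbb{V}_{\mathfrak{n}}^{\otimes d}$, which I would do by redoing Lemma \ref{votthma} inside $\mathbb{V}_{\mathfrak{n}}^{\otimes d}$. The set $\{f\in\mathbb{Z}^d \mid f_a\not\equiv 0 \ (\mathrm{mod}\ N)\ \forall a\}$ is stable under the right action of $W$. Indeed, $s_0$ negates $f_1$, $s_d$ sends $f_d\mapsto N-f_d$, and the $s_i$ permute coordinates; none of these creates a coordinate $\equiv 0$. A fundamental domain for $W$ on $\mathbb{Z}^d$ is $\{0\le f_1\le\cdots\le f_d\le m+n+1\}$, so this stable set is exactly $\bigsqcup_{\lambda\in\Lambda^{\imath}_{m|n;d}} f^{\lambda}W$. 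Hence $\{M_{f^\lambda g}\mid \lambda\in\Lambda^{\imath}_{m|n;d},\ g\in\mathcal{D}_\lambda\}$ is a basis of $\mathbb{V}_{\mathfrak{n}}^{\otimes d}$.

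The triangularity $M_{f^\lambda}T_g=c_{\lambda,g}M_{f^\lambda g}+\sum_{h<g}c_{\lambda,h}M_{f^\lambda h}$ with $c_{\lambda,g}\neq0$ was established in the proof of Lemma \ref{votthma}. It therefore shows that $\{M_{f^\lambda}T_g\mid \lambda\in\Lambda^{\imath}_{m|n;d},\ g\in\mathcal{D}_\lambda\}$ is a basis of $\mathbb{V}_{\mathfrak{n}}^{\otimes d}$. By Lemma \ref{mamnbb}, these vectors are exactly the images of the basis $x_{\lambda^{(0)}}y_{\lambda^{(1)}}T_g$ of $\mathbb{T}^{\imath}_{m|n;d}$, so $\kappa^{\imath}$ is bijective.

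The main point needing care is the stability of the nonzero-residue set under $W$, together with the consistency of the $T_0$ and $T_d$ actions on these vectors. For $T_0$, the formulas show that $v_{\overline f_1}$ with $\overline f_1\neq0$ maps into the span of $v_{\pm\overline f_1}z^{k}$, which avoids residue $0$. For $T_d$, it is enough to use its expression through $X_d$ and $T_0,\dots,T_{d-1}$. These checks also give a direct verification of the relation $M_{f^\lambda}T_d=-Q_0^{-1}M_{f^\lambda}$ when $\lambda_{m+n+1}>0$; the computation in Lemma \ref{mathtmnd} goes through verbatim, since it never involves $v_0$.
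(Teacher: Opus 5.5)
Your proposal is correct and follows the route the paper intends. The paper omits this proof, saying only that it is a variant of Lemma \ref{mathtmnd}, and your argument supplies it: you restrict $\kappa$ to the summands with $\lambda_0=0$, then combine the $W$-stability of $\{f \mid f_a\not\equiv 0 \pmod N\ \forall a\}$ with the triangularity from Lemma \ref{votthma} to get surjectivity onto $\mathbb{V}_{\mathfrak{n}}^{\otimes d}$. Your last paragraph is not needed, since well-definedness is already inherited from $\kappa$. Surjectivity also follows without triangularity: the span of the $M_f$ having some $f_a\equiv 0$ is $\mathbb{H}$-stable, it contains the image under $\kappa$ of the summands with $\lambda_0>0$, and it is complementary to $\mathbb{V}_{\mathfrak{n}}^{\otimes d}$, so bijectivity of $\kappa$ forces the two inclusions to be equalities.
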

Using $\mathbb{T}_{m| n;d}^{\imath}$, we can define an $\imath$-variant of $\mathbb{S}_{m| n;d}^{\jmath}$, called the affine $\imath$-Schur superalgebra, as follows:
\begin{align*}
\mathbb{S}_{m| n;d}^{\imath} :=\mathrm{End}_{\mathbb{H}}(\mathbb{T}_{m| n;d}^{\imath})=\bigoplus_{\lambda, \mu\in \Lambda_{m| n;d}^{\imath}}\mathrm{Hom}_{\mathbb{H}}(x_{\mu^{(0)}}y_{\mu^{(1)}}\mathbb{H}, x_{\lambda^{(0)}}y_{\lambda^{(1)}}\mathbb{H}),
\end{align*}
whose $\Z_2$-grading is similar to that of $\mathbb{S}_{m| n;d}^{\jmath}$.

By Theorem \ref{icommu} and Lemma \ref{iisom}, we obtain an $\F$-algebra homomorphism $\Psi^{\imath} :\widehat{U}^{\imath}(\mathfrak{sl}_{\mathfrak{n}})\rightarrow \mathbb{S}_{m| n;d}^{\imath}$. Then we have an analog of Prop. \ref{surjve}.
\begin{prop}
Assume that $m+n\geq d$. The affine $\imath$-Schur superalgebra $\mathbb{S}_{m| n;d}^{\imath}$ is generated by $\Psi^{\imath}(B_{i})$ for $i\in \hat{I}_{\mathfrak{n}}$ and $\Psi^{\imath}(\mathrm{k}_{j}^{\pm 1})$ for $1\leq j\leq r$.
\end{prop}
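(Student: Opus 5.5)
I will follow the strategy of Prop.~\ref{surjve}, adapted to $\Lambda_{m|n;d}^{\imath}$. Let $S^{\imath}$ be the subalgebra of $\mathbb{S}_{m|n;d}^{\imath}$ generated by $\Psi^{\imath}(B_i)$ and $\Psi^{\imath}(\mathrm{k}_j^{\pm1})$. Under Lemma~\ref{iisom}, the highest weight vectors are $M_{f^{\lambda}}$ with $\lambda_0=0$, so no entry equals $0$ (mod $N$).

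\textbf{Step 1: idempotents.} I will compute the weights of $\Psi^{\imath}(\mathrm{k}_a)$ on $M_{f^\lambda}$ from \eqref{hafmha}. This gives exponents $b_a(\lambda)$, $1\le a\le r$, linear in $\lambda_1,\ldots,\lambda_{m+n+1}$: of the form $\pm\lambda_a\mp\lambda_{a+1}$ for $a<r$ (with the sign switch at $a=m$), and $\pm\lambda_r \mp 2\lambda_{r+1}$ for $a=r$. I will check that $\lambda\mapsto (b_1(\lambda),\ldots,b_r(\lambda))$ is injective on $\Lambda^{\imath}_{m|n;d}$. The $r$ independent linear forms together with $\sum\lambda_i=d$ should determine the $r+1$ unknowns; invertibility must be checked using $m\neq n$, exactly as in Lemma~\ref{phismn}. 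The Lagrange-interpolation product $P_\lambda$ then gives $\phi^{e}_{\lambda,\lambda}\in S^{\imath}$.

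\textbf{Step 2: Chevalley-type generators.} Next I will establish an analogue of Lemma~\ref{bibmn}: for $1\le i\le r$, $\Psi^{\imath}(B_i)$ is a sum over $\lambda$ of a power of $q$ times $\phi^{e}_{\widetilde f_i(\lambda),\lambda}$. Symmetrically, $\Psi^{\imath}(B_{\tau i})$ gives the $\widetilde e$-type maps, and $B_{r+1}$ (using $\varsigma_{r+1}=q^{-1}Q_0^{-1}Q_1$) gives $\phi^e_{\widetilde e_r(\lambda),\lambda}$ as in the $B_{m+n+1}$ computation. The genuinely new case is $B_0$. By Lemma~\ref{uimahtvacinotha} it acts on $M_{f^\lambda}$ (entries in $[1..r+1]$) through $\varsigma_0 q^{d_1}v_{j-2}+\kappa_0 q^{d_1}v_j$ on entries $\equiv 1$, plus $\kappa_0$ elsewhere. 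Hence $\Psi^{\imath}(B_0)$ is a combination of a diagonal term and the map sending $M_{f^\lambda}$ to $\sum_t c_t M_{f^\lambda-2e_t}$ over the $1$-entries. I expect the latter to equal $\phi^{s_0}_{\lambda,\lambda}$, up to a scalar and a diagonal correction, for $\lambda_1>0$. Its diagonal part is a polynomial in the $\phi^e_{\lambda,\lambda}$, so it lies in $S^{\imath}$ by Step~1. Verifying this identity uses the $T_0$-action with $\kappa_0=(Q_0-Q_1^{-1})/(q-q^{-1})$ and $\varsigma_0=q^{-1}Q_0Q_1^{-1}$. \emph{This is the main obstacle}: the $T_0$ formulas with the $z$-expansions and the $v_{\pm1}$ sign conventions must match the coefficients of $B_0$.

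\textbf{Step 3: regular summand and conclusion.} Since $m+n\ge d$, I will take $\omega=(0,1^d,0^{m+n+1-d})\in\Lambda^{\imath}_{m|n;d}$, so $W_\omega=\{e\}$ and $x_{\omega^{(0)}}y_{\omega^{(1)}}\mathbb{H}=\mathbb{H}$. As in Prop.~\ref{surjve}, chains of $\widetilde e_i,\widetilde f_i$ with $1\le i\le r$ (never leaving $\lambda_0=0$) give $\phi^e_{\lambda,\omega},\phi^e_{\omega,\lambda}\in S^{\imath}$. Products $\phi^e_{\omega,\widetilde e_i(\omega)}\phi^e_{\widetilde e_i(\omega),\omega}$ yield $\phi^{s_i}_{\omega,\omega}$ for $1\le i\le d-1$, and the chain through $\widetilde f_r\cdots\widetilde f_d$ yields $\phi^{s_d}_{\omega,\omega}$. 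For $s_0$, Step~2 applied at $\lambda=\omega$ (where $\lambda_1=1$) gives $\phi^{s_0}_{\omega,\omega}$ directly. Hence $\phi^e_{\omega,\omega}\mathbb{S}^{\imath}\phi^e_{\omega,\omega}\cong\mathbb{H}$, generated by the $\phi^{s_i}_{\omega,\omega}$, lies in $S^{\imath}$. Finally, the decomposition $\mathbb{S}^{\imath}=\bigoplus_{\lambda,\mu}\phi^e_{\lambda,\omega}\mathbb{S}^{\imath}\phi^e_{\omega,\mu}$ gives $S^{\imath}=\mathbb{S}^{\imath}_{m|n;d}$.
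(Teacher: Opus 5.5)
Your proposal is correct and is essentially the adaptation of the proof of Prop.~\ref{surjve} that the paper intends (the paper omits the proof of this proposition), including the one change that is genuinely needed: since $\lambda_0=0$ is forced, $\phi^{s_0}_{\omega,\omega}$ cannot come from $\widetilde{e}_0(\omega)$ and must instead be extracted from $\Psi^{\imath}(B_0)$. Two simplifications: the step you call the main obstacle is immediate at $\lambda=\omega$, because $f^{\omega}=(1,2,\ldots,d)$ gives $\Psi^{\imath}(B_0)M_{f^{\omega}}=q\varsigma_0\,M_{f^{\omega}}\cdot T_0+q\kappa_0M_{f^{\omega}}$ (this uses only $\varsigma_0\neq0$), and the injectivity in Step 1 does not need $m\neq n$, since the homogeneous system reduces to $(m-n-\tfrac12)x=0$.
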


Finally, we have a variant of Theorem \ref{schruimo}, which can be proved similarly.
\begin{thm}\label{suchuri}
Suppose that $m+n\geq d\geq 2$. We have the following isomorphisms:
\begin{align*}
\Psi^{\imath}(\widehat{U}^{\imath}(\mathfrak{sl}_{\mathfrak{n}}))\cong \mathrm{End}_{\mathbb{H}}(\mathbb{V}_{\mathfrak{n}}^{\otimes d}),
\end{align*}
\begin{align*}
\mathrm{End}_{\widehat{U}^{\imath}(\mathfrak{sl}_{\mathfrak{n}})}(\mathbb{V}_{\mathfrak{n}}^{\otimes d})\cong \mathbb{H}^{op}.
\end{align*}
\end{thm}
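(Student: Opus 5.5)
The plan is to mirror the proof of Theorem \ref{schruimo}, with $\mathbb{T}^{\imath}_{m|n;d}$, $\mathbb{S}^{\imath}_{m|n;d}$ and $\Psi^{\imath}$ in place of $\mathbb{T}_{m|n;d}$, $\mathbb{S}^{\jmath}_{m|n;d}$ and $\Psi$. By Theorem \ref{icommu}, $\Psi^{\imath}$ lands in $\mathrm{End}_{\mathbb{H}}(\mathbb{V}_{\mathfrak{n}}^{\otimes d})$. By Lemma \ref{iisom}, this endomorphism algebra is identified with $\mathbb{S}^{\imath}_{m|n;d}$ via the grading-preserving isomorphism $\kappa^{\imath}:\mathbb{T}^{\imath}_{m|n;d}\to\mathbb{V}_{\mathfrak{n}}^{\otimes d}$, which sends $x_{\lambda^{(0)}}y_{\lambda^{(1)}}h\mapsto M_{f^{\lambda}}\cdot h$ for $\lambda\in\Lambda^{\imath}_{m|n;d}$. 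The $\imath$-analogue of Prop. \ref{surjve} stated just before the theorem gives $\Psi^{\imath}(\widehat{U}^{\imath}(\mathfrak{sl}_{\mathfrak{n}}))=\mathbb{S}^{\imath}_{m|n;d}$. Transporting through $\kappa^{\imath}$ then yields the first isomorphism.

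For the second isomorphism, I will exhibit a regular summand. The hypothesis $m+n\geq d$ gives the composition
\[
\omega=(0,\underbrace{1,\ldots,1}_{d},0,\ldots,0)\in\Lambda^{\imath}_{m|n;d}.
\]
Its first entry is $\omega_0=0$, so it genuinely lies in $\Lambda^{\imath}_{m|n;d}$. I will check that $\widetilde{\lambda}_i$ runs through $0,1,\ldots,d$, with the remaining partial sums equal to $d$. Hence every generator $s_0,\ldots,s_d$ of $W$ is removed, and $W_{\omega}=\{e\}$. Then $x_{\omega^{(0)}}y_{\omega^{(1)}}=1$, so $\mathbb{H}_{\mathbb{H}}$ is a direct summand of $\mathbb{T}^{\imath}_{m|n;d}$. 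With $\phi^{e}_{\omega,\omega}$ the projection onto this summand, we get $\mathbb{S}^{\imath}_{m|n;d}\phi^{e}_{\omega,\omega}=\mathrm{Hom}_{\mathbb{H}}(\mathbb{H},\mathbb{T}^{\imath}_{m|n;d})\cong\mathbb{T}^{\imath}_{m|n;d}$ as left $\mathbb{S}^{\imath}$-modules. This gives the chain
\[
\mathrm{End}_{\mathbb{S}^{\imath}_{m|n;d}}(\mathbb{T}^{\imath}_{m|n;d})\cong(\phi^{e}_{\omega,\omega}\mathbb{S}^{\imath}_{m|n;d}\phi^{e}_{\omega,\omega})^{op}\cong\mathrm{End}_{\mathbb{H}}(\mathbb{H}_{\mathbb{H}})^{op}\cong\mathbb{H}^{op}.
\]
Finally, surjectivity of $\Psi^{\imath}$ means that $\widehat{U}^{\imath}(\mathfrak{sl}_{\mathfrak{n}})$-linear and $\kappa^{\imath}\mathbb{S}^{\imath}\kappa^{\imath,-1}$-linear endomorphisms of $\mathbb{V}_{\mathfrak{n}}^{\otimes d}$ coincide. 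Conjugating by $\kappa^{\imath}$ then gives $\mathrm{End}_{\widehat{U}^{\imath}(\mathfrak{sl}_{\mathfrak{n}})}(\mathbb{V}_{\mathfrak{n}}^{\otimes d})\cong\mathbb{H}^{op}$.

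The main obstacle is the input, not the formal double-centralizer step: the surjectivity proposition and Lemma \ref{iisom}. If I had to verify the surjectivity proposition, I would follow the proof of Prop. \ref{surjve}. The first step is the $\imath$-analogue of Lemma \ref{bibmn}, computing $\Psi^{\imath}(B_i)$ as combinations of $\phi^{e}_{\widetilde{f}_i(\lambda),\lambda}$ and $\phi^{e}_{\widetilde{e}_i(\lambda),\lambda}$. The delicate case is $B_0$, where the extra term $\kappa_0K_0^{-1}$ and the specialization $\kappa_0=\frac{Q_0-Q_1^{-1}}{q-q^{-1}}$ must combine with the $T_0$-action on $v_{\pm1}$. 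The goal there is a multiple of $\phi^{e}_{\lambda,\lambda}$ plus a $\phi^{s_0}$-type term. The second step is the analogue of Lemma \ref{phismn}: I need the weights of $\mathrm{k}_1,\ldots,\mathrm{k}_r$ from \eqref{hafmha} to separate the elements of $\Lambda^{\imath}_{m|n;d}$. Here $m\neq n$ should again rule out coincidences. The rest of that proof is identical, with $\phi^{s_0}_{\omega,\omega}$ obtained from $B_0$ and $\phi^{s_d}_{\omega,\omega}$ from $B_{r+1}$.
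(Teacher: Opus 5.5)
Your proposal is correct and is the argument the paper intends, since it says Theorem \ref{suchuri} is proved ``similarly'' to Theorem \ref{schruimo}: surjectivity of $\Psi^{\imath}$ onto $\mathbb{S}^{\imath}_{m|n;d}\cong\mathrm{End}_{\mathbb{H}}(\mathbb{V}_{\mathfrak{n}}^{\otimes d})$ via Lemma \ref{iisom}, then the regular summand $x_{\omega^{(0)}}y_{\omega^{(1)}}\mathbb{H}=\mathbb{H}$ for $\omega\in\Lambda^{\imath}_{m|n;d}$, which gives $\mathrm{End}_{\mathbb{S}^{\imath}_{m|n;d}}(\mathbb{T}^{\imath}_{m|n;d})\cong\mathbb{H}^{op}$. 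Your remarks on the inputs are also sound, in particular that $\phi^{s_0}_{\omega,\omega}$ must now come from $B_0$ because $\widetilde{e}_0(\omega)\notin\Lambda^{\imath}_{m|n;d}$; one small correction is that the $\mathrm{k}$-weights separate $\Lambda^{\imath}_{m|n;d}$ even without assuming $m\neq n$, since recovering $\lambda$ from the weights leads to the coefficient $m-n-\frac{1}{2}$ rather than the $2(m-n)$ of the $\jmath$ case.
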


\begin{rem}
In \cite[\S\S4.1 and 4.3]{FLLLWW20}, the authors introduced two additional coideal subalgebras of quantum affine algebras of type $A$. Analogous constructions can be carried out in the super setting, yielding two further coideal sub-superalgebras of quantum affine superalgebras of type $A$. Since the arguments are similar, we omit the details and leave them to the reader.
\end{rem}


\end{document}